\newif\ifps
\newtheorem{thm}{Theorem}[section]
\newtheorem{cor}[thm]{Corollary}
\newtheorem{lem}[thm]{Lemma}
\newtheorem{prop}[thm]{Proposition}
\newtheorem{defn}[thm]{Definition}
\newtheorem{thma}{Theorem}
\def\epsilon{\varepsilon}
\newcommand{\occult}[1]{}
\newcommand{\foot}[1]{\mbox{}\marginpar{\raggedleft\hspace{0pt}\tiny #1}}
\newcommand{\old}[1]{}
\newcommand{\htop}{h_\mathrm{top}}
\newcommand{\eps}{\varepsilon}
\newcommand{\ph}{\varphi}
\newcommand\RR{{\mathbb R}}
\newcommand\TT{{\mathbb T}}
\newcommand\ZZ{{\mathbb Z}}
\newcommand\NN{{\mathbb N}}
\newcommand{\mm}{\mathbf{m}}
\newcommand{\nn}{\mathbf{n}}
\newcommand{\Lspan}{\Lambda^\mathrm{span}}
\newcommand{\Lsep}{\Lambda^\mathrm{sep}}
\newcommand{\CCC}{\mathcal{C}}
\newcommand{\GGG}{\mathcal{G}}
\newcommand{\JJJ}{\mathcal{J}}
\newcommand{\DDD}{\mathcal{D}}
\newcommand{\SSS}{\mathcal{S}}
\newcommand{\PPP}{\mathcal{P}}
\newcommand{\UUU}{\mathcal{F}}
\newcommand{\VVV}{\mathcal{V}}
\newcommand{\Pexp}{P_\mathrm{exp}^\perp}
\newcommand{\bk}{\bar\kappa}
\newcommand{\phigeo}{\varphi^{\mathrm{geo}}}
\newcommand{\twentyone}{{100}} 
\newcommand{\sixtythree}{{300}} 
\newcommand{\onetwosix}{{600}} 
\newcommand{\fbv}{f_{BV}}
\newcommand{\fB}{f_A}
\DeclareMathOperator{\Diff}{Diff}
\DeclareMathOperator{\Var}{Var}
\DeclareMathOperator{\Id}{Id}
\numberwithin{equation}{section}
\begin{document}

\title[Unique equilibrium states]{Unique equilibrium states for Bonatti--Viana diffeomorphisms}
\author[V.~Climenhaga]{Vaughn Climenhaga}
\author[T.~Fisher]{Todd Fisher}
\author[D.~J.~Thompson]{Daniel J. Thompson}
\address{Department of Mathematics, University of Houston, Houston, TX 77204}
\email{climenha@math.uh.edu}
\address{Department of Mathematics, Brigham Young University, Provo, UT 84602}
\email{tfisher@mathematics.byu.edu}
\address{Department of Mathematics, The Ohio State University, 100 Math Tower, 231 West 18th Avenue, Columbus, Ohio 43210}
\email{thompson@math.osu.edu}

\thanks{V.C.\ is supported by NSF grant DMS-1554794.  T.F.\ is supported by Simons Foundation grant \# 239708. D.T.\ is supported by NSF grant  DMS-$1461163$}

\date{\today}
\commby{}

\begin{abstract}
We show that  the robustly transitive diffeomorphisms constructed by Bonatti and Viana have unique equilibrium states for natural classes of potentials. 
In particular, we characterize the SRB measure as the unique equilibrium state for a suitable geometric potential. The techniques developed are applicable to a wide class of DA diffeomorphisms, and persist under $C^1$ perturbations of the map. These results are an application of general machinery developed by the first and last named authors.
 \end{abstract}
\maketitle

\section{Introduction and statement of results}
\normalem

An \emph{equilibrium state} for a diffeomorphism $f\colon M\to M$ and a potential $\ph \colon M \to \RR$ is an invariant Borel probability measure that maximizes the quantity $h_\mu(f) + \int \ph\,d\mu$.  Results on existence and uniqueness of equilibrium states have a long history \cite{rB75, DKU, Ho2, BS03, BT09, CT1,  PPS, PSZ}, and are one of the main goals in thermodynamic formalism. 
Such results are a powerful tool to understand the orbit structure and global statistical properties of dynamical systems, and often lead to further applications, including large deviations principles, central limit theorems, and knowledge of dynamical zeta functions \cite{PP}.

The benchmark result of this type is that there is a unique equilibrium state $\mu$ when $(M,f)$ is uniformly hyperbolic, mixing, and $\ph$ is H\"older continuous.  
When $\ph$ is the \emph{geometric potential} $\varphi(x) = -\log \mathrm{det}(Df|_{E^u}(x))$, 
this unique equilibrium state is the SRB measure \cite{jS72,rB75,dR76}. Extending this type of result beyond uniform hyperbolicity is a major challenge in the field.  The first and third authors have developed techniques to establish existence and uniqueness of equilibrium states in the presence of non-uniform versions of specification and expansivity \cite{CT4}. 
The purpose of this paper is to show how these results can be applied to higher dimensional smooth systems with weak forms of hyperbolicity, where alternative approaches based on symbolic dynamics or transfer operators appear to meet with fundamental difficulties.  While thermodynamic formalism for one-dimensional systems is well developed, and there have been major recent breakthroughs in dimension two by Buzzi, Crovisier, and Sarig \cite{oS13, BCS}, 
 the higher-dimensional case remains poorly understood.

{

We focus on the class of Bonatti--Viana diffeomorphisms \cite{BV}; these are robustly transitive, derived from Anosov (DA), diffeomorphisms of $\TT^4$.  These are the model examples of robustly transitive DA systems with a dominated splitting that are not partially hyperbolic.  This setting demonstrates the flexibility of our methods while having the advantage of being concrete. To the best of our knowledge, no other techniques for uniqueness are available for the class of Bonatti--Viana diffeomorphisms, or indeed any natural class of smooth systems on $\TT^4$ beyond uniform hyperbolicity.
 
}

 The Bonatti--Viana construction \cite{BV, BF} is a $C^0$ perturbation of a $4$-dimensional toral automorphism $\fB$ with
a hyperbolic splitting $E^s \oplus E^u$, where $\dim E^s = \dim E^u=2$.
 The perturbation 
can be characterized by two parameters $\rho$ and $\lambda$: the parameter  $\rho>0$ is the size of the balls $B(q, \rho) \cup B(q', \rho)$ inside which the perturbation takes place, where $q, q'$ are fixed points; the parameter $\lambda>1$ is the maximum of expansion in the centre-stable and expansion in backwards time in the centre-unstable. 
The construction can be carried out with both $\rho$ and $\log \lambda$ arbitrarily small if required. 

For fixed $\lambda>1$ and $\rho >0$, we write $\fbv \in \UUU_{\lambda, \rho}$ for a diffeomorphism provided by the Bonatti--Viana construction for which these parameters are bounded above by these values of $\lambda$ and $\rho$. It is crucial for our analysis that as a DA system, $f_{BV}$ has a dominated splitting $E^{cs} \oplus E^{cu}$ that is close to the Anosov splitting for $\fB$, and that $f_{BV}$ displays uniformly hyperbolic behaviour outside the neighbourhood on which the perturbation takes place.

Our results give quantitative criteria for existence and uniqueness of the equilibrium state in terms of a function $\Phi$ which depends on the size of the perturbation from the original Anosov map (via the parameters $\rho$ and $\lambda$), the norm and variation of the potential, and the tail entropy of the system. 
The idea is that $\Phi$ gives an upper bound for the pressure of the ``non-hyperbolic'' part of the system, so that for a potential $\ph$ and a diffeomorphism $g\approx f_{BV}$, if $\Phi(\ph; g)$ is smaller than the \emph{topological pressure} $P(\ph; g):= \sup \{h_\mu(g) + \int\ph\,d\mu : \mu$ is $g$-invariant$\}$, then the system ``sees enough uniformly hyperbolic behavior'' to establish uniqueness of the equilibrium state. The uniqueness comes from an application of the general machinery of \cite{CT4}, which is a non-uniform version of a classic proof of Bowen \cite{Bow75}.

\begin{thma}\label{main2}
Let $\fbv \in \UUU_{\lambda, \rho}$ be a diffeomorphism obtained by the Bonatti--Viana construction 
and $g$ be a $C^1$ perturbation of $\fbv$. Let $\varphi\colon \mathbb{T}^4\to \mathbb{R}$ be a H\"older continuous potential function. 
There is a function $\Phi = \Phi (\varphi; g)$, given explicitly at \eqref{def:phi}, such that
\begin{enumerate}[leftmargin=0.7cm, label=\upshape{(\arabic{*})}]
\item
$\lim_{\lambda \to 1, \rho \to 0} \sup \{ \Phi (\varphi; f) : f \in \UUU_{\lambda, \rho} \}= \max\{\varphi(q), \varphi(q')\}< P(\varphi; \fB)$;
\item $\Phi (\varphi; g)$ varies continuously under $C^0$ perturbation of the potential function and $C^1$ perturbation of the map;
\end{enumerate}
and if $\Phi (\varphi; g)  < P(\varphi; g)$, then $(\TT^4,g,\ph)$ has a unique equilibrium state.
\end{thma}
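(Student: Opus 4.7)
The plan is to invoke the general framework of Climenhaga--Thompson \cite{CT4}: a unique equilibrium state exists whenever the space of orbit segments $\TT^4 \times \NN$ admits a decomposition $\PPP \cdot \GGG \cdot \SSS$ where the core collection $\GGG$ satisfies a non-uniform specification property and the Bowen property for $\ph$, and the topological pressure carried by the bad collections, together with tail entropy and Bowen-ball variation terms, is strictly smaller than $P(\ph;g)$. The expression $\Phi(\ph;g)$ in \eqref{def:phi} is by construction a uniform upper bound for this total obstruction, so conclusion (3) reduces to exhibiting such a decomposition and feeding it into \cite{CT4}.

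For (3), I would identify the obstruction to uniform hyperbolicity with the perturbation region $R := B(q,\rho)\cup B(q',\rho)$: outside $R$ the dynamics coincide with the Anosov map $\fB$, so the centre-stable and centre-unstable cones expand or contract at uniform rates. The core $\GGG$ consists of orbit segments $(x,n)$ whose initial and final blocks spend only a small fraction of time in $R$, while $\BBB$ consists of segments beginning or ending with a long heavy-return block. The integrability of $E^{cs}$ and $E^{cu}$ gives a local product structure on $\GGG$, from which a shadowing argument along these invariant manifolds yields non-uniform specification; the Bowen property on $\GGG$ follows from H\"older regularity of $\ph$ combined with uniform contraction of Bowen balls on the good set. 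To bound the pressure of $\BBB$, one observes that a segment in $\BBB$ has Birkhoff average of $\ph$ close to $\max\{\ph(q),\ph(q')\}$ up to an oscillation on $R$ of order $\rho^\alpha$, and then adds the tail-entropy and variation contributions to recover the explicit $\Phi$.

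For (1), taking $\lambda \to 1$ and $\rho \to 0$ collapses $R$ to $\{q,q'\}$, drives the tail-entropy and variation contributions to zero, and shows that $\sup\{\Phi(\ph;f) : f \in \UUU_{\lambda,\rho}\}$ converges to $\max\{\ph(q),\ph(q')\}$. The strict inequality $\max\{\ph(q),\ph(q')\} < P(\ph;\fB)$ holds because the unique equilibrium state for a H\"older potential over the Anosov map $\fB$ is Gibbs with full support and positive entropy, so neither Dirac measure $\delta_q$ nor $\delta_{q'}$ can achieve the variational supremum. For (2), each ingredient of $\Phi$ --- variation and supremum of $\ph$, tail entropy of $g$, geometric size of $R$ --- depends continuously on $\ph$ in $C^0$ and upper-semicontinuously on $g$ in $C^1$; upper semicontinuity suffices when combined with the strict inequality asserted in (1), though the explicit construction in fact yields continuous dependence at the points of interest.

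The hardest step will be verifying the non-uniform specification and Bowen properties on $\GGG$ for a \emph{$C^1$ perturbation} $g$ of $\fbv$, rather than for $\fbv$ itself. Outside $R$, hyperbolicity estimates for $g$ follow from $C^1$ closeness to $\fB$; inside $R$, the local dynamics of $g$ may differ from those of $\fbv$ in subtle ways that threaten the integrability of the centre distributions and the rate at which Bowen balls shrink. Ensuring that all constants controlling the size of $\BBB$ --- and hence the pressure bound producing $\Phi$ --- depend only on quantities that are stable under $C^1$ perturbation is the technical heart of the argument, and is precisely what distinguishes a robust theorem from a merely qualitative one.
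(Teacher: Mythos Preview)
Your outline matches the paper's strategy in all essentials: the decomposition $(\PPP,\GGG,\SSS)$ based on the fraction of time spent near $q,q'$, tail specification for $\GGG^M$ via the local product structure of the integrable centre foliations, the Bowen property from uniform contraction of Bowen balls along good segments, and the pressure estimate on $\PPP\cup\SSS$ via Birkhoff sums concentrated near the fixed points plus a tail-entropy correction. The argument for (1) via the Gibbs property of the Anosov equilibrium state is also exactly what the paper does.

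There is one genuine missing ingredient. The abstract theorem from \cite{CT4} requires, \emph{in addition} to the pressure gap on $\PPP\cup\SSS$, that the pressure of obstructions to expansivity satisfy $\Pexp(\ph,\twentyone\eps) < P(\ph;g)$. Since $g$ need not even be asymptotically $h$-expansive, this is not automatic, and your reference to tail entropy only addresses passing $P(\PPP\cup\SSS,\ph)$ between scales, not bounding the free energy of measures charging the non-expansive set. The paper handles this by establishing a separate expansivity criterion \ref{E}: any point whose forward orbit spends a positive fraction of time away from $q$ \emph{and} whose backward orbit spends a positive fraction of time away from $q'$ has trivial bi-infinite Bowen ball (the proof uses Pliss' lemma to extract hyperbolic times). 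One then shows, via a Katok pressure formula argument, that any ergodic measure giving mass to $\mathrm{NE}(\eps)$ already has free energy bounded by $P(\PPP\cup\SSS,\ph)\leq\Phi$. Without this step the inequality $\Phi<P(\ph;g)$ does not yet verify all hypotheses of the uniqueness theorem.

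A smaller technical point: the relevant neighbourhood is not $B(q,\rho)\cup B(q',\rho)$ but the enlarged $Q=B(q,\rho''+\rho)\cup B(q',\rho''+\rho)$ with $\rho''=\sixtythree\kappa\rho'$, because specification is needed at scale $3\rho'$ and the Bowen property at scale $\sixtythree\rho'$; uniform hyperbolicity estimates along $(x,n)\in\GGG$ at those scales require the orbit to stay correspondingly further from the fixed points. This is why $\sup_Q\ph$, not $\max\{\ph(q),\ph(q')\}$, appears in the explicit formula for $\Phi$.
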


A more precise statement of this result, including the definition of $\Phi$, is given as Theorem \ref{t.BV}. In \S \ref{s:corproof}, by analyzing the function $\Phi$ and the topological pressure $P(\varphi; g)$, we give two corollaries of this result: in Corollary \ref{t.BV-range}, we show that for a fixed diffeomorphism $g$ which is a $C^1$ perturbation of $\fbv$, every H\"older continuous $\varphi$ satisfying a bounded range condition has a unique equilibrium state; in Corollary \ref{cor1.2}, we show that for a fixed H\"older continuous potential then there is a unique equilibrium state with respect to $\fbv$ as long as the parameters $\rho, \log \lambda$ in the Bonatti-Viana construction are sufficiently small.

Consider the potential function $\phigeo(x) =-\log \det(Df|_{E^{cu}(x)})$, where $E^{cu}(x) \subset T_x \TT^4$ is the two-dimensional centre-unstable subspace at $x$.  We refer to this as the \emph{geometric potential}; see for example \cite{rB75, IT10,GS14,C14} for the terminology and for applications of the family $\{t\phigeo : t\in \RR\}$ including multifractal analysis of Lyapunov exponents.

\begin{thma}\label{main3}
Let $\fbv \in \UUU_{\lambda, \rho}$ with $\log \lambda, \rho$ sufficiently small. Then for every $C^2$ diffeomorphism $g$ which is a sufficiently small $C^1$ perturbation of $\fbv$, the following are true.
\begin{itemize}
\item $t=1$ is the unique root of the function $t\mapsto P(t\phigeo_g; g)$.
\item There is an $\eps>0$ such that $t\phigeo_g$ has a unique equilibrium state $\mu_t$ for each $t\in (-\eps,1+\eps)$.
\item $\mu_1$ is the unique SRB measure for $g$.
\end{itemize}
\end{thma}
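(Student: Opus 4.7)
The plan is to prove the three items in the order (2), (3), (1), since each step builds on the previous. The main engine is Theorem~\ref{main2} applied to the one-parameter family $t\phigeo_g$ of potentials, combined with the thermodynamic characterization of SRB measures via Pesin's entropy formula.

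For item~(2), the work in Section~\ref{s.geo} is to show that $\phigeo_g$ is H\"older continuous for $g\in C^{1+\alpha}$ and to verify the key inequality $\Phi(\phigeo_g;g)<P(\phigeo_g;g)$. The latter is obtained from the explicit bound in Theorem~\ref{main2}(1): the limiting value $\max\{\phigeo_g(q),\phigeo_g(q')\}$ as $\lambda\to 1$, $\rho\to 0$ is strictly negative, since $\log J^{cu}_g(q)$ and $\log J^{cu}_g(q')$ are close to the unstable log-Jacobian of $\fB$ at the two fixed points and hence strictly positive, while $P(\phigeo_g;g)$ is close to $P(\phigeo;\fB)=0$. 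Continuity in the potential (Theorem~\ref{main2}(2)) extends this strict inequality from $t=1$ to an open interval $(-\eps,1+\eps)$, and Theorem~\ref{main2}(3) then produces the unique equilibrium state $\mu_t$ on this interval.

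For item~(3), the equilibrium state $\mu_1$ satisfies $h_{\mu_1}(g)+\int\phigeo_g\,d\mu_1=P(\phigeo_g;g)$. In the BV setting (with $\lambda$ close to $1$), Ruelle's inequality refines to $h_\mu(g)\le\int\log J^{cu}_g\,d\mu$ for every invariant measure $\mu$, since the dominated splitting with small $\log\lambda$ forces positive Lyapunov exponents into $E^{cu}$. Applied to $\mu_1$ this gives $P(\phigeo_g;g)\le 0$; combined with a matching lower bound $P(\phigeo_g;g)\ge 0$ coming from existence of an SRB measure (either by Alves--Bonatti--Viana for volume-expanding $E^{cu}$, or directly from Pesin theory), one obtains $P(\phigeo_g;g)=0$ and equality in Ruelle for $\mu_1$, i.e., Pesin's entropy formula holds. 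By the Ledrappier--Young characterization, $\mu_1$ is SRB. Conversely, any SRB measure satisfies Pesin's formula and is therefore an equilibrium state for $\phigeo_g$, hence equals $\mu_1$ by the uniqueness from item~(2).

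For item~(1), set $F(t):=P(t\phigeo_g;g)$; by item~(3), $F(1)=0$, and since $\mu_1$ is a positive-entropy SRB measure, $\int\phigeo_g\,d\mu_1=-h_{\mu_1}<0$. For $t<1$ the variational principle gives $F(t)\ge h_{\mu_1}+t\int\phigeo_g\,d\mu_1=h_{\mu_1}(1-t)>0$. For $t>1$ and any invariant $\mu$,
\begin{equation*}
h_\mu+t\textstyle\int\phigeo_g\,d\mu=\bigl(h_\mu+\int\phigeo_g\,d\mu\bigr)+(t-1)\textstyle\int\phigeo_g\,d\mu\le -(t-1)c<0,
\end{equation*}
where $c:=\min_{x\in\TT^4}\log J^{cu}_g(x)>0$ is the uniform lower bound on the volume expansion in $E^{cu}$ built into the BV construction for small $\lambda,\rho$. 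Taking the supremum gives $F(t)\le-(t-1)c<0$, so $t=1$ is the unique root. The principal technical obstacle in the plan is the verification of $\Phi(\phigeo_g;g)<P(\phigeo_g;g)$ in Section~\ref{s.geo}, which requires explicit control on every term in the definition of $\Phi$ (H\"older norms, tail entropy, and $C^0$ size of the perturbation) and a careful analysis of the geometric potential near the perturbation centers $q,q'$ in order to match it against the explicit limit in Theorem~\ref{main2}(1).
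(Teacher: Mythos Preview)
Your overall architecture is sound, and item~(1) is essentially the paper's argument (the paper just notes that $\sup\phigeo<0$ makes $t\mapsto P(t\phigeo)$ strictly decreasing). However, there are two genuine gaps.

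\textbf{Item~(3): the refined Ruelle inequality is false.} You assert that for small $\log\lambda$ the dominated splitting forces all positive Lyapunov exponents into $E^{cu}$, so that $h_\mu(g)\le\int\log J^{cu}_g\,d\mu$ for every invariant $\mu$. This is exactly what fails for Bonatti--Viana maps. The perturbation at $q$ changes the stable index of $q$ from $2$ to $1$: there is now a direction inside $E^{cs}$ that is \emph{expanding} near $q$. Thus invariant measures supported near $q$ can have $\lambda_2(\mu)>0$, and for such $\mu$ one has $\lambda^+(\mu)>\lambda_3(\mu)+\lambda_4(\mu)=-\int\phigeo\,d\mu$, so the inequality you want goes the wrong way. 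Similarly, near $q'$ one can have $\lambda_3(\mu)\le 0$. The paper's proof confronts this directly: it defines $\mathcal{M}_*$ as the set of ergodic hyperbolic measures with exactly two positive exponents, and proves (Lemma~\ref{lem:keySRBestimate2}) that any $\mu\notin\mathcal{M}_*$ satisfies $h_\mu+\int\phigeo\,d\mu\le\Phi(\phigeo;g)<0$. The mechanism is that a positive exponent in $E^{cs}$ forces the orbit to linger near $q$ (so $\mu(A^+)=1$), and then the pressure estimate on $\CCC$ from Theorem~\ref{coreestimateallscales} and Lemma~\ref{keystepexpansivityestimate} bounds the free energy. Only on $\mathcal{M}_*$ does $-\int\phigeo\,d\mu=\lambda^+(\mu)$ hold, and there Margulis--Ruelle gives $h_\mu+\int\phigeo\,d\mu\le 0$. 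This splitting into $\mathcal{M}_*$ and its complement is the key idea you are missing.

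\textbf{Item~(2): continuity does not reach $t=0$.} You verify $\Phi(\phigeo;g)<P(\phigeo;g)$ at $t=1$ and then invoke continuity to extend to an open interval. But that only gives a neighbourhood of $t=1$; the theorem asserts uniqueness on an interval $(-\eps,1+\eps)$, which must contain all of $[0,1]$. The paper (\S\ref{pressuregaptphigeo}) gives explicit linear upper and lower bounds for $\Phi(t\phigeo;g)$ and $P(t\phigeo;g)$ respectively, valid on all of $[0,1]$, and checks the gap directly on the whole interval before extending slightly past the endpoints. You need an argument that covers the entire segment, not just a germ at $t=1$.

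A minor remark: the paper obtains $P(\phigeo;g)\ge 0$ by a self-contained leaf-volume argument (Lemma~\ref{lem:Pgeq0}) rather than by importing existence of an SRB measure; your route via \cite{BV} is legitimate but less self-contained.
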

Our results are proved using general machinery developed by the first and last named authors~\cite{CT4}. The idea is to find a `good' collection of orbit segments on which the map has uniform expansion, contraction, and mixing properties, and demonstrating that this collection is large in the sense that any orbit segment can be \emph{decomposed} into `good' and `bad' parts in such a way that the collection of `bad' orbit segments has smaller topological pressure than the entire system.  

The diffeomorphisms we consider are not expansive (see \S\ref{sec:tail-entropy} for definitions of expansivity and related concepts).
In fact, a $C^1$ perturbation of a diffeomorphism $\fbv$ may not even be asymptotically h-expansive, and thus may have positive tail entropy \cite{BF}. We handle this by showing that any measure with large enough free energy is \emph{almost expansive} (Definition \ref{almostexpansive}), so the failure of expansivity does not affect equilibrium states. A significant technical point in our approach is that we carry out all our estimates coarsely at a definite fixed scale which is too large to `see' the bad dynamics that may occur at small scales or infinitesimally. We control the smaller scales indirectly using bounds on the tail entropy. The need to be precise and careful about scales leads to substantial technicalities in our arguments.

We now review the relevant results in the literature. For systems with a dominated splitting, there are some known results on uniqueness of the measure of maximal entropy
although these mostly require partial hyperbolicity \cite{Berg, UR, RHRHTU}, and the case of equilibrium states for $\ph\neq 0$ have been largely unexplored. 
For the Bonatti--Viana examples,  the existence of a unique MME was obtained in~\cite{BF}, {using a technique that is not suited to generalization to equilibrium states.} 
Existence of equilibrium states for partially hyperbolic horseshoes was studied by Leplaideur, Oliveira, and Rios \cite{LOR}, but they do not deal with uniqueness. {Results for uniqueness of equilibrium states for frame flows have been obtained recently by Spatzier and Visscher \cite{SV}.} Other references which apply in higher dimensional settings include \cite{BS03, PSZ}. In particular, Pesin, Senti and Zhang  \cite{PSZ} have used tower techniques to develop thermodynamic formalism for the Katok map, which is a non-uniformly hyperbolic DA map of the $2$-torus.

The theory of SRB measures has received much more attention. The fact that there is a unique SRB measure for the examples we study follows from~\cite{BV}. 
The connection between SRB measures and equilibrium states is given by the Ledrappier--Young formula and the Margulis--Ruelle inequality. 
However, even when there is known to be a unique SRB measure, the characterization of the SRB as an equilibrium state of a continuous potential function requires a non-trivial proof because the number of positive Lyapunov exponents can be different for different measures.  For diffeomorphisms with a dominated splitting, Carvalho \cite{mC93} has showed that the SRB measure for a DA system obtained along an arc of $C^\infty$ diffeomorphisms is an equilibrium state. Along a $C^r$ arc, her result only applied at first bifurcation. To the best of our knowledge, our results in \S\ref{s.srb} are the first that characterize the SRB measure as a \emph{unique} equilibrium state for a class of diffeomorphisms with a dominated splitting beyond uniform hyperbolicity.

The techniques introduced in this paper are robust and apply for other DA systems. In \cite{CFT_Mane}, we use the tools and ideas introduced in this paper to study  the partially hyperbolic Ma\~n\'e family of diffeomorphisms \cite{man78}. This family is significantly easier to study than the Bonatti--Viana family, and we are able to derive stronger results in that setting. Almost Anosov diffeomorphisms, Katok maps, and the Shub class of robustly transitive diffeomorphisms~\cite{HPS} are other classes of DA systems where these techniques can be explored.

For uniformly hyperbolic systems,
unique equilibrium states associated to H\"older potentials are known to have strong statistical properties, such as Bernoullicity, central limit theorem, and exponential decay of correlations.  The proofs of these properties use Markov partitions and quasi-compactness of the transfer operator, while our proofs are based on extending Bowen's approach via specification.  Ledrappier has established the K property for systems with uniform specification \cite{fL77} and statistical poperties for symbolic systems have been explored in \cite{C15}. It is an open problem whether specification-based approaches can be used to establish these stronger statistical properties in general. We expect that the unique equilibrium states produced here will have these stronger statistical properties, although this remains to be proved.

In \S\ref{s.back}, we give the necessary background material  from \cite{CT4} on thermodynamic formalism.  In \S\ref{s.perturb}, we prove general pressure estimates for $C^0$-perturbations of Anosov systems.
In \S\ref{sec:BV}, we provide details of the Bonatti--Viana construction, and state a more precise version of Theorem \ref{main2}. In \S\ref{s:corproof}, we prove corollaries of Theorem \ref{main2}.  In \S\ref{s:mainproof}, we prove our main theorem. 
In \S\ref{s.srb}, we prove Theorem \ref{main3} on SRB measures. In \S \ref{s.lemmas}, we provide proofs for a few technical lemmas. In an Appendix, we give a direct proof of the required regularity for the geometric potential.

\section{Background}\label{s.back}

In this section, we state definitions and results that we will need throughout the paper.  


\subsection{Pressure} 
Let $X$ be a compact metric space and $f\colon X\to X$ be a continuous map.  Henceforth, we will identify $X\times \mathbb{N}$ with the space of finite orbit segments for a map $f$ via the correspondence
\begin{equation}\label{eqn:segments}
(x,n) \quad\longleftrightarrow\quad (x,f(x),\dots,f^{n-1}(x)).
\end{equation}
For a continuous potential function $\varphi\colon X\to \mathbb{R}$ we write 
\[
S_n\varphi(x) = S_n^f \varphi(x) = \sum_{k=0}^{n-1} \varphi(f^kx)
\]
for the ergodic sum along an orbit segment, and given $\eta>0$, we write 
\[
\Var(\ph,\eta) = \sup\{|\ph(x)-\ph(y)| \,:\, x,y\in X, d(x,y) < \eta\}.
\]
Given $n\in \NN$ and $x,y\in X$, we write
\[
d_n(x,y) = \max \{ d(f^kx,f^ky) \,:\, 0\leq k < n\}.
\]
Given $x\in X$, $\eps>0$, and $n\in \NN$, the \emph{Bowen ball of order $n$ with center $x$ and radius $\eps$} is
\[
B_n(x,\eps) = \{y\in X \, :\,  d_n(x,y) < \eps\}.
\]
We say that $E\subset X$ is $(n,\eps)$-separated if $d_n(x,y) \geq \eps$ for all $x,y\in E$.

We will need to consider the \emph{pressure of a collection of orbit segments}.  More precisely, we interpret $\mathcal{D}\subset X\times \mathbb{N}$ as a collection of finite orbit segments, and write $\mathcal{D}_n = \{x\in X \, :\,  (x,n)\in \mathcal{D}\}$ for the set of initial points of orbits of length $n$ in $\mathcal{D}$.  Then we consider the partition sum
$$
\Lambda^{\mathrm{sep}}_n(\DDD,\ph,\epsilon; f) =\sup
\left\{ \sum_{x\in E} e^{S_n\ph(x)} \, :\,  E\subset \mathcal{D}_n \text{ is $(n,\epsilon)$-separated} \right\}.
$$

When there is no confusion in the map we will sometimes omit the dependence on $f$ from our notation.  
We will also sometimes require a partition sum $\Lspan_n$ defined with $(n,\eps)$-spanning sets. Given $Y\subset X$, $n\in \NN$, and $\delta>0$, we say that $E\subset Y$ is an \emph{$(n,\delta)$-spanning set for $Y$} if $\bigcup_{x\in E} \overline{ B_n(x,\delta)} \supset Y$.  Write
\[
\Lspan_n(\DDD,\ph,\delta;f) =\inf \left\{ \sum_{x\in E} e^{S_n \ph(x)} \, :\,  E\subset \DDD_n \text{ is $(n,\delta)$-spanning}\right\}.
\]

We will use the following basic result relating $\Lambda^{\mathrm{sep}}_n$ and $\Lspan_n$, which is proved in \S\ref{s.lemmas}.
\begin{lem}\label{lem:span-sep}
For any $\DDD \subset X\times \NN$, $\ph\colon X\to \RR$, and $\delta>0$, we have
\begin{align*}
\Lspan_n(\DDD,\ph,\delta) &\leq \Lambda^{\mathrm{sep}}_n(\DDD,\ph,\delta), \\
\Lambda^{\mathrm{sep}}_n(\DDD,\ph,2\delta) &\leq e^{n \Var(\ph,\delta)} \Lspan_n(\DDD,\ph,\delta).
\end{align*}
\end{lem}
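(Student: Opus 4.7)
The plan is to prove the two inequalities by standard separated/spanning set manipulations; the main subtlety is the injection argument for the second inequality.

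For the first inequality, I would start with a maximal $(n,\delta)$-separated subset $E \subset \DDD_n$ achieving (or nearly achieving) the supremum defining $\Lambda^{\mathrm{sep}}_n(\DDD,\ph,\delta)$. Maximality forces $E$ to be $(n,\delta)$-spanning for $\DDD_n$: otherwise there would be a point $y\in\DDD_n$ with $d_n(y,x)>\delta$ for every $x\in E$, and $E\cup\{y\}$ would still be $(n,\delta)$-separated, contradicting maximality. Since $E\subset\DDD_n$ is itself a candidate in the infimum defining $\Lspan_n$, we conclude $\Lspan_n(\DDD,\ph,\delta)\le\sum_{x\in E}e^{S_n\ph(x)}\le\Lambda^{\mathrm{sep}}_n(\DDD,\ph,\delta)$.

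For the second inequality, fix an $(n,2\delta)$-separated subset $E\subset\DDD_n$ and an $(n,\delta)$-spanning set $F\subset\DDD_n$. For each $x\in E$ choose $\pi(x)\in F$ with $x\in\overline{B_n(\pi(x),\delta)}$, i.e.\ $d_n(x,\pi(x))\le\delta$. The key observation is that $\pi$ is injective: if $\pi(x)=\pi(x')=y$ for distinct $x,x'\in E$, the triangle inequality gives $d_n(x,x')\le 2\delta$, contradicting the $(n,2\delta)$-separation of $E$. Since $d(f^kx,f^k\pi(x))\le\delta$ for $0\le k<n$, I would use the variation bound $|S_n\ph(x)-S_n\ph(\pi(x))|\le n\Var(\ph,\delta)$ (with a standard $\eps$-fudge if one insists on the strict inequality in the definition of $\Var$, since $\Var(\ph,\delta+\eps)\to\Var(\ph,\delta)$ suffices after passing to a limit; alternatively one can absorb this into the choice of $\delta$). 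Summing,
\[
\sum_{x\in E}e^{S_n\ph(x)}\le e^{n\Var(\ph,\delta)}\sum_{x\in E}e^{S_n\ph(\pi(x))}\le e^{n\Var(\ph,\delta)}\sum_{y\in F}e^{S_n\ph(y)},
\]
where the last step uses injectivity of $\pi$ to bound the sum over $\pi(E)\subset F$ by the sum over $F$. Taking the infimum over spanning sets $F$ and the supremum over separated sets $E$ yields the desired inequality.

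The only potential obstacle is the technicality about strict versus non-strict inequality in the definition of $\Var(\ph,\delta)$, but this is handled by replacing $\delta$ with $\delta+\eps$ and letting $\eps\to 0$ (or simply noting that, for the application to pressure estimates, the $\eps$ loss is irrelevant). No deeper issues arise.
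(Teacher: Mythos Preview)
Your proof is correct and follows essentially the same approach as the paper: for the first inequality, both you and the paper observe that a maximal $(n,\delta)$-separated set is $(n,\delta)$-spanning (your parenthetical about ``achieving the supremum'' is unnecessary but harmless, since you only use that $E$ is separated and spanning); for the second inequality, both arguments use the same injection $\pi\colon E\to F$ and the same estimate. You are in fact slightly more careful than the paper in flagging the strict-versus-nonstrict technicality in the definition of $\Var(\ph,\delta)$, which the paper glosses over.
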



The \emph{pressure of $\ph$ on $\DDD$ at scale $\eps$} is 
$$
P(\mathcal{D},\ph,\epsilon; f) = \varlimsup_{n\to\infty} \frac 1n \log \Lambda^{\mathrm{sep}}_n(\mathcal{D},\ph,\epsilon),
$$
and the \emph{pressure of $\varphi$ on $\mathcal{D}$}
$$
P(\mathcal{D},\ph; f) = \lim_{\epsilon\to 0}P(\mathcal{D},\ph,\epsilon;f).
$$
The above definition 
is a non-stationary version of the usual upper capacity pressure \cite{Pesin}.  For a set $Z \subset X$, we let $P(Z, \varphi, \epsilon; f) := P(Z \times \NN, \varphi, \epsilon; f)$, and thus $P(Z, \varphi; f)$ is the usual upper capacity pressure.

When $\ph=0$ the above definition gives the \emph{entropy of $\mathcal{D}$}:
\begin{equation}\label{eqn:h}
\begin{aligned}
h(\mathcal{D}, \epsilon; f)= h(\mathcal{D}, \epsilon) &:= P(\mathcal{D}, 0, \epsilon) \mbox{ and } h(\mathcal{D})= \lim_{\epsilon\rightarrow 0} h(\mathcal{D}, \epsilon).
\end{aligned}
\end{equation}

We let $\mathcal{M}(f)$ denote the set of $f$-invariant Borel probability measures and $\mathcal{M}_e(f)$ the set of ergodic $f$-invariant Borel probability measures.
The variational principal for pressure \cite[Theorem 9.10]{Wal} states that if $X$ is a compact metric space and $f$ is continuous, then 
\[
P(\varphi; f)=\sup_{\mu\in \mathcal{M}(f)}\left\{ h_{\mu}(f) +\int \varphi \,d\mu\right\}
=\sup_{\mu\in \mathcal{M}_e(f)}\left\{ h_{\mu}(f) +\int \varphi \,d\mu\right\}.
\]
A measure achieving the supremum is an \emph{equilibrium state}, and these are the objects whose existence and uniqueness we wish to study.

\subsection{Expansivity and tail entropy}\label{sec:tail-entropy}

Given a homeomorphism $f\colon X\to X$ and $\eps>0$, consider for each $x\in X$ and $\eps>0$ the set
\[
\Gamma_\eps(x) := \{y\in X \, :\,  d(f^kx,f^ky) < \eps \text{ for all } n\in \ZZ \}
\]
is the \emph{(bi-infinite) Bowen ball of $x$ of size $\epsilon$}.
Note that $f$ is expansive if and only if there exists $\eps>0$ so that $\Gamma_\eps(x)= \{x\}$ for all $x\in X$.  

For systems that fail to be expansive, it is useful to consider the \emph{tail entropy} of $f$ at scale $\eps>0$ is
\begin{equation}\label{eqn:Bowen-tail-entropy}
h_f^*(\epsilon) = \sup_{x\in X} \lim_{\delta\to 0} \limsup_{n\to\infty}
\frac 1n \log \Lspan_n(\Gamma_\eps(x)\times \mathbb{N},0,\delta;f).
\end{equation}
This quantity was introduced in \cite{rB72}; equivalent definitions can also be formulated using open covers \cite{mM76}.

The map $f$ is entropy-expansive if $h_f^*(\eps)=0$ for some $\eps>0$, and is asymptotically $h$-expansive if $h_f^*(\eps)\to 0$ as $\eps\to 0$. See \cite{BD, dB09} for connections between these notions and the theory of symbolic extensions. An interesting result of \cite{BD} is that positive tail entropy rules out the existence of a principal symbolic extension, and thus symbolic dynamics fails in a strong way for such systems.

Given a collection $\DDD\subset X\times \NN$ and scales $0<\delta<\eps$, the tail entropy allows us to control
pressure at scale $\delta$ in terms of pressure at scale $\eps$. 
The following is proved in \S\ref{s.lemmas}.
  
\begin{lem} \label{smallerscales}
Given any $\DDD\subset X\times \NN$ and $0<\delta<\eps$, we have
\[
P(\DDD,\ph,\delta;f) \leq P(\DDD,\ph,\epsilon; f) + h_f^\ast(\epsilon) + \Var(\ph,\epsilon) + \Var(\ph,\delta).
\]
In particular, $P(\DDD,\ph; f) \leq P(\DDD,\ph,\epsilon; f) + h_f^\ast(\epsilon) + \Var(\ph,\epsilon)$.
\end{lem}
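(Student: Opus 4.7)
The plan is to bound the partition sum at scale $\delta$ by the partition sum at scale $\eps$ times a factor $e^{n(h_f^*(\eps) + \Var(\ph,\eps) + \Var(\ph,\delta) + o(1))}$, then take $\frac{1}{n}\log$ and let $n\to\infty$. The second statement follows from the first by letting $\delta\to 0$ and using continuity of $\ph$ (so $\Var(\ph,\delta)\to 0$).

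First, I would use Lemma \ref{lem:span-sep} to replace the separated partition sum on the left by a spanning partition sum at a smaller scale:
\[
\Lambda^{\mathrm{sep}}_n(\DDD,\ph,\delta) \leq e^{n\Var(\ph,\delta/2)} \Lambda^{\mathrm{span}}_n(\DDD,\ph,\delta/2).
\]
Next I would refine a coarse-scale spanning set. Choose a near-optimal $(n,\eps)$-spanning set $F\subset \DDD_n$, so that $\sum_{y\in F} e^{S_n\ph(y)} \leq 2\Lambda^{\mathrm{span}}_n(\DDD,\ph,\eps)$. For each $y\in F$, pick an $(n,\delta/4)$-spanning set $G_y$ of the Bowen ball $\overline{B_n(y,\eps)}$, and for each $z\in G_y$ pick (if one exists) a nearby point $\tilde z\in \DDD_n\cap \overline{B_n(z,\delta/4)}$. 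The union $\tilde G$ of these $\tilde z$ is contained in $\DDD_n$ and is $(n,\delta/2)$-spanning for $\DDD_n$ by the triangle inequality. Because $d_n(\tilde z,y)\leq \eps+\delta/4$, we can split the variation estimate as
\[
|S_n\ph(\tilde z) - S_n\ph(y)| \leq n\Var(\ph,\eps) + n\Var(\ph,\delta),
\]
which yields
\[
\Lambda^{\mathrm{span}}_n(\DDD,\ph,\delta/2) \leq 2 e^{n(\Var(\ph,\eps)+\Var(\ph,\delta))} \Bigl(\sup_{y\in X} \#G_y\Bigr) \Lambda^{\mathrm{span}}_n(\DDD,\ph,\eps).
\]

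The crux is to control $\sup_{y} \#G_y$ using tail entropy. For each $y$, the set $G_y$ is an $(n,\delta/4)$-spanning set of $\overline{B_n(y,\eps)}$, so its cardinality is bounded by $\Lambda^{\mathrm{span}}_n(\overline{B_n(y,\eps)},0,\delta/4;f)$. I would prove by a compactness-and-contradiction argument that, for every $\eta>0$ and every $\delta'>0$ sufficiently small, there is $N$ such that
\[
\Lambda^{\mathrm{span}}_n(\overline{B_n(y,\eps)},0,\delta';f) \leq e^{n(h_f^*(\eps)+\eta)} \qquad \text{for all } n\geq N \text{ and all } y\in X.
\]
If this failed, sequences $n_k\to\infty$, $\delta_k\to 0$, $y_k\to y^*$ would produce $(n_k,\delta_k)$-separated subsets of $B_{n_k}(y_k,\eps)$ whose exponential growth rate exceeds $h_f^*(\eps)$; using a diagonal subsequence and the fact that any accumulation point of such a separated set lies in $\Gamma_\eps(y^*)$ would contradict the definition of $h_f^*(\eps)$. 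This uniform-in-$y$ step is the main obstacle, because the tail entropy is defined via bi-infinite Bowen balls $\Gamma_\eps(x)$ rather than the forward Bowen balls $B_n(y,\eps)$, so passing between them requires the compactness argument above.

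Combining the three steps and taking $\frac{1}{n}\log$ then $\limsup_{n\to\infty}$, the $2$ and the $o(1)$ terms disappear, $\Var(\ph,\delta/2)\leq \Var(\ph,\delta)$, and we obtain
\[
P(\DDD,\ph,\delta;f) \leq P(\DDD,\ph,\eps;f) + h_f^*(\eps) + \Var(\ph,\eps) + \Var(\ph,\delta),
\]
using once more that $\Lambda^{\mathrm{span}}_n \leq \Lambda^{\mathrm{sep}}_n$ to rewrite $\Lambda^{\mathrm{span}}_n(\DDD,\ph,\eps)$ as $\Lambda^{\mathrm{sep}}_n(\DDD,\ph,\eps)$ for the pressure on the right. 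The displayed "in particular" statement then follows by sending $\delta\to 0$.
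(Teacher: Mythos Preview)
Your overall strategy is essentially the same as the paper's: pass from separated to spanning via Lemma~\ref{lem:span-sep}, refine an $(n,\eps)$-scale cover of $\DDD_n$ by $(n,\delta)$-spanning sets of each Bowen ball $B_n(x,\eps)$, and bound the cardinality of those refining sets by $Ke^{n(h_f^*(\eps)+\eta)}$ uniformly in $x$ and $n$. The paper does exactly this, using a maximal $(n,\eps)$-separated set $E_n\subset\DDD_n$ (rather than a near-optimal spanning set), and then simply \emph{cites} Bowen \cite[Proposition~2.2]{rB72} for the uniform bound
\[
\Lspan_n(B_n(x,\eps),0,\delta;f)\leq K e^{\alpha n}\qquad\text{for all }x\in X,\ n\in\NN,\ \alpha>h_f^*(\eps).
\]

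The gap in your write-up is precisely at this ``crux'' step. Your compactness-and-contradiction sketch does not work as stated, for two reasons. First, if $z_k\in B_{n_k}(y_k,\eps)$ with $n_k\to\infty$ and $y_k\to y^*$, then an accumulation point $z^*$ satisfies $d(f^j z^*,f^j y^*)\leq\eps$ only for $j\geq 0$; you have no control on negative iterates, so $z^*$ lies in the one-sided infinite Bowen ball, not in $\Gamma_\eps(y^*)$. Second, and more seriously, even granting that limits land in the right set, passing to accumulation points destroys $(n_k,\delta_k)$-separation: many separated points can converge to the same limit, so you cannot manufacture large $(n,\delta)$-separated subsets of $\Gamma_\eps(y^*)$ this way. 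Bowen's actual proof of Proposition~2.2 is not a limit-point argument; it uses a submultiplicativity/pigeonhole step along a long orbit segment to locate a single point whose infinite Bowen ball carries the excess growth. You should either invoke Bowen's result directly, as the paper does, or reproduce that pigeonhole argument rather than the accumulation sketch.

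One minor point in your favor: your care in replacing each $z\in G_y$ by a nearby $\tilde z\in\DDD_n$ is needed for $\Lspan_n(\DDD,\ph,\cdot)$ as defined (the spanning set must lie in $\DDD_n$); the paper glosses over this.
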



\subsection{Obstructions to expansivity, specification, and regularity}

It was shown by Bowen \cite{Bow75} that $(X,f,\ph)$ has a unique equilibrium state whenever $(X,f)$ has expansivity and specification, and $\ph$ is sufficiently regular.  We require the results from \cite{CT4}, which give existence and uniqueness as long as  `obstructions to specification and regularity' and `obstructions to expansivity' have smaller pressure than the whole system. We recall the necessary definitions, which can be found in \cite{CT4}.



\subsubsection{Expansivity}

We introduce the following quantity associated with the set of  non-expansive points. 
\begin{defn} \label{almostexpansive}
For $f\colon X\rightarrow X$ the set of non-expansive points at scale $\epsilon$ is  $\mathrm{NE}(\epsilon):=\{ x\in X \, :\,  \Gamma_\eps(x)\neq \{x\}\}$.  An $f$-invariant measure $\mu$ is  almost expansive at scale $\epsilon$ if $\mu(\mathrm{NE}(\epsilon))=0$.  Given a potential $\varphi$, the pressure of obstructions to expansivity at scale $\epsilon$ is
\begin{align*}
\Pexp(\varphi, \epsilon) &=\sup_{\mu\in \mathcal{M}_e(f)}\left\{ h_{\mu}(f) + \int \varphi\, d\mu\, :\, \mu(\mathrm{NE}(\epsilon))>0\right\} \\
&=\sup_{\mu\in \mathcal{M}_e(f)}\left\{ h_{\mu}(f) + \int \varphi\, d\mu\, :\, \mu(\mathrm{NE}(\epsilon))=1\right\}.
\end{align*}
This is monotonic in $\eps$, so we can define a scale-free quantity by
\[
\Pexp(\varphi) = \lim_{\epsilon \to 0} \Pexp(\varphi, \epsilon).
\]
\end{defn}


\subsubsection{Specification}

We define specification for a collection of orbit segments.
\begin{defn} 
A collection of orbit segments $\mathcal{G}\subset X\times \mathbb{N}$ has \emph{specification at scale $\epsilon$} if there exists $\tau\in\mathbb{N}$ such that for every $\{(x_j, n_j)\, :\, 1\leq j\leq k\}\subset \mathcal{G}$,
there is a point $x$ in
$$\bigcap_{j=1}^k f^{-(m_{j-1}+ \tau)}B_{n_j}(x_j, \epsilon),$$
where $m_{0}=-\tau$ and $m_j = \left(\sum_{i=1}^{j} n_i\right) +(j-1)\tau$ for each $j \geq 1$.
\end{defn}

The above definition says that there is some point $x$ whose trajectory shadows each of the $(x_i,n_i)$ in turn, taking a transition time of exactly $\tau$ iterates between each one.  The numbers $m_j$ for $j\geq 1$ are the time taken for $x$ to shadow $(x_1, n_1)$ up to $(x_j, n_j)$.

It is sometimes convenient to consider collections $\GGG$ in which only long orbit segments have specification.
\begin{defn}
A collection of orbit segments $\mathcal{G}\subset X\times \mathbb{N}$ has \emph{tail specification at scale $\epsilon$} if there exists $N_0\in \NN$ so that the collection $\GGG_{\geq N_0} :=\{(x,n) \in \GGG \mid n \geq N_0\}$ has specification at scale $\epsilon$.
\end{defn}


\subsubsection{Regularity}

We require the following regularity condition for the potential $\ph$ on the collection $\GGG$. 

\begin{defn} \label{Bowen}
Given $\mathcal{G}\subset X\times \mathbb{N}$, a potential $\varphi$  has the \emph{Bowen property on $\GGG$ at scale $\epsilon$} if
\[
V(\GGG,\ph,\epsilon) := \sup \{ |S_n\varphi (x) - S_n\varphi(y)| : (x,n) \in \GGG, y \in B_n(x, \epsilon) \} <\infty.
\]
We say $\varphi$ has the \emph{Bowen property on $\GGG$} if there exists $\epsilon>0$ so that $\varphi$ has the Bowen property on $\GGG$ at scale $\epsilon$.
\end{defn}

If $\GGG$ has the Bowen property at scale $\eps$, it has it for all smaller scales. 


\subsection{General results on uniqueness of equilibrium states}\label{sec:CT}

We prove existence and uniqueness of equilibrium states by using Theorem 5.6 of \cite{CT4}.  The idea is to find a collection of orbit segments $\GGG\subset X\times \mathbb{N}$ that satisfies specification and the Bowen property, and that is sufficiently large in an appropriate sense.  To make this precise, we need the following definition.  We denote $\mathbb{N}_0=\mathbb{N}\cup\{0\}$.

\begin{defn}
A \emph{decomposition} for $(X,f)$ consists of three collections $\mathcal{P}, \mathcal{G}, \mathcal{S}\subset X\times \mathbb{N}_0$ and three functions $p,g,s\colon X\times \mathbb{N}\to \NN_0$ such that for every $(x,n)\in X\times \NN$, the values $p=p(x,n)$, $g=g(x,n)$, and $s=s(x,n)$ satisfy $n = p+g+s$, and 
\begin{equation}\label{eqn:decomposition}
(x,p)\in \mathcal{P}, \quad (f^p(x), g)\in\mathcal{G}, \quad (f^{p+g}(x), s)\in \mathcal{S}.
\end{equation}
Given a decomposition $(\PPP,\GGG,\SSS)$ and $M\in \mathbb{N}$, we write $\GGG^M$ for the set of orbit segments $(x,n)$ for which 
$p \leq M$ and $s\leq M$.
\end{defn}
  
Note that the symbol $(x,0)$ denotes the empty set, and the functions $p, g, s$ are permitted to take the value zero.

\begin{thm}[Theorem 5.6 of \cite{CT4}] \label{t.generalBV}
Let $X$ be a compact metric space and $f\colon X\to X$ a homeomorphism. Let $\ph \colon X\to\RR$ be a continuous potential function.  Suppose there exists $\epsilon>0$ such that $\Pexp(\ph, \twentyone\epsilon) < P(\ph)$ and $X\times \NN$ admits a decomposition $(\PPP, \GGG, \SSS)$ with the following properties:
\begin{enumerate}[label=\upshape{(\arabic{*})}]
\item\label{c.spec} For each $M \geq 0$, $\GGG^M$ has tail specification at scale $\epsilon$;
\item\label{c.bowen} $\ph$ has the Bowen property at scale $\twentyone \epsilon$ on $\GGG$;
\item\label{c.gap}
 $P(\PPP \cup \SSS,\ph, \epsilon) + \Var(\ph, \twentyone \epsilon) < P(\ph)$.
\end{enumerate}
Then there is a unique equilibrium state for $\ph$.
\end{thm}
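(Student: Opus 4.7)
The plan is to follow Bowen's classical argument (expansivity + specification + Bowen regularity $\Rightarrow$ uniqueness), but localized to the `good' collection $\GGG^M$, and then show that the pressure gap hypotheses allow us to ignore everything outside $\GGG^M$ as well as non-expansive points. First I would show that the pressure of $X\times\NN$ is captured by $\GGG^M$ for some large $M$: using the decomposition we can bound $\Lsep_n(X\times\NN,\ph,\eps)$ by a sum indexed by triples of lengths $(p,g,s)$ with $p+g+s=n$, and each summand by a product $\Lspan_p(\PPP,\ph,\eps/3)\cdot \Lspan_g(\GGG^M,\ph,\eps/3)\cdot \Lspan_s(\SSS,\ph,\eps/3)$ up to a factor coming from $\Var(\ph,\eps)$. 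The hypothesis $P(\PPP\cup\SSS,\ph,\eps)+\Var(\ph,\twentyone\eps)<P(\ph)$ then forces $\lim_{M\to\infty} P(\GGG^M,\ph,\eps)=P(\ph)$, so in particular there exists $M$ with $P(\GGG^M,\ph,\eps) > \Pexp(\ph,\twentyone\eps)$ and arbitrarily close to $P(\ph)$. Fix such an $M$ for the rest of the proof.

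Next I would exploit specification on $\GGG^M$ (with gap $\tau$) and the Bowen property on $\GGG$ to upgrade the `good' partition sum $\Lsep_n(\GGG^M,\ph,\eps)$ into a genuine Gibbs-type lower bound. Concatenating $k$ orbit segments $(x_i,n_i)\in \GGG^M$ at scale $\eps$ produces, for any shadowing point $y$, an estimate $S_{N}\ph(y) \geq \sum_i S_{n_i}\ph(x_i) - k\,V(\GGG,\ph,\twentyone\eps) - k\tau\|\ph\|$, where $N=\sum n_i + (k-1)\tau$. Together with the standard counting argument (distinct $k$-tuples give $(N,\eps)$-separated shadowing points, thanks to the specification gap controlling the separation), this yields a submultiplicative-like inequality for the sequence $\Lsep_n(\GGG^M,\ph,\eps)$, and hence the existence of a reference measure $\nu$ (constructed as a weak-$\ast$ limit of empirical measures on $(n,\eps)$-separated subsets of $\GGG^M_n$) which satisfies a one-sided Gibbs bound $\nu(B_n(x,\eps)) \geq C e^{-nP(\ph)+S_n\ph(x)}$ on a positive-measure set.

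I would then use this reference measure to show that any ergodic equilibrium state $\mu$ with $h_\mu(f)+\int\ph\,d\mu = P(\ph)$ must give zero mass to $\mathrm{NE}(\twentyone\eps)$ (by the hypothesis $\Pexp(\ph,\twentyone\eps)<P(\ph)$, via the ergodic decomposition of $\Pexp$). On the expansive set, the Katok entropy formula combined with the one-sided Gibbs lower bound for $\nu$ gives $\mu \ll \nu$ on Bowen balls, and an application of the Hahn decomposition together with the specification-based mixing of $\nu$ forces $\mu = \nu$; two distinct ergodic equilibrium states would both equal $\nu$, giving uniqueness. Existence follows from upper semi-continuity of $h_\mu + \int\ph\,d\mu$ on the almost-expansive part of $\mathcal{M}(f)$, which is exactly what the condition $\Pexp(\ph,\twentyone\eps)<P(\ph)$ buys us.

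The step I expect to be the main obstacle is the reduction to $\GGG^M$: a naive decomposition-based count would introduce combinatorial factors from summing over $(p,g,s)$ and from the $\Var(\ph,\cdot)$ overheads at three scales, and keeping these under the single constant $\twentyone$ in the hypothesis requires careful bookkeeping between the $\eps$ at which separation is measured, the $\twentyone\eps$ at which Bowen is assumed, and the $\eps/3$ (or similar) scales that appear when bounding orbit sums via spanning sets. The second serious technical point is establishing the upper Gibbs bound for $\mu$ on the expansive set without assuming $\mu$ itself enjoys specification, which is handled via a weak form of the Brin--Katok theorem applied inside $\mathrm{NE}(\twentyone\eps)^c$; this is where almost expansivity, rather than expansivity, is essential.
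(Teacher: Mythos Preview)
The paper does not prove this theorem; it is quoted verbatim as Theorem~5.6 of \cite{CT4} and used as a black box, so there is no ``paper's own proof'' to compare against. Your sketch does capture the broad architecture of the Climenhaga--Thompson argument (reduce to $\GGG^M$ via the decomposition and pressure gap, build a Gibbs-type reference measure using specification and the Bowen property on $\GGG$, then use almost expansivity to force any equilibrium state to coincide with it), so as a high-level outline it is on target.

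That said, a few of your technical claims would not go through as stated. First, concatenating $k$ segments from $\GGG^M$ via specification at scale $\eps$ does \emph{not} automatically give $(N,\eps)$-separated shadowing points for distinct $k$-tuples; the shadowing is only accurate to $\eps$, so separation must be verified at a coarser scale, and this is precisely why the theorem requires the Bowen property and expansivity control at the larger scale $\twentyone\eps$ rather than $\eps$. Second, the uniqueness step is not a simple ``$\mu\ll\nu$ plus Hahn decomposition'' argument: in \cite{CT4} one establishes both upper and lower Gibbs bounds for the reference measure on $\GGG$-segments and then uses an adapted version of Bowen's argument that any ergodic measure with full pressure must charge $\GGG^M$ heavily (via the pressure gap on $\PPP\cup\SSS$), combined with the almost-expansivity condition to make the Gibbs bounds bite. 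Your instinct that the scale bookkeeping is the main obstacle is correct, but the resolution is not to pass to $\eps/3$ spanning sets; rather, one works with separated sets at $\eps$ and controls overcounting via the Bowen bound at $\twentyone\eps$.
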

We comment on these hypotheses. 
 The transition time $\tau$ for specification for $\GGG^M$ depends on $M$.  If $\GGG$ had specification at all scales, then a simple argument \cite[Lemma 2.10]{CT4} based on modulus of continuity of $f$ shows that the first hypothesis of the theorem is true for any $\epsilon$. Thus, considering $\GGG^M$ for all $M$ at a fixed scale stands in for controlling $\GGG$ at all scales. The Bonatti--Viana example is a situation where 
we can verify specification at a fixed scale for every $\GGG^M$, even though we do not expect $\GGG$ to have specification at all scales. The reason for this is that our proof of specification at scale $\eps$ requires us to start with a piece of a centre-unstable leaf of size $\eps$ and iterate it until it becomes $\eps$-dense in $\TT^4$; see \S\ref{sec:spec}.  As long as $\eps$ is larger than the size of the perturbation, this happens in a uniformly bounded number of iterates, independent of which centre-unstable leaf we start with; see Lemmas \ref{lem:large-scale} and \ref{lem:overflowing}.  When $\eps$ is smaller than the scale of the perturbation, though, the number of iterates it takes for this leaf to become $\eps$-dense need not be uniformly bounded, and so our proof of specification no longer works at these small scales.

 There are two scales present in Theorem \ref{t.generalBV}: $\eps$ and $\twentyone \eps$.
We require specification at scale $\eps$, while expansivity and the Bowen property are controlled at the larger scale $\twentyone \eps$. There is nothing fundamental about the constant $\twentyone$, but it is essential that expansivity and the Bowen property are controlled at a larger scale than specification. This is because every time we use specification in our argument to estimate an orbit, we move distance up to $\epsilon$ away from our original orbit, and we need to control expansivity and regularity properties for orbits after multiple applications of the specification property. The  $\Var(\ph, \twentyone \epsilon)$ term appears because we must control points that are distance up to $\twentyone \eps$ from a separated set for $\PPP \cup \SSS$. 


\section{Perturbations of Anosov Diffeomorphisms}\label{s.perturb}
In this section, we collect some more background material about weak forms of hyperbolicity, and perturbations of Anosov diffeomorphisms. We also establish 
a pressure estimate for $C^0$ perturbations of Anosov diffeomorphisms that plays a key role in 
our results.
\subsection{Dominated splittings} \label{basics}


Let $M$ be a compact manifold and let  $f\colon M\to M$ be a diffeomorphism.
A $Df$-invariant vector bundle $E \subseteq TM$ has a {\it dominated splitting} if
$$
E = E_1\oplus\cdots\oplus E_k,
$$
where each subbundle $E_i$ is $Df$-invariant with constant dimension,  
and there exists an integer $\ell\geq1$ with the following property: for every $x\in M$, all $i=1,\dots,(k-1)$,  and every pair of unit vectors $u\in E_1(x)\oplus\dots\oplus E_i(x)$ and $v\in E_{i+1}(x)\oplus\dots\oplus E_k(x)$,  it holds that
$$
 \frac{|Df_x^\ell (u)|}{|Df^{\ell}_x (v)|}\leq \frac{1}{2}.
$$
See for example \cite{mS16} or ~\cite[Appendix B, Section 1]{BDV} for some properties of systems with a dominated splitting.


For us, $k=2$, and we obtain a dominated splitting $TM = E^{cs} \oplus E^{cu}$, and there exist invariant foliations $W^{cs}$ and $W^{cu}$ tangent to $E^{cs}$ and $E^{cu}$ respectively that we call the \emph{centre-stable and centre-unstable foliations}.  For $x\in M$ we let $W^{\sigma}(x)$ be the leaf of the foliation $\sigma\in \{cs, cu\}$ containing $x$.
Given $x\in M$ and $y\in W^\sigma(x)$, we write
\begin{multline}\label{eqn:dW}
d_{W^\sigma}(x,y) = \inf\{ \mathrm{length}(\gamma) : \gamma\text{ is a path from $x$ to $y$}\\ \text{that is wholly contained in $W^\sigma(x)$} \}.
\end{multline}
Given $\eta>0$, we write $W^\sigma_\eta(x) = \{y\in W^\sigma(x) : d_{W^\sigma}(x,y) \leq \eta\}$.

Suppose $W^1,W^2$ are foliations of $M$. 
The standard notion of local product structure for $W^1,W^2$ says that for every $x,y\in M$ that are close enough to each other, the local leaves $W^u_\mathrm{loc}(x)$ and $W^s_\mathrm{loc}(y)$ intersect in exactly one point. Our definition of local product structure additionally keeps track of the scales involved. 
We say that $W^1,W^2$ have \emph{local product structure at scale $\eta>0$ with constant $\kappa\geq 1$} if for every $x,y\in M$ with $\eps := d(x,y) < \eta$, the leaves $W^1_{\kappa \eps}(x)$ and $W^2_{\kappa \eps}(y)$ intersect in a single point.

\subsection{Constants associated to Anosov maps} \label{constants}
Let $f\colon M\to M$ be a transitive Anosov diffeomorphism. A constant that will be important for us is the constant $C=C(f)$ arising from the Anosov shadowing lemma \cite{KH},  \cite[Theorem 1.2.3]{sP99}.

\begin{lem}[Anosov Shadowing Lemma]\label{shadowinglemma} 
Let $f$ be a transitive Anosov diffeomorphism. There exists $C=C(f)$ so that if $2 \eta>0$ is an expansivity constant for $f$, then every $\frac\eta C$-pseudo-orbit $\{x_n\}$ for $f$ can be $\eta$-shadowed by an orbit $\{y_n\}$ for $f$.
\end{lem}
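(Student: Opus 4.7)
The approach I would take is the classical graph-transform fixed-point argument adapted to bilateral pseudo-orbits. Given a $\delta$-pseudo-orbit $\{x_n\}_{n\in\ZZ}$ with $\delta$ small, I look for a shadowing orbit of the form $y_n = \exp_{x_n}(v_n)$, where $v_n \in T_{x_n}M$. After trivialising along the pseudo-orbit via exponential charts, the orbit condition $f(y_n)=y_{n+1}$ becomes
\[
v_{n+1} = A_n v_n + e_n + r_n(v_n),
\]
where $A_n = Df_{x_n}$, the error $e_n := \exp_{x_{n+1}}^{-1}(f(x_n))$ satisfies $\|e_n\| \le C_1 \delta$ by bounded geometry of $\exp$, and $r_n$ is a superlinear remainder with $r_n(0) = 0$ whose Lipschitz constant on the $\eta$-ball is $O(\eta)$, controlled by the $C^2$-norm of $f$.

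Next I would invert the linear part using hyperbolicity. Decomposing $T_{x_n}M = E^s(x_n)\oplus E^u(x_n)$ and passing to an adapted (Lyapunov) norm in which $\|A_n|_{E^s}\| \le \lambda$ and $\|A_n^{-1}|_{E^u}\|\le\lambda$ for some $\lambda\in(0,1)$, the linear equation $v_{n+1} - A_n v_n = w_n$ admits, for any bounded sequence $(w_n)$, a unique bounded solution; its stable component is obtained by forward summation and its unstable component by backward summation, and the resulting linear operator $L$ satisfies $\|L\| \le (1-\lambda)^{-1}$. The shadowing problem then becomes the fixed-point equation $v = L(e) + L\circ r(v)$ on the closed $\eta$-ball of the Banach space $\Sigma_\eta := \ell^\infty(\ZZ, TM)$. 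For $\eta$ small enough the map is a contraction, with unique fixed point $v^*$ satisfying $\|v^*\|_\infty \le (1-\lambda)^{-1}C_1\delta$.

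Setting $C := C_2 C_1 /(1-\lambda)$, where $C_2$ absorbs the Lipschitz constant between the adapted norm and the Riemannian distance, the trajectory $y_n = \exp_{x_n}(v_n^*)$ is a true $f$-orbit with $d(y_n, x_n) \le \eta$ whenever $\delta \le \eta/C$. The hypothesis that $2\eta$ is an expansivity constant makes the shadowing orbit unique, which is what justifies packaging the construction into a single constant $C = C(f)$ independent of the pseudo-orbit.

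The main obstacle will be two-fold. First, one must set up the adapted norm and the bilateral linear inverse $L$ so that its operator bound depends only on the original hyperbolic data of $f$ rather than on cumulative distortions under composition; this is what permits extracting a universal constant. Second, one must verify that the nonlinear remainder $r_n$ is uniformly $O(\eta)$-Lipschitz on $\eta$-balls, which requires controlling the second derivative of $f$ in exponential charts (available by compactness of $M$). Both steps are standard but must be executed carefully to obtain an explicit $C = C(f)$ that works for every $\eta/C$-pseudo-orbit, as the statement requires.
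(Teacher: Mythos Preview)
Your approach is correct and is one of the two standard proofs of the Anosov shadowing lemma (the other being via local product structure and an inductive construction). However, the paper does not give its own proof of this lemma: it is stated as a classical result and attributed to \cite[Theorem 1.2.3]{sP99} and \cite{KH}. So there is nothing to compare against beyond noting that your sketch is a faithful outline of the Banach fixed-point proof found in those references.

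One small point worth tightening: the statement asks for a single constant $C=C(f)$ that works for \emph{every} $\eta$ with $2\eta$ an expansivity constant, not just for sufficiently small $\eta$. Your contraction argument as written requires $\eta$ small enough that the remainder $r_n$ is, say, $\tfrac12(1-\lambda)$-Lipschitz on the $\eta$-ball. This is harmless, since for Anosov maps the expansivity scale is itself governed by the size of the region where the exponential charts linearise $f$ well; but you should say explicitly that the upper bound on $\eta$ coming from the contraction step is at least as large as (a fixed multiple of) the expansivity constant, or else absorb the discrepancy into $C$. Otherwise the constant you produce might a priori depend on $\eta$.
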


Another constant that will appear in our analysis is $L=L(f, \eta)$ associated with the growth of certain partition sums for $f$. Recall that $f$ is expansive and has the specification property, and let $h=\htop(f)$ be the topological entropy . For any $\eta>0$ smaller than the expansivity constant for $f$,  there is a constant $L =  L(f, \eta)$ so that
\begin{equation}\label{eqn:Lambda-upper-bound}
\Lambda^{\mathrm{sep}}_n(M\times\mathbb{N},0,\eta; f) \leq  L e^{nh}
\end{equation}
for every $n$. See, e.g. \cite[Lemma 3]{Bow75}. 
The constant $L$ can be determined explicitly in terms of the transition time in the specification property.

\subsection{Partition sums for $C^0$ perturbations} 
\label{sec:perturbation-sums}

Let $f\colon M\to M$ be a transitive Anosov diffeomorphism of a compact manifold.  Using the Anosov shadowing lemma, we show that there is a $C^0$-neighborhood $\mathcal{U}$ of $f$ such that for every $g\in \mathcal{U}$, there is a natural map from $g$ to $f$ given by sending a point $x$ to a point whose $f$-orbit shadows the $g$-orbit of $x$. 
It is a folklore result that this map is a semi-conjugacy when $\mathcal{U}$ is sufficiently small.\
{For example, this follows from the proof of \cite[Proposition 4.1]{BFSV}}. This allows us to control partition sums of $g$ at large enough scales from above, and the pressure at all scales from below; {the following lemma is proved in \S \ref{s.lemmas}.}

\begin{prop} \label{pressuredrop}
Let $f$ be a transitive  Anosov diffeomorphism. Let $C= C(f)$ be the constant from the Anosov shadowing lemma, and $3 \eta>0$ be an expansivity constant for $f$.  If $g\in \Diff(M)$ is such that $d_{C^0}(f,g) < \eta/C$, then:
\begin{enumerate}[label=\upshape{(\roman{*})}]
\item\label{Pg-geq}
 $P(\varphi; g)\geq P(\varphi; f)- \Var(\varphi, \eta)$;
\item\label{Lambdag-leq}
 $\Lambda^{\mathrm{sep}}_n(\varphi, 3 \eta ;g) \leq \Lambda^{\mathrm{sep}}_n(\varphi, \eta ;f)e^{n \Var(\varphi, \eta) }$.
\end{enumerate}
\end{prop}

It follows from \ref{Lambdag-leq} that
\begin{equation}\label{eqn:Pg-leq}
P(\varphi, 3\eta; g) \leq P(\varphi; f) + \Var(\varphi, \eta).
\end{equation}
However, it may be that $P(\ph; g)$ is greater than $P(\ph,3\eta; g)$ due to the appearance of entropy at smaller scales for $g$.  Nonetheless, we can obtain an upper bound on $P(\ph; g)$ which involves the tail entropy; Lemma \ref{smallerscales} 
and \eqref{eqn:Pg-leq} together give the bound
\begin{equation}\label{eqn:Pgphleq}
P(\ph; g) \leq P(\ph; f) + h_g^*(3\eta) + 2\Var(\ph,3\eta).
\end{equation}
The pressure of $g$, and consequently the tail entropy term, can be arbitrarily large for a $C^0$ perturbation of $f$.  For example, $f$ can be perturbed continuously in a neighborhood of a fixed point to create a whole disc of fixed points, and then composed with a homeomorphism of this disc that has arbitrarily large entropy.

\subsection{Pressure estimates} \label{pressureestimate}

The examples that we consider are obtained as $C^0$-perturbations of Anosov maps, where the perturbation is made inside a small neighborhood of some fixed points.  Our strategy is to apply the abstract uniqueness results of Theorem \ref{t.generalBV} when $\PPP,\SSS$ are orbit segments spending a large proportion of their time near the fixed points. 
In this section we give an estimate on the pressure carried by such orbit segments.  We fix the following data.
\begin{itemize}
\item Let $f\colon M\to M$ be a transitive Anosov diffeomorphism of a compact manifold, with topological entropy $h=\htop(f)$.
\item Let $q$ be a fixed point for $f$.
\item Let $3 \eta$ be an expansivity constant for $f$. 
\item Let $C=C(f)$ be the constant from the shadowing lemma.
\item Let $L =  L(f, \eta)$ be a constant so that \eqref{eqn:Lambda-upper-bound} holds.
\end{itemize}
Now we choose $g$, $\CCC$, and $\ph$:
\begin{itemize}
\item Let $g\colon M\to M$ be a diffeomorphism with $d_{C^0}(f, g)< \eta/C$.
\item Let $\rho<3\eta$.
\item Let $r >0$ be small, and let $\CCC=\CCC(q,r;g)$ be the collection of all orbit segments $(x,n)$ that spend less than $r$ of their time outside $B(q,\rho)$, that is,
\begin{equation}\label{eqn:C}
\CCC =\CCC(q,r;g) = \{(x,n)\in M\times \mathbb{N} \, :\,  S_n^g\chi_q(x)< n r \},
\end{equation}
where $\chi_q$ is the indicator function of $M \setminus B(q, \rho)$.
 \item Let $\ph$ be any continuous function.
\end{itemize}
We write $H(r) = -r\log r - (1-r) \log(1-r)$. We have the following entropy and pressure estimates on $\CCC$.

\begin{thm} \label{coreestimateallscales}
Under the assumptions above, we have the inequality 
\begin{equation} \label{eqn:hC}
h(\CCC,6\eta;g) \leq r(\htop(f) + \log L) +H(2r),
\end{equation}
and the inequality that for any scale $\delta>0$,
\begin{equation} \label{eqn:PC}
P(\CCC,\ph,\delta;g) \leq (1-r) \sup_{x\in B(q, \rho)}\ph(x) + r \sup_{x\in M}\ph(x) + h(\CCC,\delta;g),
\end{equation}
and thus it follows that
\[
P(\CCC, \ph; g) \leq  h_g^\ast(6 \eta) +(1-r) \sup_{x\in B(q, \rho)}\ph(x) + r( \sup_{x\in M}\ph(x) + h + \log L ) + H(2r).
\]
\end{thm}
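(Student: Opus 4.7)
The three inequalities are linked, so my plan is to focus on \eqref{eqn:hC} first and deduce the other two from it. Indeed, \eqref{eqn:PC} will drop out by pulling the exponential weight out of the partition sum, and the scale-free bound follows from both by invoking the tail-entropy lemma \ref{smallerscales}. To establish \eqref{eqn:hC}, I will use the Anosov Shadowing Lemma to lift an $(n, 6\eta)$-separated subset of $\CCC_n$ to an $(n, \eta)$-separated set of $f$-orbits that still spend most of their time near $q$, and then bound these $f$-orbits by combining a combinatorial enumeration of their `excursion' times with Bowen's partition-sum bound \eqref{eqn:Lambda-upper-bound}.

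Concretely, I would fix an $(n, 6\eta)$-separated set $E \subseteq \CCC_n$ for $g$. Since $d_{C^0}(f, g) < \eta/C$, the $g$-orbit of each $x \in E$ is an $(\eta/C)$-pseudo-orbit for $f$, and Lemma \ref{shadowinglemma} produces $\pi(x) \in M$ with $d(g^i x, f^i \pi(x)) < \eta$ for $0 \leq i < n$. A triangle-inequality argument shows $\pi(E)$ is $(n, 4\eta)$-separated (hence $(n, \eta)$-separated) for $f$, so in particular $|E| = |\pi(E)|$. Whenever $g^i x \in B(q, \rho)$, the shadow satisfies $f^i \pi(x) \in B(q, \rho + \eta) \subset B(q, 4\eta)$ (using $\rho < 3\eta$), so $|\{i : f^i \pi(x) \notin B(q, 4\eta)\}| \leq |\{i : g^i x \notin B(q, \rho)\}| < rn$, yielding
\[
\pi(E) \subseteq \bigcup_{|S| < rn} A_S, \qquad A_S := \bigcap_{i \notin S} f^{-i} B(q, 4\eta),
\]
with $S$ ranging over subsets of $\{0, \ldots, n-1\}$.

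The main work is then to bound $N_S := \Lsep_n(A_S, 0, \eta; f)$. A naive application of \eqref{eqn:Lambda-upper-bound} gives only $N_S \leq L e^{hn}$, which is useless. The key observation is that requiring $f^i y \in B(q, 4\eta)$ for all $i \notin S$ forces $y$ to cluster near the local stable and unstable manifolds of $q$ during each `trapped' stretch, so $y$ is essentially determined by its behavior at the excursion times $i \in S$. I expect that gluing the excursion pieces across the trapped stretches, via the specification/shadowing structure of $f$ near the fixed point $q$, reduces the counting problem to that of length-$|S|$ orbits of $f$, producing a bound of the form $N_S \leq L e^{h |S|}$. Summing this over $|S| < rn$ using the standard partial-binomial estimate $\sum_{k < rn} \binom{n}{k} \leq e^{nH(r)}$, with a small slackening to absorb polynomial-in-$n$ and $\log L$ factors into $H(2r) + r(h + \log L)$, then yields \eqref{eqn:hC}.

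Finally, \eqref{eqn:PC} is immediate: for $(x, n) \in \CCC$ one has $S_n^g \ph(x) \leq n[(1-r) \sup_{B(q, \rho)} \ph + r \sup_M \ph]$, which on factoring out of the partition sum gives \eqref{eqn:PC}. For the scale-free bound, I would let $\delta \to 0$ in \eqref{eqn:PC} and apply Lemma \ref{smallerscales} with $\ph \equiv 0$ to obtain $h(\CCC, \delta; g) \leq h(\CCC, 6\eta; g) + h_g^*(6\eta)$, then substitute \eqref{eqn:hC}. The hard part will be the bound $N_S \leq L e^{h|S|}$: replacing the trivial factor $e^{hn}$ with $e^{h|S|}$ is the crux of the theorem, and extracting it requires a careful use of the hyperbolic geometry at $q$ (via specification and shadowing for the Anosov map $f$) to collapse the near-$q$ stretches and concatenate the excursions into an effective orbit of length $|S|$.
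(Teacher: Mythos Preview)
Your treatment of \eqref{eqn:PC} and the final scale-free inequality is fine and matches the paper. The gap is in the key step for \eqref{eqn:hC}: after shadowing, the bound $N_S \leq L e^{h|S|}$ does not follow as easily as you suggest, and the order in which you invoke shadowing is what creates the difficulty.

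Once you pass to $f$-orbits, the ``trapped'' condition is $f^i\pi(x)\in B(q,4\eta)$, while your separation scale for $\pi(E)$ is $4\eta$ (or $\eta$). Two points both lying in $B(q,4\eta)$ can be up to $8\eta$ apart, so nothing forces the $(n,4\eta)$-separation of $\pi(E)$ to occur only at excursion times $i\in S$. Your proposed remedy---``collapse the near-$q$ stretches and concatenate the excursions via specification''---is the whole content of the estimate, and you have not indicated how to carry it out; expansivity at scale $3\eta$ is not directly usable since the trapped radius is $4\eta$. Also note that your target $N_S\leq L e^{h|S|}$ is too optimistic: the $r\log L$ term in \eqref{eqn:hC} arises precisely because the bound is $L^{\ell}e^{h|S|}$ with $\ell$ the number of excursion intervals, not a single factor of $L$.

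The paper avoids this by reversing the order: it works with $g$-orbits directly, recording for each $(x,n)\in\CCC$ the maximal intervals where $g^i x$ lies inside vs.\ outside $B(q,\rho)$. For two points with the same interval data, during ``inside'' intervals both lie in $B(q,\rho)$ and are thus within $2\rho<6\eta$; hence $(n,6\eta)$-separation can only occur during ``outside'' intervals. This yields an injection from each time-data class into a product $\prod_i F_{m_i}$ of $(m_i,3\eta)$-separated sets for $g$, and \emph{only then} is shadowing used, via Lemma~\ref{pressuredrop}\ref{Lambdag-leq}, to bound $|F_{m_i}|\leq \Lambda^{\mathrm{sep}}_{m_i}(M,0,\eta;f)\leq L e^{m_i h}$. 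Multiplying gives $L^{\ell}e^{(\sum m_i)h}\leq L^{rn+1}e^{rnh}$, and a binomial count of interval-data tuples supplies the $H(2r)$ term. The moral: do the time-decomposition for $g$ at scale $6\eta$ (so the small ball $B(q,\rho)$ kills separation during trapped times for free), and invoke shadowing only on the excursion pieces.
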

\begin{proof} 
First we prove the entropy estimate \eqref{eqn:hC}. For each $(x,n)\in \CCC$, we partition its orbit into segments entirely in $B(q, \rho)$, and segments entirely outside $B(q, \rho)$.
More precisely, given $(x,n)\in \CCC$, let $((x_i,n_i), (y_i,m_i))_{i=1}^{\ell}$ 
be the uniquely determined sequence such that
\begin{itemize}
\item {$x_0=x$ and $\sum_{i=1}^\ell (n_i + m_i) = n$;}
\item $g^{n_i}(x_i) = y_i$ and $g^{m_i}(y_i) = x_{i+1}$;
\item $x_i \in B_{n_i}(q, \rho)$ (letting $n_0=0$ if $x \notin B(q, \rho)$); 
\item $(y_i,m_i)$ corresponds to an orbit segment entirely contained in $M \setminus B(q, \rho)$ (letting $m_\ell =0$ if $g^{n-1}x \in B(q, \rho)$).
\end{itemize}
Note that $\ell = \ell(x,n)$ satisfies $\ell-1 \leq \sum_{i=1}^\ell m_i = S_n^g\chi (x) <  n r$. 
For $(x,n) \in \CCC$, let
\[
\underline t (x,n) = 
{(\ell, \mm, \nn) = (\ell, (m_1,\dots, m_\ell), (n_1,\dots, n_\ell))}
\]
be the time data obtained this way.
{Given $n\in \NN$ and $r>0$, let
\[
\JJJ_n^r = \{(\ell,\mm,\nn) : 1\leq \ell \leq nr+1,\
\textstyle \sum(m_i + n_i) = n,\ \sum m_i < nr \}.
\]
Writing $(\CCC_n)_{\ell,\mm,\nn} = \{(x,n)\in \CCC_n : \underline{t}(x,n) = (\ell,\mm,\nn)\}$, we have
\[
\CCC_n = \bigcup_{(\ell,\mm,\nn) \in \JJJ_n^r} (\CCC_n)_{\ell,\mm,\nn}.
\]
Thus we can estimate $\Lsep_n(\CCC_n,0,6\eta)$ in terms of $\Lsep_n((\CCC_n)_{\ell,\mm,\nn},0,6\eta)$ and $\#\JJJ_n^r$.
}

{For the first of these,} let $E_n\subset \CCC_n$ be $(n,6\eta)$-separated, and let $F_n$ be {maximally} $(n, 3\eta)$-separated, and thus $(n, 3\eta)$-spanning, for $M$. Note that if $z_1, z_2 \in (\CCC_n)_{\ell, \mm, \nn}$, then $d_{n_i}(g^{s_{i-1}} z_1, g^{s_{i-1}} z_2)<2 \rho<6\eta$ at times $s_i$ which correspond to the orbits entering $B_{n_i}(q, \rho)$; that is, for $s_0 =0$ and $s_{i-1} = \sum_{j=1}^{i-1} (n_j+m_j)$. Thus, if $z_1, z_2 \in E_n \cap (\CCC_n)_{\ell, \mm, \nn}$ with $z_1 \neq z_2$, then there exists $i$ with $d(g^iz_1, g^iz_2)>6\eta$, and the time $i$ can occur only when the orbit segments are outside $B(q, \rho)$. More precisely, let $r_0=n_1$, $r_1=n_1+m_1+n_2$, and $r_i= \sum_{j=1}^{i+1}n_i + \sum_{j=1}^{i}m_i$.  There must exist $i$ so that $d_{m_i}(g^{r_{i-1}}z_1, g^{r_{i-1}}z_2) > 6 \eta$.

We define a map $\pi\colon (\CCC_n)_{\ell, \mm, \nn} \to F_{m_1} \times \cdots \times F_{m_\ell}$
by choosing $\pi_i(z) \in F_{m_i}$ with the property that $d_{m_i}(g^{r_{i-1}}z, \pi_i(z)) \leq 3 \eta$. It follows from the above that if  $z_1, z_2 \in E_n \cap (\CCC_n)_{\ell, \mm, \nn}$ with $z_1 \neq z_2$, there exists $i$ with $d_{m_i}(g^{r_{i-1}}z_1, g^{r_{i-1}}z_2) > 6 \eta$, and thus $\pi_i(z_1) \neq \pi_i(z_2)$. Thus, the map $\pi$ is injective.

Recall that $L$ is the constant such that \eqref{eqn:Lambda-upper-bound} holds {and that $h=\htop(f)$}.  Since $d_{C_0}(f, g) < \eta/C$
, using Proposition \ref{pressuredrop}, we have
\begin{equation}\label{eqn:upper-bound}
\Lsep_m(M,0,3\eta; g) \leq  \Lsep_m(M,0,\eta;f) \leq L e^{mh}.
\end{equation}
Thus it follows from injectivity of the map $\pi$ that
\[
\Lsep_n((\CCC_n)_{\ell, \mm, \nn},0,6\eta) \leq \prod_{i=1}^\ell  \Lsep_{m_i}(M,0,3\eta;g) \leq L^{\ell} e^{(\sum m_i) h} \leq L^{nr+1} e^{nrh},
\]
and thus summing over all choices of $\ell, \mm,\nn$, we obtain
\[
\Lsep_n(\CCC_n,0,6\eta) \leq \sum_{(\ell,\mm,\nn)\in \JJJ_n^r} \Lsep_n((\CCC_n)_{\ell, \mm, \nn},0,6\eta)  \leq L^{nr+1} (\# 
\JJJ_n^r) e^{nrh}.
\]
Now we observe that {given $1\leq \ell\leq nr+1$,} 
the choice of $\mm,\nn$ is uniquely determined by choosing 
$2 \ell-1$ 
 elements of $\{0,1,\dots,n-1\}$, which are the partial sums of $m_i$ and $n_i$ (the times when the trajectory enters or leaves $B(q,\rho)$, {denoted by $r_i$ and $s_i$ above}).  
An elementary computation using Stirling's formula or following \cite[Lemma 5.8]{C15} shows that the  number of such $\ell, \mm,\nn$ is at most 
\[
\sum_{k=1}^{2nr+1} 
\binom{n}{k}
\leq 
(2nr+1) (n+1) e^{n H((2nr+1)/n) + 1},
\]
and so we have
\[
\Lsep_n(\CCC,0,6\eta) \leq L^{nr+1} (2nr+1)(n+1) e^{ nrh} e^{nH(2r+\frac 1n)}.
\]
This gives the bound $h(\CCC,6\eta;g) \leq r(\htop(f) + \log L)  +H(2r)$,
which establishes \eqref{eqn:hC}. The pressure estimate \eqref{eqn:PC} follows from \eqref{eqn:hC} by observing that for every $(x,n)\in \CCC$ we have $g^kx\in \overline B(q,\rho)$ for at least $(1-r)n$ values of $k\in \{0,1,\dots,n-1\}$, and so
\[
S_n^g\ph(x) \leq (1-r)n \sup_{x\in B(q,\rho)} \ph(x) + rn \sup_{x\in M} \ph(x);
\]
this yields the partition sum estimate
\[
\Lsep_n(\CCC, \ph,\delta;g) \leq \Lsep_n(\CCC, 0,\delta;g) \exp (n \{(1-r)\sup_{x\in B(q, \rho)}\ph(x) + r \sup_{x\in M}\ph(x)\}),
\]
which implies \eqref{eqn:PC}.  The third displayed inequality of Theorem \ref{coreestimateallscales} is immediate from the inequalities \eqref{eqn:hC}, \eqref{eqn:PC} and Lemma \ref{smallerscales}.
\end{proof}



\subsection{Obstructions to expansivity}\label{sec:obstr-exp} 

The diffeomorphisms $g$ that we consider will be shown to satisfy the following expansivity property, where we continue to write $\chi_q$ for the indicator function of $M\setminus B(q,\rho)$:
\begin{enumerate}[label=\upshape{\textbf{[\Alph{*}{]}}}]
\setcounter{enumi}{4}
\item\label{E} there exist $\epsilon >0$, $r>0$, and fixed points $q,q'$ 
such that 
for $x \in M$, if there exists a sequence $n_k\to\infty$ with $\frac{1}{n_k}S^g_{n_k}\chi_q(x) \geq r$, and a sequence $m_k\to\infty$ with $\frac{1}{m_k}S^{g^{-1}}_{m_k}\chi_{q'}(x) \geq r$, then $\Gamma_\eps(x)=\{x\}$.
\end{enumerate}

In the previous section, and this one, formally $q$ and $q'$  could be any fixed points for $g$ that verify condition \ref{E}.  In applying this to our main results, we naturally take $q,q'$ to be the points around which we make the perturbation that defines the Bonatti--Viana examples.

\begin{thm} \label{expansivityestimate}
If $g$ is as in the previous section and $q,q'$ are such that \ref{E} holds, then we have 
$\Pexp(\ph,\eps) \leq P(\CCC(q,r) \cup \CCC(q',r),\ph)$.
\end{thm}

\begin{proof} 
Write $\chi=\chi_q, \chi'=\chi_{q'}$, $\CCC=\CCC(q,r;g)$, $\CCC'=\CCC(q',r;g)$.  Consider the sets
\begin{equation}\label{eqn:A+}
\begin{aligned}
A^+ &= \{x : \text{there is } K(x) \text{ so } \tfrac{1}{n}S^g_{n}\chi(x) < r \text{ for all } n>K(x)\}, \\
A^- &= \{x : \text{there is } K(x) \text{ so } \tfrac{1}{n}S^{g^{-1}}_{n}\chi'(x) < r \text{ for all } n>K(x)\}.
\end{aligned}
\end{equation}
Theorem \ref{expansivityestimate} is an application of the following theorem, whose proof is based on the Katok pressure formula \cite{lM88}.

\begin{lem} \label{keystepexpansivityestimate}
Let $\mu \in \mathcal{M}_e(g)$. If either $\mu(A^+) >0$ or $\mu(A^-)>0$, then 
$h_{\mu}(g)+\int \ph\, d \mu \leq P(\CCC\cup\CCC', \ph)$.
\end{lem}
\begin{proof} 
Start with the case where $\mu(A^+)>0$; we show that $h_\mu(g) + \int\ph\,d\mu \leq P(\CCC,\ph)$.  Given $k\in \NN$, let $A_k^+ = \{ x \in A^+ \, :\,  K(x) \leq k\}$, and observe that $\mu (\bigcup_k A_k^+) > 0$, so there is some $k$ such that $\mu(A_k^+)>0$.

Note that for every $n>k$ and $x\in A_k^+$, we have $(x,n) \in \CCC$.  It follows that for every $\delta>0$ we have
\begin{equation}\label{eqn:PAk}
\Lsep_n(A_k^+, \ph,\delta; g) \leq \Lsep_n(\CCC,\ph,\delta;g).
\end{equation}
Fix $\alpha \in (0,\mu(A_k^+))$ and consider the quantity
\[
s_n(\ph, \delta, \mu,\alpha ; g)=\mathrm{inf}\left \{ \sum_{x\in E} \exp\{S^g_n\ph(x)\} :  \mu \left(\bigcup_{x\in E} \overline B_n(x, \delta) \right)\geq \alpha \right \},
\]
where the infimum is taken over finite subsets $E\subset X$.  
The pressure version of the Katok entropy formula  \cite{lM88} states that
\[
h_{\mu}(g) + \int \ph\, d \mu=\lim_{\delta\rightarrow 0}\limsup_{n\rightarrow \infty}\frac{1}{n}\log s_n(\ph, \delta, \mu, \alpha; g).
\]
Note that $s_n(\ph, \delta, \mu, \alpha; g) \leq \Lspan_n(A_k^+, \ph, \delta;g) \leq \Lsep_n(A_k^+, \ph, \delta;g) \leq \Lsep_n(\CCC,\ph,\delta;g)$.
It follows that 
\[
h_{\mu}(g)+\int \ph\, d \mu \leq P(\CCC, \ph) = \lim_{\delta \to 0} P(\CCC, \ph,  \delta).
\]
The case where $\mu(A^-)>0$ is similar: obtain $A_k^-\subset A^-$ such that $K(x)\leq k$ for all $x\in A_k^-$ and $\mu(A_k^-)>0$.  Then  observe that for $x \in A_k^-$, we have $(g^{-n}x, n) \in \CCC'$ for any $n \geq k$.  Moreover, $(n, \epsilon)$-separated sets for $g$ are in one to one correspondence with $(n, \epsilon)$-separated sets for $g^{-1}$, and $S_n^{g^{-1}}\ph (x) = S_n^g \ph(g^{-n+1}x)$.  Then a simple argument shows that $P(A_k^-, \ph, \epsilon; g^{-1}) \leq P(\CCC',  \ph, \epsilon;g)$. 

Finally, Katok's pressure formula applied to $g^{-1}$ tells us that
\[
h_{\mu}(g) + \int \ph\, d \mu=\lim_{\delta\rightarrow 0}\limsup_{n\rightarrow \infty}\frac{1}{n}\log s_n(\ph, \delta, \mu, \alpha; g^{-1}).
\]
Thus $h_{\mu}(g)+\int \ph\, d \mu \leq \lim_{\delta \to 0}  P(A_k^-, \ph, \epsilon; g^{-1}) \leq\lim_{\delta \to 0} P(\CCC', \ph,  \delta)$. 
\end{proof}

Now, to prove Theorem \ref{expansivityestimate}, by the hypothesis \ref{E}, if $\Gamma_\eps(x) \neq \{x\}$, then either there are only finitely many $n$ so that $\frac{1}{n}S^g_{n}\chi(x) \geq r$, or there are only finitely many $n$ so that $\frac{1}{n}S^{g^{-1}}_{n}\chi'(x) \geq r$. Thus, if $x\in \mathrm{NE}(\epsilon)$, then either $x\in A^+$ or $x \in A^-$. Thus, if $\mu$ is an ergodic measure satisfying $\mu(\mathrm{NE}(\epsilon)) >0$; then at least one of $A^+$ or $A^-$ has positive $\mu$-measure.  Thus, Theorem \ref{keystepexpansivityestimate} applies, and we conclude that 
\[
h_{\mu}(g)+\int \ph\, d \mu \leq P(\CCC \cup \CCC', \ph). \qedhere 
\]
\end{proof}


\subsection{Cone estimates and local product structure}\label{sec:torus}

Let $F^1,F^2 \subset \RR^d$ be subspaces such that $F^1 \cap F^2 = \{0\}$ (we do not assume that $F^1 + F^2 = \RR^d$).  Let
$\measuredangle(F^1,F^2) := \min\{\measuredangle(v,w) \, :\,  v\in F^1, w\in F^2\}$, and consider the quantity $\bk(F^1,F^2) :=  (\sin\measuredangle(F^1,F^2))^{-1} \geq 1$.  Some elementary trigonometry shows that 
\begin{equation}\label{eqn:barkappa}
\|v\| \leq \bk(F^1,F^2) \text{ for every } v\in F^1 \text{ with } d(v,F^2) \leq 1,
\end{equation}
or equivalently,
\begin{equation}\label{eqn:barkappa2}
\|v\| \leq \bk(F^1,F^2) d(v,F^2) \text{ for every } v\in F^1.
\end{equation}
Given $\beta \in (0,1)$ and $F^1,F^2 \subset \RR^d$, the \emph{$\beta$-cone of $F^1$ and $F^2$} is 
\[
C_\beta(F^1,F^2) = \{ v+w \, :\,  v\in F^1, w\in F^2, \|w\| < \beta \|v\| \}.
\]

\begin{lem}\label{lem:Wlps}
Let $W^1,W^2$ be any foliations of $F^1 \oplus F^2$ with $C^1$ leaves such that $T_x W^1(x) \subset C_\beta(F^1,F^2)$ and $T_x W^2(x) \subset C_\beta(F^2,F^1)$, and let $\bk = \bk(F^1,F^2)$.  Then for every $x,y\in F^1 \oplus F^2$ the intersection $W^1(x) \cap W^2(y)$ consists of a single point $z$.  Moreover,
\[
\max\{d_{W^1}(x,z), d_{W^2}(y,z)\} \leq \frac{1+\beta}{1-\beta} \bk d(x,y),
\]
where $d_{W^i}$ is as in \eqref{eqn:dW}.
\end{lem}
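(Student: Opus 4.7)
The strategy is to represent each leaf as a global Lipschitz graph over the appropriate factor and thereby reduce the intersection to a contraction mapping problem. First, I would establish that the leaf $W^1(x)$ through $x = a_1 + a_2$ (with $a_i \in F^i$) is the graph of a globally defined $C^1$ function $\phi_x \colon F^1 \to F^2$ satisfying $\|D\phi_x\| \leq \beta$ everywhere, and symmetrically that $W^2(y)$ is the graph of a $C^1$ function $\psi_y \colon F^2 \to F^1$ with $\|D\psi_y\| \leq \beta$. The derivative bound is immediate from the cone condition: writing the tangent space to the graph at $u + \phi_x(u)$ as $\{v + D\phi_x(u)v : v\in F^1\}$, inclusion in $C_\beta(F^1,F^2)$ forces $\|D\phi_x(u)v\| \leq \beta\|v\|$. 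The global nature of the graph follows from a path-lifting argument: the projection $\pi_1 \colon F^1 \oplus F^2 \to F^1$ along $F^2$ restricts to a local diffeomorphism on each leaf, and the Lipschitz bound prevents the lift of any straight segment in $F^1$ from escaping, so $\pi_1|_{W^1(x)}$ is a covering map onto the simply connected base $F^1$, and hence a diffeomorphism.

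Next, a point $z = u_1 + u_2 \in W^1(x) \cap W^2(y)$ must satisfy $u_2 = \phi_x(u_1)$ and $u_1 = \psi_y(u_2)$, which reduces to the $F^1$-valued fixed-point equation $u_1 = \psi_y(\phi_x(u_1))$. Since $\phi_x$ and $\psi_y$ are $\beta$-Lipschitz, their composition is $\beta^2$-Lipschitz with $\beta^2 < 1$, and Banach's fixed-point theorem yields a unique solution $u_1$, and hence a unique intersection point $z$.

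For the distance estimate, the key input is \eqref{eqn:barkappa2}. Writing $y = b_1 + b_2$ with $b_i\in F^i$, the identity $(b_1 - a_1) + (b_2 - a_2) = y - x$ shows that $-(b_2 - a_2)\in F^2$ witnesses $d(b_1 - a_1, F^2) \leq \|y - x\|$, and hence $\|b_1 - a_1\| \leq \bk\, d(x,y)$; symmetrically, $\|b_2 - a_2\| \leq \bk\, d(x,y)$. The defining relations combined with the Lipschitz bounds yield $\|u_2 - a_2\| \leq \beta\|u_1 - a_1\|$ and $\|u_1 - b_1\| \leq \beta\|u_2 - b_2\|$; substituting one into the other gives $(1 - \beta^2)\|u_1 - a_1\| \leq \beta\|a_2 - b_2\| + \|b_1 - a_1\| \leq (1+\beta)\bk\, d(x,y)$. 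Finally, parametrizing a straight segment in $F^1$ from $a_1$ to $u_1$ and lifting via $\phi_x$, the tangent vector $v + D\phi_x(\cdot)v$ (with $v = u_1 - a_1$) has norm at most $(1+\beta)\|u_1 - a_1\|$ throughout, so integrating yields $d_{W^1}(x,z) \leq (1+\beta)\|u_1 - a_1\| \leq \tfrac{1+\beta}{1-\beta}\bk\, d(x,y)$. The bound for $d_{W^2}(y,z)$ follows by the symmetric argument exchanging the roles of $F^1$ and $F^2$.

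The main technical subtlety is rigorously upgrading the infinitesimal cone condition to the global graph representation used in the first step; this uses the foliation structure together with the properness ensured by the uniform cone bound. Once the graph representation is in hand, the remainder is a routine manipulation of $\beta$-Lipschitz functions together with the angular estimate \eqref{eqn:barkappa2}.
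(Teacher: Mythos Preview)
Your proposal is correct and follows essentially the same route as the paper's proof: represent each leaf as the graph of a $\beta$-Lipschitz map between the factors, obtain the intersection as the unique fixed point of the $\beta^2$-contraction $\psi_y\circ\phi_x$, and then bound the leaf distance by lifting a straight segment via the graph map $\Id+\phi_x$. The only cosmetic difference is that the paper first translates so that $(x+F^1)\cap(y+F^2)$ is the origin, which slightly streamlines the algebra; your more explicit justification of the global graph representation via path-lifting is a helpful addition but not a different idea.
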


We will consider foliations on $\TT^4$ whose lifts to $\RR^4$ satisfy the hypotheses of Lemma \ref{lem:Wlps}.  Uniqueness of the intersection point on $\TT^4$ follows from restricting to sufficiently small local leaves. We also need the following lemma, 
which compares the intrinsic distance along a leaf with the distance induced from the metric on $\TT^4$.
\begin{lem}\label{compare:dist}
Under the assumptions of Lemma \ref{lem:Wlps}, suppose that $x, y$ are points belonging to the same local leaf of $W\in \{W^1, W^2\}$. Then
\[
d(x,y) \leq d_W(x,y) \leq (1+\beta)^2 d(x,y).
\]
\end{lem}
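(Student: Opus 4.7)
The first inequality $d(x,y) \leq d_W(x,y)$ is immediate from the definition of the intrinsic path-length distance on a $C^1$ submanifold of Euclidean space: any connecting path on the leaf has length at least the ambient chord distance between its endpoints.

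For the upper bound, my plan is to represent the local leaf of $W$ through $x$ as a Lipschitz graph over $F^1$. The cone condition $T_z W \subset C_\beta(F^1, F^2)$ ensures that the projection $\pi_1 \colon F^1 \oplus F^2 \to F^1$ along $F^2$ has trivial kernel on each tangent space $T_z W$ (any nonzero tangent vector has nontrivial $F^1$-component), so by the inverse function theorem the leaf is locally the image of a $C^1$ map $\Psi(v) = v + \phi(v)$ with $\phi \colon U \subset F^1 \to F^2$ satisfying $\|D\phi(v)\| \leq \beta$; in particular $\phi$ is $\beta$-Lipschitz. Writing $x = v_0 + \phi(v_0)$ and $y = v_1 + \phi(v_1)$, I would lift the straight segment $\alpha(t) = (1-t)v_0 + t v_1$ in $F^1$ to the curve $\gamma(t) = \alpha(t) + \phi(\alpha(t))$ on $W$. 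Its derivative $\gamma'(t) = (I + D\phi(\alpha(t)))(v_1 - v_0)$ has norm at most $(1+\beta)\|v_1 - v_0\|$, so
\[
d_W(x,y) \leq \mathrm{length}(\gamma) \leq (1+\beta)\|v_1 - v_0\|.
\]

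It then remains to show $\|v_1 - v_0\| \leq (1+\beta)\, d(x,y)$. Writing $y - x = (v_1 - v_0) + (\phi(v_1) - \phi(v_0))$, with the second summand lying in $F^2$ and of norm at most $\beta\|v_1 - v_0\|$, I would combine the Lipschitz bound on $\phi$ with a careful triangle-inequality argument (supplemented, where needed, by the angle estimate \eqref{eqn:barkappa2} already invoked in Lemma \ref{lem:Wlps}) to extract the required bound. Multiplying with the previous inequality gives $d_W(x,y) \leq (1+\beta)^2 d(x,y)$.

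The main obstacle I expect is obtaining the clean constant $(1+\beta)^2$ in this last step rather than the slightly weaker $\tfrac{1+\beta}{1-\beta}$ that a naive triangle inequality produces: the sharp bound requires using not just the norm control on $\phi$ but also the geometric fact that $\phi(v_1) - \phi(v_0)$ lies in the fixed complement $F^2$, so that its contribution to $\|y - x\|$ is controlled by direction and not merely by magnitude. Once that bookkeeping is done, the two estimates chain together to give the stated inequality.
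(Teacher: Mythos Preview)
Your approach is essentially the same as the paper's: write the local leaf as a graph $v \mapsto v + \phi(v)$ over $F^1$ with $\|D\phi\| \leq \beta$, lift the straight segment joining the projections $v_0, v_1$ back to the leaf, and use that the graph map has derivative norm at most $1+\beta$. The paper's proof is a two-line sketch carrying out exactly this.

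You are right that the second factor of $(1+\beta)$---the bound $\|v_1 - v_0\| \leq (1+\beta)\,d(x,y)$---is the sticking point, and the paper simply asserts it without further argument. In fact, for a general (non-orthogonal) direct sum $F^1 \oplus F^2$ the honest estimate from the triangle inequality is $\|v_1 - v_0\| \leq (1-\beta)^{-1} d(x,y)$, yielding $d_W(x,y) \leq \frac{1+\beta}{1-\beta}\,d(x,y)$, exactly as you anticipate; exploiting only that $\phi(v_1)-\phi(v_0)\in F^2$ does not recover the factor $(1+\beta)$ without also bringing in the angle $\measuredangle(F^1,F^2)$. So neither your sketch nor the paper's actually pins down the stated constant. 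This is harmless for the applications: the lemma is invoked only in Lemma~\ref{lem:Wcu-span}, where all that is needed is a comparison constant less than $2$, and under the standing assumption $\beta < \tfrac13$ one has $\frac{1+\beta}{1-\beta} < 2$.
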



Lemmas \ref{lem:Wlps} and \ref{compare:dist} are proved in \S\ref{s.lemmas}.


\section{Bonatti-Viana construction and Main result}\label{sec:BV}
In~\cite{BV}, Bonatti and Viana defined a $C^1$-open class of diffeomorphisms by a list of 4 hypotheses, which ensure robust transitivity and the existence of a dominated splitting into two bundles with no invariant sub-bundles. They then gave an explicit construction of a family of diffeomorphisms satisfying these 4 hypotheses, thus showing that the Bonatti-Viana class is non-empty. We refer to this as the \emph{Bonatti-Viana construction}. The diffeomorphisms constucted this way, and their $C^1$ perturbations are the object of our study. We recall the main points of the construction, referring to \cite{BV} and~\cite{BF} for full details. In \cite{BF}, Buzzi and Fisher added some refinements to the details of the construction, allowing useful additional control which we shall assume in this paper. Let $A \in \mathrm{SL}(4, \mathbb{Z})$ with four distinct real eigenvalues 
\[
      0<\lambda_1< \lambda_2 < 1/3<3<\lambda_3< \lambda_4.
\]
The Bonatti--Viana construction yields diffeomorphisms, which we denote by $\fbv$, which are $C^0$ small, but $C^1$ large, deformations of $f_A$.   

{Recall that $3\eta$ is an expansivity constant for $\fB$. At some points in our analysis (see \S \ref{lps} and \S \ref{sec:bv-expansivity}), we  require that $\eta$ is not too large so that calculations at scales involving $\eta$ are local.
We fix $0< \rho < 3\eta$ and carry out a perturbation in $\rho$-neighbourhoods of $q$ and $q'$. Around $q$ we will deform in the weak stable direction and around $q'$ in the weak unstable direction.  The third fixed point will be left unperturbed to ensure robust transitivity.

Let $F^s,F^u$ be the two-dimensional subspaces of $\RR^d$ corresponding to contracting and expanding eigenvalues of $A$, respectively.  Let $\kappa = 2\bk(F^s,F^u)$, where $\bk$ is as in \eqref{eqn:barkappa}. 

Fixing $\rho>0$, we consider the scales $\rho' = 5\rho$ and $\rho'' = \sixtythree\kappa\rho'$.  We assume that $\rho$ is sufficiently small that $\rho'' < 6\eta$. The role of these scales is as follows:
\begin{enumerate}
\item
 The perturbation takes place in the balls $B(q,\rho)$ and $B(q',\rho)$  -- outside of these balls the new map is identically equal to $\fB$;
\item The scale $\rho'$ is chosen so at this scale the center-stable (resp. center-unstable) leaves are contracted by $g$ (resp. $g^{-1}$);
\item The scale $\rho''$ is the distance that points need to be away from $q$ and $q'$ to guarantee uniform contraction/expansion estimates at a large enough scale to verify the hypotheses of Theorem \ref{t.generalBV}.
\end{enumerate}

\begin{figure}[htb]
\begin{center}
\psfrag{A}{$f_A$}
\psfrag{B}{$ $}
\psfrag{q}{$q$}
\psfrag{r}{$q_1$}
\psfrag{s}{$q$}
\psfrag{t}{$q_2$}
\psfrag{u}{$q_1$}
\psfrag{v}{$q$}
\psfrag{w}{$q_2$}
\psfrag{C}{$\hat{f}$}
\includegraphics[width=.9\textwidth]{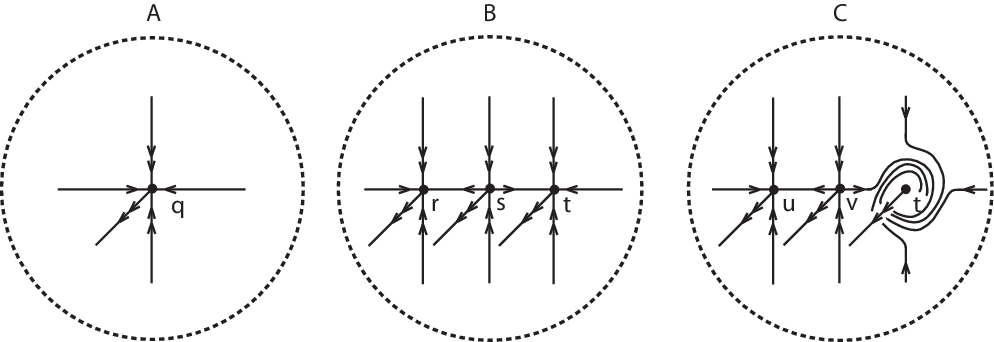}
\caption{Bonatti--Viana construction}\label{f.bv}
\end{center}
\end{figure}

The deformation around the points $q$ and $q'$ is done in two steps, illustrated in Figure \ref{f.bv}.  We describe the deformation around $q$.  
First, we perform a deformation around $q$ in the stable direction $\lambda_2$ as follows. 
Inside $B(q,\rho)$, the fixed point $q$ undergoes a pitchfork bifurcation in the direction corresponding to $\lambda_2$.   

The stable index of $q$ changes from 2 to 1 and two new fixed points $q_1$ and $q_2$ are created. The second step is to deform the diffeomorphism  in a neighborhood of $q_2$ so that the contracting eigenvalues become complex; see Figure~\ref{f.bv}.

Note the creation of fixed points with different indices prevents
the topologically transitive map from being Anosov.
These non-real eigenvalues also forbid the existence of a one-dimensional invariant sub-bundle inside $E^{cs}$.  So the resulting map $\hat f$ has a splitting $E^{cs}\oplus E^{cu}$.

To finish the construction take the deformation just made on $\fB$ near $q$ and repeat it so that the map is equal to $\hat f^{-1}$ in the neighborhood of  $q'$. We obtain a map $\fbv$ that is robustly transitive,  not partially hyperbolic, and has a dominated splitting $T\mathbb T^4=E^{cs}\oplus E^{cu}$ with 
$\dim E^{cs}=\dim E^{cu}=2$ (see~\cite{BV} for proofs of these facts).  

We fix a small $\beta$ and we can ensure in the construction that  $E^{cs} \subset C_{\beta/2}(F^{s},F^{u})$ and $E^{cu}\subset C_{\beta/2}(F^{u},F^{s})$. To simplify computations, we assume explicit upper bounds on $\beta$ at a couple of points in the proof (see e.g. proof of Lemmas \ref{lem:large-scale}, \ref{bowen-ballsBV}, and \ref{lem:Wcu-span}). We may assume that $\beta <1/3$.

Let $C=C(\fB)$ be the constant provided by Lemma \ref{shadowinglemma}. Outside $B(q,\rho) \cup B(q', \rho)$, the maps $\fbv$ and $\fB$ are identical, and we can carry out the construction so there exists a constant $K$ so that both $\fB(B(q,\rho)) \subset B(q, K \rho)$ and $\fbv(B(q,\rho)) \subset B(q, K \rho)$, and similarly for $q'$. Thus the $C^0$ distance between $\fbv$ and $\fB$ is at most $K\rho$. In particular, by choosing $\rho$ small, we can ensure that  $d_{C^0}(\fbv, \fB) < \eta/C$. Thus, we can apply Proposition \ref{pressuredrop} to $\fbv$, or to a perturbation of $\fbv$.

We now consider diffeomorphisms $g$ in a $C^1$ neighborhood of $\fbv$.  We recall results from \cite{BF} on integrability of foliations. We assume that the construction of $\fbv$ is  carried out so that the resulting deformation \emph{respects the domination of }$\fB$. This property is defined in \cite[Definition 2.3]{BF}, and verified for $\fbv$ in \cite[\S 7]{BF}. This is a $C^1$ robust condition which, by Theorem 3.1 of \cite{BF}, ensures integrability of the dominated splitting. Thus,  for $g\in\mathrm{Diff}(\mathbb{T}^4)$ sufficiently close to $\fbv$, there are invariant foliations tangent to $E^{cs}_g$ and $E^{cu}_g$ respectively. Furthermore, the argument of Lemma 6.1 and 6.2 of \cite{BV} shows that each leaf of each foliation is dense in the torus. The existence of foliations was not known when \cite{BV} was written, but these arguments apply with only minor modification now that the existence result has been established by \cite{BF}.   
Thus, we can consider a $C^1$-neighborhood $\VVV$ of $\fbv$ such that the following is true for every $g\in \VVV(\fbv)$:
\begin{itemize}
\item $d_{C^0}(g, \fB) < \eta/C$;
\item $g$ has a dominated splitting $T\TT^4 = E^{cs}_g \oplus E^{cu}_g$, with $\dim E^{cs}_g = \dim E^{cu}_g = 2$ and $E_g^{cs}, E_g^{cu}$ contained in $C_\beta(F^s,F^u)$ and $C_\beta(F^u,F^s)$ respectively; 
\item The distributions $E^{cs}_g$, $E^{cu}_g$ integrate to foliations $W^{cs}_g$, $W^{cu}_g$.
\item Each of the leaves $W^{cs}_g(x)$ and $W^{cu}_g(x)$ is dense for every $x\in \TT^4$.
\end{itemize}
Given $g\in \VVV$, we define the quantities
\[
\begin{aligned}
\lambda_{s}(g) &= \sup \{ \|Dg|_{E^{cs}(x)} \| : x\in \TT^4 \setminus B(q,\rho) \}, \\
\lambda_{u}(g) &= \inf \{ \|Dg|_{E^{cu}(x)}^{-1} \|^{-1} : x\in \TT^4 \setminus B(q',\rho) \}, \\
\lambda_{cs}(g) &= \sup \{ \|Dg|_{E^{cs}(x)} \| : x\in \TT^4\}, \\
\lambda_{cu}(g) &= \inf \{ \|Dg|_{E^{cu}(x)}^{-1} \|^{-1} : x\in \TT^4\}, \\
\lambda(g)&=\max\{\lambda_{cs}(g), \lambda_{cu}(g)^{-1}\}.
\end{aligned}
\]
Note that by the construction of $\fbv$ we have
\begin{align*}
\lambda_s(\fbv) &< 1 < \lambda_{cs}(\fbv), \\
\lambda_{cu}(\fbv) &<1 < \lambda_u(\fbv),
\end{align*}
and we can carry out the construction so that $\lambda(\fbv)$ is arbitrarily close to 1. By continuity, these inequalities hold for $C^1$-perturbations of $\fbv$. { We also have  $\lambda_{s}(g)$ and $\lambda_{u}(g)$ arbitrarily close to the corresponding values for $\fB$.} 
 We let
\begin{equation}\label{def:gamma}
\gamma(g)=\max \left\{ \frac{\log \lambda_{cs}(g)}{\log \lambda_{cs}(g) - \log \lambda_s(g)}, \frac{\log \lambda_{cu }(g)}{\log \lambda_{cu}(g) - \log \lambda_u(g)}. \right\}
\end{equation}
Note that $\gamma(g) \to 0$ as $\lambda(g) \to 1$ (as long as $\lambda_s(g), \lambda_u(g) \not \to 1$).
A simple calculation shows that for any $r>\gamma$, we have
\begin{align}\label{eqn:gamma1}
\lambda_{cs}^{1-r}\lambda_s^r &< 1, \\
\label{eqn:gamma2}
\lambda_{cu}^{1-r}\lambda_u^r &> 1, 
\end{align}
so that in particular, writing
\begin{equation}\label{eqn:theta-r-bv}
\theta_r(g) = \max(\lambda_{cs}^{1-r}\lambda_s^r, \lambda_{cu}^{-(1-r)}\lambda_u^{-r}),
\end{equation}
we have $\theta_r(g)<1$ for all $r>\gamma(g)$.  For notational convenience, we write
\begin{equation}\label{eqn:Q}
Q = B(q,\rho''+\rho) \cup B(q',\rho''+\rho).
\end{equation}
We now state the precise version of Theorem \ref{main2}.

\begin{thm}\label{t.BV}
Given $g\in \VVV(\fbv)$ as above, let $\gamma = \gamma(g)$, $\lambda=\lambda(g)$.  Let 
$\varphi\colon \mathbb{T}^4\to \mathbb{R}$ be H\"older continuous, and set
 $V = \Var(\ph, \sixtythree\rho')$.
Let
\begin{equation}\label{def:phi}
\Phi(\varphi; g)= 6\log \lambda + (1-\gamma) \sup_{Q}\ph + \gamma( \sup_{\TT^4}\ph + \log L + h) +H(2\gamma)  + V.
\end{equation}
If $\Phi(\varphi; g) < P(\ph; g)$, then $\varphi$ has a unique equilibrium state. 
\end{thm}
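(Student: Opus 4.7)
The plan is to apply Theorem \ref{t.generalBV} with a decomposition tailored to the Bonatti--Viana construction. Fix $r$ slightly larger than $\gamma(g)$, so by \eqref{eqn:gamma1}--\eqref{eqn:gamma2} we have $\theta_r(g) < 1$: orbit segments that spend at most an $r$-fraction of their iterates in $B(q,\rho) \cup B(q',\rho)$ enjoy uniform $\theta_r$-contraction in $E^{cs}_g$ (forward) and uniform $\theta_r$-expansion in $E^{cu}_g$ (backward). Define $\GGG$ as the collection of $(x,n)$ for which both $S_n^g\chi_q(x) \geq rn$ and the analogous backward condition $S_n^{g^{-1}}\chi_{q'}(g^{n-1}x) \geq rn$ hold. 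The decomposition is obtained in the standard way: let $p(x,n)$ be the maximal initial index for which $(x,p) \in \CCC(q,r;g)$, and $s(x,n)$ the analogous terminal index for $g^{-1}$ and $q'$; then $\PPP \subset \CCC(q,r;g)$, $\SSS$ is the corresponding backward analogue, and $(g^p x, n-p-s) \in \GGG$ by construction.

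For the pressure gap hypothesis \ref{c.gap}, applying Theorem \ref{coreestimateallscales} to $\PPP$ and (after reversing time) to $\SSS$, then using Lemma \ref{smallerscales} to pass from scale $6\eta$ to scale $\eps = 3\rho'$, yields
\begin{equation*}
P(\PPP \cup \SSS, \ph, \eps; g) \leq (1-r)\sup_Q \ph + r(\sup_{\TT^4}\ph + h + \log L) + H(2r) + h_g^*(6\eta).
\end{equation*}
Adding $\Var(\ph, \twentyone\eps) \leq V$ and letting $r \to \gamma^+$, this matches $\Phi(\ph;g)$ provided one establishes the tail entropy bound $h_g^*(6\eta) \leq 6\log\lambda$. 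The latter should follow from a volume-growth argument in Bowen balls: since the expansion and contraction rates of $g$ are controlled by $\lambda$ in the central directions and $g$ agrees with $\fB$ off the perturbation region, the entropy generated at subexpansive scales is controlled by $\log\lambda$ times a dimension-dependent constant.

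The Bowen property on $\GGG$ at scale $\twentyone\eps$ is obtained from the local product structure of Lemma \ref{lem:Wlps} applied to $W^{cs}_g, W^{cu}_g$, whose tangent bundles lie in the cone fields $C_\beta(F^s,F^u)$ and $C_\beta(F^u,F^s)$. Given $(x,n) \in \GGG$ and $y \in B_n(x, \twentyone\eps)$, form the bracket point $z \in W^{cs}_{\mathrm{loc}}(x) \cap W^{cu}_{\mathrm{loc}}(y)$; uniform $\theta_r$-hyperbolicity along $\GGG$ then forces the iterates along the $(x,z)$ and $(z,y)$ legs to shrink geometrically in the respective foliations, and H\"older regularity of $\ph$ sums to a uniform constant independent of $n$. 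Meanwhile, condition \ref{E} holds because points whose asymptotic density of iterates outside $B(q,\rho)$ (forward) and $B(q',\rho)$ (backward) both exceed $r$ enjoy $\theta_r$-hyperbolic behavior and hence have trivial bi-infinite Bowen balls; Theorem \ref{expansivityestimate} then bounds $\Pexp(\ph, \twentyone\eps)$ by the same expression as $P(\PPP \cup \SSS, \ph)$.

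The main obstacle is verifying specification for $\GGG^M$ at scale $\eps$. The plan is: given $(x_1,n_1),\dots,(x_k,n_k) \in \GGG^M$, use density of $W^{cu}_g$-leaves in $\TT^4$ (from \cite{BF}) to find, for each $j$, a point near $g^{n_j}x_j$ whose unstable leaf approaches $W^{cs}_{\mathrm{loc}}(x_{j+1})$ within a transition time $\tau = \tau(M,\eps)$; local product structure then produces the required intersection, and $\theta_r$-hyperbolicity along the $\GGG$-portion of each segment (whose $\PPP,\SSS$-prefixes/suffixes have length $\leq M$) forces shadowing at scale $\eps$. The dependence of $\tau$ on $M$ reflects the bounded duration of the $\PPP,\SSS$-tails that must be absorbed before uniform contraction/expansion takes over. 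With all four hypotheses of Theorem \ref{t.generalBV} verified, uniqueness of the equilibrium state follows.
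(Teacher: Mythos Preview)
Your overall strategy is the same as the paper's and is correct in outline, but there is one real imprecision that would break two of the verifications as written. Your $\GGG$ only asks that the \emph{full-length} Birkhoff averages $S_n^g\chi_q(x)$ and $S_n^{g^{-1}}\chi_{q'}(g^{n-1}x)$ exceed $rn$. The paper instead requires the prefix/suffix condition
\[
\tfrac{1}{i}S_i\chi(x)\geq r \quad\text{and}\quad \tfrac{1}{i}S_i\chi'(g^{n-i}x)\geq r \qquad\text{for every }0\leq i\leq n,
\]
with $\chi,\chi'$ the indicators of the complements of the \emph{enlarged} balls $B(q,\rho''+\rho)$, $B(q',\rho''+\rho)$ (not $B(q,\rho)$). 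Both refinements matter. First, the Bowen property at scale $\sixtythree\rho'$ needs $\|Dg^k|_{E^{cs}(y)}\|\leq\theta_r^k$ for \emph{every} intermediate $0\leq k\leq n$ and every $y\in B_n(x,\rho'')$; this follows only if every prefix $(x,k)$ already has good density outside $B(q,\rho''+\rho)$, so that the shifted orbit $(y,k)$ has good density outside $B(q,\rho)$. With only a full-length condition there is no control at intermediate times and your geometric-decay step in the Bowen argument does not go through. Second, the same all-prefix/all-suffix estimates are what drive the iterative shadowing construction for specification on $\GGG^M$.

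The fix is straightforward: adopt the stronger definition of $\GGG$. This is exactly what the maximal-index decomposition you describe actually produces (if $p$ is the largest index with $(x,p)\in\CCC(q,r;g)$, then maximality forces $\tfrac{1}{\ell}S_\ell\chi(g^px)\geq r$ for every $\ell$), so no other part of your argument changes. With that correction, your plan for the tail-entropy bound, the pressure gap via Theorem~\ref{coreestimateallscales}, condition~\ref{E} via Theorem~\ref{expansivityestimate}, and the specification argument all line up with the paper's proof.
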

The $C^1$-open set $\bigcup_{\fbv \in \UUU_{\lambda, \rho}} \VVV(\fbv)$ gives a large class of Bonatti-Viana diffeomorphisms for which this theorem can be used to investigate uniqueness of equilibrium states.

We remark that in the uniformly hyperbolic setting, every H\"older potential is cohomologous to a potential with $\sup \ph < P(\ph)$, which is equivalent to the condition that every equilibrium state for this potential have positive entropy; see \cite[Theorem 6.1]{CFT_Mane}.  Conversely, this condition can sometimes be used beyond uniform hyperbolicity to guarantee that equilibrium states ignore the `bad' part of the system and are unique \cite{DKU}.  Our condition in Theorem \ref{t.BV} is in this spirit; one should not expect to obtain uniqueness for every H\"older potential, so for a result like this, some restriction on the class of potentials is necessary.


\section{Corollaries of Theorem \ref{t.BV}} \label{s:corproof}
Before we prove Theorem \ref{t.BV}, we show how to use it to obtain the two corollaries mentioned in the introduction. 

\begin{cor}\label{t.BV-range}  
Let $\VVV(\fbv)\subset \Diff(\TT^4)$ be as above, and suppose $g\in \VVV(\fbv)$ is such that for $L=L(\fB, \eta)$, $h=\htop(\fB)$, $\gamma=\gamma(g)$, and $\lambda=\lambda(g)$ we have
\begin{equation} \label{hestimate}
6\log \lambda + \gamma(\log L + h) +H(2\gamma) < h.
\end{equation}
Let $V(\varphi) = \Var(\ph, \sixtythree\rho') + \Var(\ph, \eta'),$ where $\eta' = C(\fB)d_{C_0}(\fB, g)$. Then writing 
$$D = h - 6\log\lambda - \gamma(\log L + h) - H(2\gamma) > 0,$$
every H\"older continuous potential $\varphi$ with the 
bounded range hypothesis 
$\sup \varphi - \inf \varphi +V(\varphi) <D$
has a unique equilibrium state. In particular, \eqref{hestimate} is a sufficient criterion for $g \in \VVV(\fbv)$ to have a unique MME. 
\end{cor} 

\begin{proof}
If $\sup \ph - \inf \ph +V(\ph)< D$ 
, then
\begin{align*}
6 \log \lambda + (1-\gamma) &\sup_{Q}\ph + \gamma( \sup_{\TT^4}\ph + h + \log L) +H(2\gamma) +V \\
&= (1-\gamma) \sup_{Q}\ph + \gamma(\sup_{\TT^4}\ph) + \htop(\fB) +V - D \\
&\leq \sup_{\TT^4} \ph + \htop(\fB) +V - D\\
&< \inf\ph + \htop(\fB) - \Var(\ph, \eta') \\ 
&\leq P(\ph; \fB)- \Var(\ph, \eta')\leq P(\varphi; g).
\end{align*}
Thus Theorem \ref{t.BV} applies.
\end{proof} 
Since $V(\varphi)\leq 2(\sup \varphi - \inf \varphi)$, we could remove the variance term in our bounded range hypothesis by asking that $3(\sup \varphi - \inf \varphi)  <D$. 


\begin{cor}\label{cor1.2}
Let $\varphi\colon \mathbb{T}^4\to \mathbb{R}$ be a H\"older continuous potential. In any $C^0$-neighborhood of $\fB$, there exists a $C^1$-open subset $\mathcal{V}_0 \subset \Diff(\mathbb{T}^4)$ containing diffeomorphisms from the Bonatti--Viana family such that for every $g\in \mathcal{V}_0$, $g$ has a dominated splitting and is not partially hyperbolic and $(\mathbb{T}^4,g,\varphi)$ has a unique equilibrium state.
\end{cor}

\begin{proof}
A diffeomorphism $\fbv\in \UUU_{\lambda, \rho}$ can be found in any $C^0$ neighbourhood of $\fB$ by taking $\rho$ to be small. Let $\VVV= \VVV(\fbv)$, and $\VVV_0$ be the set of $g\in \VVV$ such that $\Phi(g; \varphi) < P(\ph; g)$.  Note that $\Phi(g; \varphi)$ is continuous under $C^1$ perturbation of $g$, so $\VVV_0$ is $C^1$-open.  It only remains to show that $\VVV_0$ is non-empty when $\rho$ and $\log \lambda$ are sufficiently small.

Let $\eta' = C(\fB) d_{C_0}(g, \fB)$. Recall from Proposition \ref{pressuredrop}\ref{Pg-geq} that $P(\varphi; g)\geq P(\ph; \fB) - \Var(\ph, \eta')$.  Moreover, we have
\[
(1-\gamma) \sup_{Q}\ph  \leq \max\{\ph(q), \ph(q')\}+ \Var(\ph, 2 \rho'').
\]
Thus to prove $\Phi(\ph; g) < P(\ph; g)$  it suffices to verify that 
\[
\max\{\ph(q), \ph(q')\} + 
6\log \lambda  + \gamma( \sup_{\TT^4}\ph + h + \log L) + H(\gamma)+ V' < P(\ph; f_A),
\]
where $V' = V + \Var(\ph, 2 \rho'')+ \Var(\ph, \eta')$. The scales which appear in the $V'$ term all tend to $0$ as $\rho$ tends to $0$. Given a hyperbolic toral automorphism $\fB$ and a H\"older potential $\ph\colon \TT^4\to \RR$, it is well known that $\ph$ has a unique equilibrium state with the Gibbs property. For a fixed point $p$, the Dirac measure $\delta_p$ clearly does not have the Gibbs property, so cannot be an equilibrium state for $\ph$, and thus
\[
\ph(p) = h_{\delta_p}(\fB) + \int \varphi\, d\delta_p < P(\ph; \fB).
\]
Thus, $\max\{\ph(q), \ph(q')\}< P(\ph; \fB)$. By choosing $\log \lambda$ and $\rho$ small, we can ensure that $\gamma$ and $V'$ are small enough so that the required inequality holds. Thus, $\mathcal V_0$ is non-empty. 
\end{proof}

\section{Proof of the Main Result} \label{s:mainproof}
We now build up a proof of our main result Theorem \ref{t.BV}, which is the more precise version of Theorem \ref{main2}.

\subsection{Local product structure}\label{lps}
We now establish local product structure at scale $6 \eta$ for maps $g \in \VVV$. The assumptions that allow us to do this are that $E^\sigma_g \subset C_\beta^\sigma$ for $\sigma\in \{cu,cs\}$ and that $\beta, \eta$ are not too large.

\begin{lem}\label{lem:lps-mane}
Every $g\in \VVV$ has a local product structure for $W^{cs}_g,W^{cu}_g$ at scale $6\eta$ with constant $\kappa = 2\bk(F^{s},F^u)$.
\end{lem}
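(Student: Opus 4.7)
The plan is to lift everything to the universal cover $\RR^4$, apply the Euclidean local product structure result from Lemma \ref{lem:Wlps}, and then push the conclusion back down to $\TT^4$ using the covering projection.

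First I would fix $g \in \VVV_0$ and let $\pi\colon \RR^4 \to \TT^4$ be the covering map. Lift $g$ to $\tilde g\colon \RR^4 \to \RR^4$ and lift the foliations $W^{cs}_g, W^{cu}_g$ to foliations $\tilde W^{cs}, \tilde W^{cu}$ of $\RR^4$. Because the lifted leaves have the same tangent distributions as the original foliations (pulled back via $\pi$), the cone conditions $E^{cs}_g \subset C_\beta(F^s,F^u)$ and $E^{cu}_g \subset C_\beta(F^u,F^s)$ guaranteed by the choice of $\VVV_0$ pass to the lifts. Thus the hypotheses of Lemma \ref{lem:Wlps} are satisfied on $\RR^4 = F^s \oplus F^u$.

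Next, given $x,y \in \TT^4$ with $\epsilon := d(x,y) < 6\eta$, choose lifts $\tilde x, \tilde y \in \RR^4$ realizing this distance. Lemma \ref{lem:Wlps} produces a unique $\tilde z \in \tilde W^{cs}(\tilde x) \cap \tilde W^{cu}(\tilde y)$ with
\[
\max\{d_{\tilde W^{cs}}(\tilde x, \tilde z), d_{\tilde W^{cu}}(\tilde y, \tilde z)\} \leq \frac{1+\beta}{1-\beta} \bk\, \epsilon.
\]
Since the construction ensures $\beta < 1/3$, we have $(1+\beta)/(1-\beta) < 2$, and the upper bound becomes $2\bk\, \epsilon = \kappa\epsilon$. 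Setting $z := \pi(\tilde z)$ yields an intersection point in $W^{cs}_{g,\kappa\epsilon}(x) \cap W^{cu}_{g,\kappa\epsilon}(y)$, which gives existence.

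For uniqueness, suppose $z' \in W^{cs}_{g,\kappa\epsilon}(x) \cap W^{cu}_{g,\kappa\epsilon}(y)$. A path of length $\leq \kappa\epsilon$ along $W^{cs}_g$ from $x$ to $z'$ lifts to a path starting at $\tilde x$ and ending at some lift $\tilde z' \in \tilde W^{cs}(\tilde x)$; similarly the $W^{cu}_g$-path from $y$ to $z'$ lifts starting at $\tilde y$ to a point $\tilde z'' \in \tilde W^{cu}(\tilde y)$. By Lemma \ref{compare:dist}, the ambient distances satisfy $d(\tilde x, \tilde z'), d(\tilde y, \tilde z'') \leq (1+\beta)^2 \kappa \epsilon < 6(1+\beta)^2 \kappa \eta$. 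Choosing $\eta$ small enough (as in the standing assumption preceding the construction) that $6(1+\beta)^2 \kappa \eta$ plus $\epsilon$ is less than the injectivity radius $1/2$ of $\TT^4$, both $\tilde z'$ and $\tilde z''$ lie in a ball of diameter less than $1$ around $\tilde x$, so the deck translation $\tilde z' - \tilde z''$ must be zero. Hence $\tilde z' = \tilde z''$ is an intersection point of $\tilde W^{cs}(\tilde x)$ and $\tilde W^{cu}(\tilde y)$, so by uniqueness in Lemma \ref{lem:Wlps} it coincides with $\tilde z$, giving $z' = z$.

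The only step that requires real care is the uniqueness argument, since the Euclidean result produces a global intersection on $\RR^4$ but on $\TT^4$ we only have intersection of the \emph{local} leaves of radius $\kappa\epsilon$. The main obstacle is ensuring that the scales $\epsilon$, $\kappa\epsilon$, and $(1+\beta)^2 \kappa \epsilon$ remain below the injectivity radius, which is exactly why the restriction that $\eta$ be not too large was inserted in \S\ref{sec:BV}; everything else is a direct application of Lemmas \ref{lem:Wlps} and \ref{compare:dist} after the cone hypothesis is verified on the universal cover.
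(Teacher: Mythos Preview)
Your proof is correct and follows essentially the same route as the paper: lift to $\RR^4$, invoke Lemma \ref{lem:Wlps} with the cone hypothesis coming from the definition of $\VVV_0$, use $\beta<1/3$ to convert $(1+\beta)/(1-\beta)\bk$ into $2\bk=\kappa$, and then argue that uniqueness on the torus follows from the smallness of $\eta$ relative to the injectivity radius. Your uniqueness argument is in fact more explicit than the paper's, which simply asserts that for $\eta$ not too large the projection of $\widetilde W^{cs}_{6\eta\kappa}(\tilde x)\cap \widetilde W^{cu}_{6\eta\kappa}(\tilde y)$ coincides with $W^{cs}_{6\eta\kappa}(x)\cap W^{cu}_{6\eta\kappa}(y)$; you spell out the two-lift-and-compare step. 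One small remark: when you invoke Lemma \ref{compare:dist} you only need the inequality $d\leq d_W$, which already gives $d(\tilde x,\tilde z')\leq \kappa\eps$ without the extra factor $(1+\beta)^2$; your bound is correct but looser than necessary.
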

\begin{proof}
Let $\widetilde W^{cs}$ and $\widetilde W^{cu}$ be the lifts of $W^{cs}_g,W^{cu}_g$ to $\RR^4$.  Given $x,y\in \TT^4$ with $\eps := d(x,y) < 6\eta$, let $\tilde x,\tilde y\in \RR^4$ be lifts of $x,y$ with $\eps=d(\tilde x,\tilde y)<6\eta$.  By Lemma \ref{lem:Wlps} the intersection $\widetilde W^{cs}(x) \cap \widetilde W^{cu}(y)$ has a unique point $\tilde z$, which projects to $z\in \TT^4$.  Moreover, the leaf distances between $\tilde x,\tilde z$ and $\tilde y, \tilde z$ are at most $(\frac{1+\beta}{1-\beta}) \bk(F^{s},F^u)\eps$.  Since $\beta<\frac 13$ this is less than $2\bk(F^{s},F^u) \epsilon$, so $z$ is in the intersection of the local leaves $(W^{cs}_g)_{\kappa \eps}(x)$ and $ (W^u_g)_{\kappa \eps}(x)$.

{ By choosing $\eta$ not too large, we can ensure that $6 \eta \kappa$ is not too large relative to the diameter of $\TT^4$, so that the projection of $\widetilde W^{cs}_{6 \eta \kappa}(x) \cap \widetilde W^{cu}_{6 \eta \kappa}(y)$ coincides with $W^{cs}_{6 \eta \kappa}(x) \cap W^{cu}_{6 \eta \kappa}(y)$. Thus,  $z$ is the only point in this intersection.}
\end{proof}
\subsection{Specification}\label{sec:spec}

We must control the size of local leaves of $W^{cs}, W^{cu}$ under iteration, and the time to transition from one orbit to another. We use the following fact, which we prove in \S\ref{s.lemmas}.

\begin{lem}\label{lem:WcuWcs}
For every $\delta>0$ there is $R>0$ such that for all $x,y\in \TT^4$, we have
$
W^{cu}_R (x) \cap W^{cs}_\delta (y) \neq \emptyset.
$
\end{lem}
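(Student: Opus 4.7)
My plan is to combine the density of individual center-unstable leaves (given in the description of $\VVV_0$) with the local product structure at scale $6\eta$ established in Lemma \ref{lem:lps-mane}, using compactness of $\TT^4$ to upgrade pointwise density into a uniform quantitative statement.

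First, I fix $\eps>0$ small enough that $\kappa\eps < \delta$ and $\eps < 6\eta$, where $\kappa = 2\bk(F^s,F^u)$ is the local product constant. With this choice of $\eps$, whenever two points $z,y\in\TT^4$ satisfy $d(z,y)<\eps$, Lemma \ref{lem:lps-mane} produces a unique point in $W^{cu}_{\kappa\eps}(z)\cap W^{cs}_{\kappa\eps}(y) \subset W^{cu}_{\delta}(z)\cap W^{cs}_{\delta}(y)$.

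The crux of the argument is to show that there is a single $R_0>0$ such that $W^{cu}_{R_0}(x)$ is $\eps$-dense in $\TT^4$ for \emph{every} $x\in\TT^4$. For a fixed $x_0$, density of the leaf $W^{cu}(x_0)$ in $\TT^4$ provides some $R_{x_0}>0$ for which $W^{cu}_{R_{x_0}}(x_0)$ is $(\eps/2)$-dense. Continuous dependence of the center-unstable foliation on the basepoint --- which follows from continuity of the distribution $E^{cu}_g$ together with unique integrability --- then gives a neighborhood $U_{x_0}$ of $x_0$ in which the local plaques $W^{cu}_{R_{x_0}+1}(x')$ come within $\eps/2$ of every point of $W^{cu}_{R_{x_0}}(x_0)$, hence are $\eps$-dense. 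Extracting a finite subcover $\{U_{x_1},\dots,U_{x_n}\}$ of $\TT^4$ and setting $R_0 := \max_{i}(R_{x_i}+1)$ yields the desired uniform constant.

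Given arbitrary $x,y\in\TT^4$, uniform $\eps$-density of $W^{cu}_{R_0}(x)$ provides $z\in W^{cu}_{R_0}(x)$ with $d(z,y)<\eps$, and local product structure then produces $w \in W^{cu}_{\kappa\eps}(z)\cap W^{cs}_{\delta}(y)$. Concatenating the path from $x$ to $z$ inside $W^{cu}(x)$ with the path from $z$ to $w$ gives $w\in W^{cu}_{R_0+\kappa\eps}(x)$, so the lemma holds with $R := R_0+\delta$.

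The main obstacle is the uniformity step: converting pointwise density of each leaf into a uniform bound $R$ that works for all basepoints. This hinges on continuous dependence of the local plaques $W^{cu}_R(x)$ on $x$, which I expect to follow from continuity of $E^{cu}_g$ and the integrability result in \cite{BF}, together with the standard compactness argument sketched above.
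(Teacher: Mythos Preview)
Your proposal is correct and follows essentially the same strategy as the paper: establish uniform $\eps$-density of the center-unstable plaques $W^{cu}_R(x)$ over all $x$, then apply local product structure to conclude. The only difference is in the uniformity step: the paper isolates this as a general lemma about foliations with dense leaves and proves it via Dini's theorem (the continuous functions $\psi_R(x,y)=\operatorname{dist}(y,W_R(x))$ decrease monotonically to $0$ on the compact space $\TT^4\times\TT^4$), whereas you obtain the same conclusion through an equivalent finite-subcover argument.
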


Although the leaves $W^{cu}(x)$ are not expanding at every point, and the leaves $W^{cs}(x)$ are not contracting at every point, we nevertheless see  expansion and contraction if we look at a scale suitably large relative to $\rho$. 
More precisely, consider the quantities $\theta_{cs} = \frac 45 + \frac 15\lambda_s(g) < 1$ and $\theta_{cu} = \frac 45 + \frac 15 \lambda_u(g)^{-1} < 1$.  Let $d_{cs}$ and $d_{cu}$ be the metrics on the leaves $W^{cs}$ and $W^{cu}$.  Then we have the following result.


\begin{lem}\label{lem:large-scale}
If $x\in \TT^4$ and $y\in W^{cs}(x)$ are such that $d_{cs}(x,y) > \rho'$, then $d_{cs}(gx,gy) < \theta_{cs} d_{cs}(x,y)$.  Similarly, if $y\in W^{cu}(x)$ and $d_{cu}(x,y) > \rho'$, then $d_{cu}(g^{-1}x,g^{-1}y) < \theta_{cu} d_{cu}(x,y)$.
\end{lem}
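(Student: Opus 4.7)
The plan is to bound $d_{cs}(gx,gy)$ by the length of $g\circ\gamma$ for a carefully chosen curve $\gamma\colon[0,T]\to W^{cs}(x)$ from $x$ to $y$. Since $g$ preserves the center-stable foliation, $g\circ\gamma$ is a path on $W^{cs}(gx)$ joining $gx$ and $gy$, so $d_{cs}(gx,gy)\leq L(g\circ\gamma)$, and it suffices to show $L(g\circ\gamma)<\theta_{cs}T$ for some $\gamma$ with $T$ arbitrarily close to $d_{cs}(x,y)$. Working in the universal cover and using the cone condition $E^{cs}\subset C_\beta(F^s,F^u)$ with $\beta<1/3$, each lifted leaf $\widetilde W^{cs}$ is a global graph over $F^s$, so I would choose $\gamma$ to be the curve in $\widetilde W^{cs}$ whose projection to $F^s$ is the straight segment from $\pi_s(\tilde x)$ to $\pi_s(\tilde y)$, for a distance-minimizing lift $\tilde y$ of $y$.

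Partition $[0,T]=I_{\mathrm{in}}\sqcup I_{\mathrm{out}}$ with $I_{\mathrm{in}}:=\gamma^{-1}(B(q,\rho))$, and let $T_{\mathrm{in}},T_{\mathrm{out}}$ denote their measures. The definitions of $\lambda_s$ (bounding $\|Dg|_{E^{cs}}\|$ outside $B(q,\rho)$) and $\lambda_{cs}$ (bounding it everywhere) give
\[
L(g\circ\gamma)=\int_0^T\|Dg_{\gamma(t)}(\gamma'(t))\|\,dt\leq\lambda_{cs}T_{\mathrm{in}}+\lambda_sT_{\mathrm{out}}=\lambda_sT+(\lambda_{cs}-\lambda_s)T_{\mathrm{in}}.
\]
With $\theta_{cs}=\tfrac{4}{5}+\tfrac{1}{5}\lambda_s$, the desired bound $L(g\circ\gamma)<\theta_{cs}T$ reduces by elementary algebra to the geometric inequality that $T_{\mathrm{in}}/T$ lies below an explicit positive constant which stays bounded away from zero as $\lambda_{cs}\to 1$.

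The main obstacle is establishing this geometric bound on $T_{\mathrm{in}}$. Each connected component of $I_{\mathrm{in}}$ corresponds to $\gamma$ passing through a single lift $B(\tilde q+v,\rho)$, $v\in\ZZ^4$. Since $\gamma$'s projection to $F^s$ is a straight segment and each $\pi_s(B(\tilde q+v,\rho))\subset F^s$ is convex of diameter at most $\kappa\rho$, the cone bounds force each component to have $\gamma$-arc-length at most a uniform multiple of $\rho$. The number of such components is in turn bounded using that $\widetilde W^{cs}$ is a single graph over $F^s$, so that for each admissible $\pi_s(v)$ only one $\pi_u(v)$ is compatible, together with the length constraint $T>\rho'=5\rho$; this enforces $T_{\mathrm{in}}/T$ sufficiently small for the numerical choice $\rho'=5\rho$ to close the estimate.

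The second statement follows by applying the symmetric argument to $g^{-1}$ on leaves of $W^{cu}$, which are global graphs over $F^u$ by the cone condition $E^{cu}\subset C_\beta(F^u,F^s)$; the ball $B(q',\rho)$ now plays the role of $B(q,\rho)$, and the contraction rate $\lambda_u^{-1}$ of $g^{-1}$ on $E^{cu}$ outside $B(q',\rho)$ yields the analogous bound with $\theta_{cu}=\tfrac{4}{5}+\tfrac{1}{5}\lambda_u^{-1}$.
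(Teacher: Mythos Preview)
Your approach shares the paper's core idea---split the curve into the part inside $B(q,\rho)$ (where only the global bound $\lambda_{cs}$ applies) and the part outside (where the uniform contraction $\lambda_s$ applies), then show the ``inside'' proportion is small enough---but there are two genuine gaps.

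First, your reduction requires a curve $\gamma$ with $L(\gamma)=T$ arbitrarily close to $d_{cs}(x,y)$, yet you then choose $\gamma$ to be the graph over the straight segment in $F^s$. There is no reason this specific curve is a leaf geodesic, so its length $T$ may exceed $d_{cs}(x,y)$; proving $L(g\circ\gamma)<\theta_{cs}T$ then does not yield $d_{cs}(gx,gy)<\theta_{cs}d_{cs}(x,y)$. The paper simply takes $\sigma$ to be a shortest path in $W^{cs}(x)$, so $\ell(\sigma)=d_{cs}(x,y)$ exactly, and no graph structure is needed.

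Second, and more seriously, your bound on $T_{\mathrm{in}}/T$ is only asserted, not proved. You say each component of $I_{\mathrm{in}}$ has length $\lesssim\rho$ and that the number of components is controlled, but for long curves the number of components can grow with $T$ (the lifts $B(\tilde q+v,\rho)$ form a lattice of balls), so a direct global bound on $T_{\mathrm{in}}/T$ requires a separation argument you do not supply. The paper sidesteps this entirely: it chops the geodesic $\sigma$ into subarcs $\sigma_i$ with $\ell(\sigma_i)\in[\rho',2\rho']$. Since $2\rho'=10\rho$ is small relative to the torus, each $\sigma_i$ meets at most one lift of $B(q,\rho)$, and the cone condition bounds that intersection's length by $2\rho(1+\beta)$. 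Then
\[
\ell(g\sigma_i)\le(\lambda-\lambda_s)\cdot 2\rho(1+\beta)+\lambda_s\ell(\sigma_i)<4(1-\lambda_s)\rho+\lambda_s\ell(\sigma_i)\le\theta_{cs}\ell(\sigma_i),
\]
using the standing assumption $(1+\beta)\frac{\lambda-\lambda_s}{1-\lambda_s}<2$ and $\rho\le\tfrac15\ell(\sigma_i)$. Summing over $i$ gives the result. This localization-to-short-pieces is exactly the step your argument is missing.
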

\begin{proof}
We give the proof for $W^{cs}$; the proof for $W^{cu}$ is analogous.  Given a path $\sigma$ on $\TT^4$, write $\ell(\sigma)$ for the length of $\sigma$.  Let $\sigma$ be a path from $x$ to $y$ in $W^{cs}(x)$ such that $\ell(\sigma) = d_{cs}(x,y)$.  Decompose $\sigma$ as the disjoint union of paths $\sigma_i$ where $\ell(\sigma_i) \in [\rho', 2\rho']$.  Clearly it suffices to show that $\ell(g\sigma_i) < \theta_{cs} \ell(\sigma_i)$ for each $i$. {We may assume that $\beta$ is chosen not too large so that}
\begin{equation}\label{eqn:beta-lambda}
(1+\beta)\left(\frac{\lambda(g) -\lambda_s(g)}{1-\lambda_s(g)}\right) < 2
\end{equation}
{We may assume that the path $\sigma_i$ has at most one connected component that intersects $B(q,\rho)$, since $\rho$ and $\ell(\sigma_i) \leq 2\rho'$ are not large enough to wrap around the torus.}
Let $\ell_1$ be the length of this component; because this component lies in $W^{cs}(x)$, which is contained in $C_\beta(F^{s},F^u)$, we have $\ell_1 \leq 2\rho(1+\beta)$.  Let $\ell_2 = \ell(\sigma_i) - \ell_1$.  Let $v$ be a tangent vector to the curve $\sigma$ at the point $p\in \TT^4$.  If $p\in B(q,\rho)$ then we have $\|Dg(v)\| \leq \lambda(g) \|v\|$, while if $p\notin B(q,\rho)$ then $\|Dg(v)\| \leq \lambda_s(g) \|v\|$.  Thus we obtain
\begin{align*}
\ell(g\sigma_i) &\leq \lambda \ell_1 + \lambda_s \ell_2
= (\lambda - \lambda_s)\ell_1 + \lambda_s \ell(\sigma_i) \\
&\leq (\lambda - \lambda_s)2\rho(1+\beta) + \lambda_s \ell(\sigma_i)
< 4(1-\lambda_s) \rho + \lambda_s \ell(\sigma_i),
\end{align*}
where the last inequality uses \eqref{eqn:beta-lambda}. Since $\rho = \frac 15 \rho' \leq \frac 15 \ell(\sigma_i)$, this gives
\[
\ell(g\sigma_i) < \tfrac 45 (1-\lambda_s) \ell(\sigma_i) + \lambda_s \ell(\sigma_i) = \theta_{cs} \ell(\sigma_i).
\]
Summing over $i$ gives $d_{cs}(gx,gy) \leq \ell(g\sigma) < \theta_{cs} \ell(\sigma) = \theta_{cs} d_{cs}(x,y)$. The proof for $d_{cu}$ is similar.
\end{proof}

The following is an immediate consequence of Lemmas \ref{lem:large-scale} and \ref{lem:WcuWcs}.

\begin{lem}\label{lem:overflowing}
For every $R > \rho'$ and $x\in \TT^4$, we have
\begin{align*}
g(W_{R}^{cs}(x)) &\subset W_{\theta_{cs} R}^{cs}(gx), \\
g^{-1}(W_{R}^{cu}(x)) &\subset W_{\theta_{cu}^{-1} R}^{cu}(g^{-1}x).
\end{align*}
In particular, there is $\tau_0\in \NN$ such that for every $x,y\in \TT^4$ we have
\begin{equation}\label{eqn:uniform-mix}
g^{\tau_0}(W_{\rho'}^{cu}(x)) \cap W_{\rho'}^{cs}(y) \neq \emptyset.
\end{equation}
\end{lem}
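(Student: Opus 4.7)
The plan is to derive the two leaf-iterate inclusions directly from the contraction estimates in Lemma \ref{lem:large-scale}, and then combine these with Lemma \ref{lem:WcuWcs} to extract the mixing conclusion.

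For the first inclusion, I would take $y \in W^{cs}_R(x)$ with $R > \rho'$, and let $\sigma$ be a minimizing path in $W^{cs}(x)$ from $x$ to $y$, so $\ell(\sigma) = d_{cs}(x,y) \leq R$. Since the leaves of $W^{cs}_g$ are dense in $\TT^4$ (as part of the setup of $\VVV_0$), $\sigma$ can be extended inside the leaf to a path $\tilde\sigma$ with $\ell(\tilde\sigma) = R > \rho'$. The decomposition-into-pieces-of-length-$[\rho', 2\rho']$ argument from the proof of Lemma \ref{lem:large-scale} applies directly to $\tilde\sigma$ and gives $\ell(g\tilde\sigma) < \theta_{cs} R$. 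Since $g\sigma$ is a sub-path of $g\tilde\sigma$ joining $gx$ and $gy$, we obtain $d_{cs}(gx, gy) \leq \ell(g\sigma) \leq \ell(g\tilde\sigma) < \theta_{cs} R$, giving $gy \in W^{cs}_{\theta_{cs}R}(gx)$. The second inclusion follows by the symmetric argument applied to $g^{-1}$ on cu-leaves.

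For the mixing conclusion, I would apply Lemma \ref{lem:WcuWcs} with $\delta = \rho'$ to obtain $R^* > 0$ such that $W^{cu}_{R^*}(x') \cap W^{cs}_{\rho'}(y') \neq \emptyset$ for all $x', y' \in \TT^4$. Using the cu-contraction of $g^{-1}$ in Lemma \ref{lem:large-scale} iteratively, as long as the intermediate radii exceed $\rho'$, each application of $g^{-1}$ maps $W^{cu}_R(p)$ into a cu-ball of radius $\theta_{cu} R$ around $g^{-1}p$, so after $\tau_0$ steps $g^{-\tau_0}(W^{cu}_{R^*}(g^{\tau_0}x)) \subset W^{cu}_{\theta_{cu}^{\tau_0} R^*}(x)$. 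Choosing $\tau_0$ minimal such that $\theta_{cu}^{\tau_0} R^* \leq \rho'$ ensures the iteration is valid up to step $\tau_0$ and that the final ball sits inside $W^{cu}_{\rho'}(x)$. Combining with Lemma \ref{lem:WcuWcs} applied at $x' = g^{\tau_0}x$ and $y' = y$, any point $z \in W^{cu}_{R^*}(g^{\tau_0}x) \cap W^{cs}_{\rho'}(y)$ then satisfies $g^{-\tau_0}z \in W^{cu}_{\rho'}(x)$, producing the required point in $g^{\tau_0}(W^{cu}_{\rho'}(x)) \cap W^{cs}_{\rho'}(y)$. The resulting $\tau_0$ depends only on $R^*$, $\theta_{cu}$, and $\rho'$, hence is uniform in $x, y$.

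The hardest part is the short-path case in Step 1: Lemma \ref{lem:large-scale} only delivers contraction at leaf distances exceeding $\rho'$, and a naive tangent-space bound could expand by factor $\lambda > 1$ at smaller scales. The path-extension trick circumvents this by moving from a short path to a longer extension, on which the decomposition argument applies, with the image of the sub-path automatically bounded by the image of the extension. A secondary concern in the mixing argument is maintaining the threshold condition at each iterate, but this is automatic under the minimal choice of $\tau_0$, since the intermediate radii decrease strictly and remain above $\rho'$ until the final step.
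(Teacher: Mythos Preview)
Your proposal is correct and follows precisely the route the paper indicates: the paper declares the lemma an ``immediate consequence of Lemmas~\ref{lem:large-scale} and~\ref{lem:WcuWcs}'' without further argument, and you have supplied those details correctly. Your path-extension device for points at leaf-distance $\leq \rho'$ is a clean way to handle the scale threshold in Lemma~\ref{lem:large-scale}, and the iterated $g^{-1}$-contraction argument for the mixing conclusion is exactly what is intended; note also that your symmetric argument on the cu-side actually yields the sharper inclusion $g^{-1}(W_R^{cu}(x)) \subset W_{\theta_{cu} R}^{cu}(g^{-1}x)$, which both implies the stated form and is what is used downstream.
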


Let $\rho'':=  \sixtythree \kappa \rho' $, where $\kappa = 2\bk(F^{s},F^{u})$ is the constant arising in the local product structure of $W^{cs}, W^{cu}$.
Let $\chi$ be the indicator function of $\TT^4 \setminus B(q, \rho''+\rho)$ and $\chi'$ be the indicator function of $\TT^4 \setminus B(q', \rho''+\rho)$.  
The scale $\rho''+\rho$ is chosen to ensure uniform  estimates on $W^{cs}_{\rho''}$ and $W^{cu}_{\rho''}$ for points with $\chi(x)=1$ and $\chi'(x)=1$.

From now on we fix $r>\gamma(g)$, and consider the following collection of orbit segments:
\[
\GGG = \{(x,n): \tfrac{1}{i}S_i\chi(x) \geq r  \mbox { and } \tfrac{1}{i} S_i \chi' (f^{n-i} x) \geq r \text{ for all } 0\leq i\leq n \}.
\]
We will show that $\GGG^M$ has specification at scale $3\rho'$.
To get a decomposition we consider $\GGG$ together with the collections
\begin{align*}
\PPP &= \{(x,n)\in \mathbb{T}^4\times \mathbb{N}\, :\, \tfrac{1}{n}S_n\chi(x)< r \},\\
 \SSS &= \{(x,n)\in \mathbb{T}^4\times \mathbb{N}\, :\, \tfrac{1}{n}S_n\chi'(x)< r \}.
\end{align*}

\begin{figure}[htb]
\includegraphics[width=.9\textwidth]{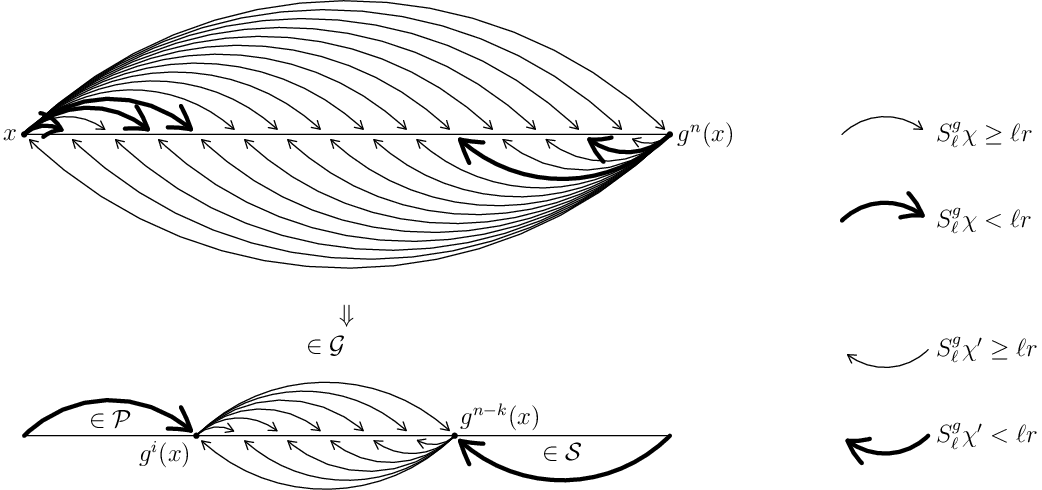}
\caption{Decomposing an orbit segment}\label{f.BVdecomp}
\end{figure}

\begin{lem} \label{BVdecomp}
The collections $\PPP,\GGG,\SSS$ form a decomposition for $g$.
\end{lem}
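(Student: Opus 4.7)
The plan is to define explicit functions $p, g, s \colon \TT^4 \times \NN \to \NN_0$ realizing the decomposition and verify the three membership conditions. For $(x,n)\in \TT^4\times \NN$, set
\[
p(x,n) = \max\{k \in \{0,1,\ldots,n\} : \tfrac{1}{k} S_k \chi(x) < r\},
\]
\[
s(x,n) = \max\{k \in \{0,1,\ldots,n-p(x,n)\} : \tfrac{1}{k} S_k \chi'(g^{n-k}x) < r\},
\]
and $g(x,n) = n - p(x,n) - s(x,n)$, with the convention that for $k=0$ the strict inequality is taken to hold vacuously so the maxes are always well-defined and the values are in $\NN_0$. The identity $n = p+g+s$ is immediate, and the memberships $(x,p)\in \PPP$ and $(g^{p+g}x, s)\in \SSS$ are precisely the defining conditions of $p$ and $s$.

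The only substantive step is to verify $(g^p x, g) \in \GGG$, which amounts to showing that for every $1\le i\le g$,
\[
\tfrac{1}{i} S_i \chi(g^p x) \ge r \qquad\text{and}\qquad \tfrac{1}{i} S_i \chi'(g^{p+g-i} x) \ge r.
\]
Both follow from maximality via the cocycle identity for Birkhoff sums. If the first inequality failed for some $1\le i\le g$, then $S_{p+i}\chi(x) = S_p\chi(x) + S_i\chi(g^p x) < r p + r i = r(p+i)$, contradicting the maximality of $p$ (noting $p+i \le p+g = n-s \le n$). The second is symmetric: if $\tfrac{1}{i} S_i \chi'(g^{p+g-i}x) < r$, observe $g^{p+g-i}x = g^{n-(i+s)}x$, and combining with $\tfrac{1}{s} S_s \chi'(g^{n-s}x) < r$ via the cocycle identity yields $\tfrac{1}{i+s}S_{i+s}\chi'(g^{n-(i+s)}x) < r$, contradicting the maximality of $s$ (since $i+s \le g+s = n-p$).

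There is no genuine obstacle in this argument; it is purely combinatorial manipulation of Birkhoff averages and invokes no dynamical or geometric information about $g$ beyond the definitions of $\chi$ and $\chi'$. Trivial segments (when any of $p, g, s$ equals $0$) are handled uniformly because the definition of decomposition permits values in $\NN_0$ and identifies $(y,0)$ with the empty orbit segment, which lies vacuously in every collection. The substantive work for the overall program lies not in producing this decomposition but in subsequently verifying the tail specification property and the Bowen property on $\GGG$ using the expansion/contraction estimates of Lemma \ref{lem:large-scale} and the local product structure of Lemma \ref{lem:lps-mane}.
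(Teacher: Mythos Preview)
The proposal is correct and essentially matches the paper's own proof; both take $p$ to be the largest index with $\tfrac1p S_p\chi(x)<r$ and $s$ the analogous quantity for $\chi'$ from the right end, and verify $(g^p x,\,g)\in\GGG$ via the Birkhoff cocycle identity. The only cosmetic difference is that you restrict the maximization defining $s$ to $\{0,\dots,n-p\}$ from the outset (so $p+g+s=n$ automatically), whereas the paper maximizes $s$ over $\{0,\dots,n\}$ and then handles the overlap case $p+s\ge n$ by declaring $g=0$; you also write out explicitly the ``short calculation'' the paper leaves to the reader.
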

\begin{proof}
Let $(x, n) \in X \times \NN$. Let $0 \leq i \leq n$ be the largest integer so $\frac{1}{i}S_i\chi(x)< r$, and $0 \leq k \leq n$ be the largest integer so $\frac{1}{k}S_k\chi'(g^{n-k}x)< r$. A short calculation shows that $\frac{1}{\ell}S_\ell\chi(g^ix)\geq r$ for $0\leq \ell \leq n-i$, and $\frac{1}{\ell}S_\ell\chi'(g^{n-k-\ell}x)\geq r$ for $0\leq \ell\leq n-k$, see Figure~\ref{f.BVdecomp}. Thus, if we assume that $i+k<n$, letting $j= n-(i+k)$, we have
\[
(x, i) \in \PPP,\qquad (g^ix, j) \in \GGG,\qquad (g^{i+j}x, k) \in \SSS.
\]
If $i+k\geq n$, we can choose a decomposition with $j=0$. 
\end{proof}
Orbit segments in $\GGG^M$, which is the set of orbit segments $(x,n)$ for which $p \leq M$ and $s\leq M$, satisfy the following.
\begin{lem} \label{unifexpansion}
Let $\nu = \lambda / \theta_r$.  For every $M\in \NN_0$, $(x,n)\in \GGG^M$, and $0\leq i\leq n$, we have
\begin{enumerate}[label=\upshape{(\alph{*})}]
\item $\|Dg^i|_{E^{cs}(y)}\| \leq \nu^{2M}\theta_r^i$ for $y\in B_n(x,\rho'')$;
\item $\|Dg^{-i}|_{E^{cu}(g^ny)}\| \leq \nu^{2M}\theta_r^i$ for $y\in B_n(x,\rho'')$;
\item $d_{cs}(g^iy,g^iz) \leq \nu^{2M}\theta_r^i d_{cs}(y,z)$ when $y\in B_n(x, \rho')$ and $z\in W_{2\rho'}^{cs}(y)$;
\item $d_{cu}(g^{n-i}y,g^{n-i}z) \leq \nu^{2M}\theta_r^i d_{cu}(y,z)$ when $y\in B_n(g^{-n}x, \rho')$ and $z\in  W_{2\rho'}^{cu}(y)$.
\end{enumerate}
\end{lem}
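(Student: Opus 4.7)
The plan is to exploit the decomposition $n = p + m + s$ with $p, s \leq M$ and $(g^p x, m) \in \GGG$, and to track the multiplicative contribution of $Dg$ along each of the three pieces.

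For part (a), I would apply the chain rule
\[
\|Dg^i|_{E^{cs}(y)}\| \leq \prod_{j=0}^{i-1} \|Dg|_{E^{cs}(g^j y)}\|
\]
and split the product across the blocks $[0,p)$, $[p, p+m)$, $[p+m, n)$. The key observation is that for $y \in B_n(x, \rho'')$, whenever $\chi(g^j x) = 1$ (i.e.\ $g^j x \notin B(q, \rho'' + \rho)$), the triangle inequality gives $g^j y \notin B(q, \rho)$, so $\|Dg|_{E^{cs}(g^j y)}\| \leq \lambda_s$; at all other indices we use the uniform bound $\lambda_{cs} \leq \lambda$. Over the prefix and suffix, whose combined length is at most $2M$, this contributes a factor of at most $\lambda^{2M}$. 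Over any prefix of length $i''$ of the middle $\GGG$-block, the defining property of $\GGG$ forces $S_{i''}\chi(g^p x) \geq r i''$, so the product there is bounded by $\lambda_{cs}^{(1-r)i''} \lambda_s^{r i''} \leq \theta_r^{i''}$. Setting $i'' = \min(i, p+m) - \min(i, p)$ and noting $i'' \geq i - 2M$, these combine to $\lambda^{2M} \theta_r^{i-2M} = \nu^{2M} \theta_r^i$, as required. Part (b) is obtained by the symmetric argument applied to $g^{-1}$ on $E^{cu}$, using the $\SSS$-type condition on $\chi'$ built into $\GGG$ and the analogous pointwise bounds on $\|Dg^{-1}|_{E^{cu}}\|$ inside and outside $B(q', \rho'' + \rho)$.

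For (c), I would take a length-minimizing path $\sigma$ in $W^{cs}_{2\rho'}(y)$ from $y$ to $z$; since $W^{cs}$ is $g$-invariant and tangent to $E^{cs}$, its iterates give
\[
d_{cs}(g^i y, g^i z) \leq \ell(g^i \sigma) \leq \sup_{w \in \sigma} \|Dg^i|_{E^{cs}(w)}\| \cdot d_{cs}(y, z),
\]
so it suffices to apply the pointwise bound from (a) at each $w \in \sigma$. For small $i$ (say $i \leq 2M$) the crude estimate $\|Dg^i|_{E^{cs}(w)}\| \leq \lambda^i$ combined with the inequality $\nu^{2M} \theta_r^i \geq \lambda^i$ already gives the conclusion. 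For $i > 2M$, I would invoke (a) along the orbit of each $w \in \sigma$, which requires $w \in B_n(x, \rho'')$; this is where the generous choice $\rho'' = \sixtythree\kappa\rho'$ relative to the initial leaf length $2\rho'$ and the ambient distance $d(y,x) < \rho'$ becomes essential. Part (d) is the mirror statement for $g^{-1}$ and $W^{cu}$, proved identically.

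The main obstacle is the geometric bookkeeping required in (c) and (d): one must verify that during the uncontrolled $\PPP$- and $\SSS$-blocks the iterated leaf path $g^j\sigma$ does not drift so far from the orbit of $x$ that the triangle-inequality step underlying (a) fails. The potential expansion by up to $\lambda^M$ in each of the two boundary blocks must be absorbed by the gap $\rho'' - \rho' \sim \rho''$; this is precisely the role of the factor $\nu^{2M} = (\lambda/\theta_r)^{2M}$, which lets the pointwise contraction estimate of (a) propagate from tangent vectors to leaf distances via integration along $\sigma$.
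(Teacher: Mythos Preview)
Your argument for (a) and (b) is the paper's: the paper condenses your block decomposition into the single inequality $S_i\chi(x) > ir - 2M$, then transfers this count from $x$ to $y$ via the triangle inequality (exactly as you describe) and takes the same product $\lambda^{i-(ir-2M)}\lambda_s^{ir-2M}$.

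For (c) and (d) your reduction to the pointwise estimate (a) along the path $\sigma$ is also the paper's, but the mechanism you propose for keeping $\sigma$ inside $B_n(x,\rho'')$ is wrong and would not close. You suggest that a possible expansion of the leaf by $\lambda^{M}$ over each boundary block is ``absorbed by the gap $\rho'' - \rho'$''; but $M$ is arbitrary while $\rho''$ is fixed, so $2\rho'\lambda^{M}$ cannot be bounded by $\rho''$ for all $M$. The factor $\nu^{2M}$ plays no role here either --- it lives in the derivative estimate, not in the location of the leaf.

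The actual reason $\sigma\subset B_n(x,\rho'')$ is the large-scale contraction of Lemma~\ref{lem:overflowing}: for every $R>\rho'$ and every point of $\TT^4$ (including points in the perturbed balls), $g(W_R^{cs}(\cdot))\subset W_{\theta_{cs}R}^{cs}(g\cdot)$. Iterating gives $g^j(W_{2\rho'}^{cs}(y))\subset W_{2\rho'}^{cs}(g^jy)$ for all $j\geq 0$, with no growth whatsoever during the $\PPP$- or $\SSS$-blocks. Hence every $w\in\sigma$ satisfies $d(g^jw,g^jx)\leq d_{cs}(g^jw,g^jy)+d(g^jy,g^jx)<2\rho'+\rho'=3\rho'$, so $w\in B_n(x,3\rho')\subset B_n(x,\rho'')$ and (a) applies. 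The constant $\sixtythree\kappa$ in $\rho''$ is not used here at all; it is needed only later for the local product structure in the Bowen-property argument.
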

\begin{proof}
We prove (a). Given $(x,n) \in \GGG^M$ and $0\leq i \leq n$, we have $S_i \chi(x) > ir-2M$, and so the orbit segment $(x, i)$ spends greater than 
$ir-2M$ iterates outsides $B(q, 4 \rho')$, and thus $(y, i)$ spends greater than 
$ir-2M$  iterates outsides $B(q, \rho)$. It follows that
\begin{align*}
\|Dg^i|_{E^{cs}(y)}\| & \leq \lambda^{i- (ir-2M)}\lambda_s^{ir-2M} \\
& = \lambda^{i(1-r)} \lambda_s^{ir} \lambda^{2M}\lambda_s^{-2M}  
 = (\theta_r)^i \nu^{2M}.
\end{align*}
For (c), note that if $y\in B_n(x, \rho')$ and $z' \in W_{2\rho'}^{cs}(y)$, then $z' \in B_n(x, 3 \rho')$. Thus, the uniform derivative estimate of (a) applies to all points in $W_{2\rho'}^{cs}(y)$, and it is an easy exercise to use this to obtain the statement of (c). The proof of (b) is similar to (a), and (d) follows.
\end{proof}

We use the following facts for our result on the specification property:
\begin{itemize}
\item For any $x \in \TT^4$ and $n \in \NN$, from Lemma \ref{lem:overflowing} we have $W_{\rho'}^{cs}(x) \subset B_n(x, \rho')$ and $g^{-n}(W_{\rho'}^{cu}(g^nx)) \subset B_n(x, \rho')$;
\item If $(x, n) \in \GGG^M$ and $y,z \in B_n(x, 3 \rho')$ and $g^nz \in W^{cu}(g^ny)$, then Lemma \ref{unifexpansion} (c) gives $d_n(y, z) \leq \nu^{2M} d_{cu}(g^ny, g^nz)$ and $d_{cu}(y,z) \leq \nu^{2M} \theta_r^n d_{cu}(g^ny,g^nz)$.

\end{itemize}

Given $M$, we take $N=N(M)$ such that $\theta_r^N \nu^{2M} \lambda^{\tau_0} < \frac 12$, where $\tau_0$ is as in \eqref{eqn:uniform-mix}.  Then we let $\GGG^M_{\geq N} := \{(x,n)\in \GGG^M \mid n\geq N\}$.

\begin{lem}\label{lem:GMspec}
For every $M$, let $N=N(M)$ be as above. Then $\GGG^M_{\geq N}$ has specification at scale $3\rho'$.
\end{lem}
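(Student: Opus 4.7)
The plan is to prove specification at scale $3\rho'$ by a backward inductive construction of the shadowing point, taking the transition time to be $\tau = \tau_0$, where $\tau_0$ is the constant from \eqref{eqn:uniform-mix}. The three ingredients driving the argument are already in place: the overflow/intersection property of Lemma \ref{lem:overflowing}, the uniform contraction of $W^{cs}$ under forward iteration and of $W^{cu}$ under backward iteration on segments in $\GGG^M$ (Lemma \ref{unifexpansion}), and the comparability of leaf and ambient distances (Lemma \ref{compare:dist}).

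Given $(x_1,n_1),\dots,(x_k,n_k) \in \GGG^M_{\geq N}$, I would construct points $z_k,z_{k-1},\dots,z_1$ backwards and then take the shadowing point to be $z := z_1$. Set $z_k := x_k$. Assuming $z_{j+1}$ has been defined, apply \eqref{eqn:uniform-mix} to pick
\[
u_j \in W^{cu}_{\rho'}(g^{n_j}x_j) \quad \text{with} \quad g^{\tau_0}(u_j) \in W^{cs}_{\rho'}(z_{j+1}),
\]
and let $z_j := g^{-n_j}(u_j)$, which lies in $W^{cu}(x_j)$ by invariance of the foliation. Intuitively the $W^{cu}$-leg is used for the new segment while the $W^{cs}$-leg is used to hand off to the tail that has already been built.

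The verification has two parts. First, by induction on $j$ (from $k$ down to $1$), I would show that each $z_j$ lies in the Bowen ball $B_{n_j}(x_j,\rho')$: $x_j$ and $z_j$ share a center-unstable leaf with endpoint distance $d_{cu}(g^{n_j}x_j, u_j)\leq \rho'$, so applying Lemma \ref{unifexpansion}(d) to $(x_j,n_j)\in \GGG^M$ bounds the leaf distance $d_{cu}(g^i z_j, g^i x_j)$ at every intermediate time by $\rho'$, and Lemma \ref{compare:dist} passes this to the ambient metric. Second, that the orbit of $z$ shadows each $(x_\ell,n_\ell)$ at scale $3\rho'$: by construction, when the orbit of $z$ enters segment $\ell$ it lies on the center-stable leaf of $z_\ell$ at leaf-distance at most $\rho'$; Lemma \ref{unifexpansion}(c) then keeps it within $\nu^{2M}\theta_r^i\rho'$ of $g^i z_\ell$ throughout the segment, and combining with the $\rho'$-closeness of $z_\ell$ to $x_\ell$ gives the $3\rho'$ Bowen bound.

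The main obstacle is the potential compounding of error across successive $\tau_0$-transitions, each of which can expand distance by up to $\lambda^{\tau_0}$. The defining choice $\nu^{2M}\theta_r^N\lambda^{\tau_0}<\tfrac12$ of $N(M)$ is exactly what is needed so that one segment of length $\geq N$ (contracting by $\nu^{2M}\theta_r^N$) followed by one transition (expanding by $\lambda^{\tau_0}$) is a strict contraction with ratio $<\tfrac12$; summing the resulting geometric series keeps the cumulative $W^{cs}$-error below $\rho'$, and the triangle inequality with the leg-wise $\rho'$-bound from the first part then yields the required $3\rho'$-shadowing of every segment. The careful bookkeeping of when leaf distances are being added to ambient distances (and how the factor $(1+\beta)^2$ from Lemma \ref{compare:dist} interacts with the $3\rho'$ target) is where the argument requires the most attention, but the strict contraction ensures no further shrinking of constants is needed.
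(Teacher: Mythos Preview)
Your approach is correct and is the time-reversed dual of the paper's construction: the paper builds the shadowing point \emph{forward} as $y_1=x_1,\,y_2,\dots,y_k$, placing $g^{m_j}y_{j+1}\in W^{cu}_{\rho'}(g^{m_j}y_j)$ and $g^{m_j+\tau}y_{j+1}\in W^{cs}_{\rho'}(x_{j+1})$, and then controls the deviation of $y_k$ from the earlier $y_j$ via backward $W^{cu}$-contraction; you build \emph{backward} with $z_k=x_k$, place $z_j$ on $W^{cu}(x_j)$ and (after transition) on $W^{cs}(z_{j+1})$, and control the deviation of the $z_1$-orbit from the later $z_\ell$ via forward $W^{cs}$-contraction. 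Both routes use the same transition time $\tau_0$, the same estimate $\theta_r^{N}\nu^{2M}\lambda^{\tau_0}<\tfrac12$, and the same geometric series, so neither buys anything the other does not.

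Two small points of bookkeeping to tighten. First, the recursion you get is $\epsilon_{\ell+1}\le \rho'+\tfrac12\epsilon_\ell$, which gives $\epsilon_\ell<2\rho'$, not $\le\rho'$ as you write; combined with the $\rho'$-shadowing of $x_\ell$ by $z_\ell$ this still lands exactly on the target $3\rho'$. Second, for the intermediate iterates along segment $\ell$ the bound from Lemma~\ref{unifexpansion}(c) is $\nu^{2M}\theta_r^{i}\epsilon_\ell$, which can exceed $2\rho'$ for small $i$ when $M\ge 1$; the correct tool there is Lemma~\ref{lem:overflowing} (the $W^{cs}$-leaf at any scale $\ge\rho'$ contracts under $g$, and at scale $\le\rho'$ stays inside $W^{cs}_{\rho'}$), which keeps the $W^{cs}$-distance bounded by $\epsilon_\ell<2\rho'$ throughout. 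Lemma~\ref{unifexpansion}(c) is still what drives the end-of-segment contraction needed for the recursion.
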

\begin{proof}
Write $\tau=\tau_0$, so that \eqref{eqn:uniform-mix} gives $g^\tau(W_{\rho'}^{cu}(x)) \cap W_{\rho'}^{cs}(y) \neq\emptyset$ for every $x,y\in \TT^4$.

For every $(x,n)\in \GGG^M_{\geq N}$ and $y,z\in g^{-(n+\tau)}(g^\tau(W_{\rho'}^{cu}(x)))$, our choice of $N$ gives
\begin{equation}\label{eqn:yz-contracting}
d(y,z) < \tfrac{1}{2}d(g^{n+\tau}y, g^{n+\tau}z).
\end{equation}
Now we show that $\GGG^M_{\geq N}$ has specification with transition time $\tau$.  Given any
$(x_1, n_1), \dots, (x_k, n_k)\in \mathcal{G}^M$ with $n_i \geq N$, we construct $y_j$ iteratively such that $(y_j,m_j)$ shadows $(x_1,n_1),\dots,(x_j,n_j)$, where $m_1 =n_1$, $m_2 = n_1 + \tau + n_2$, $\dots$, $m_k = (\sum_{i=1}^{k} n_i) + (k-1)\tau$. We also set $m_{0} = - \tau$.
 
Start by letting $y_1 = x_1$, and we choose $y_2,\dots, y_k$ iteratively so that
$$
\begin{matrix}
g^{m_1}y_2 \in W^{cu}_{\rho'} (g^{m_1}y_1) &  \mbox{and} & g^{m_1+\tau}y_2 \in W^{cs}_{\rho'} (x_2)\\
g^{m_2}y_3 \in W^{cu}_{\rho'} (g^{m_2}y_2) &  \mbox{and}   & g^{m_2+\tau}y_3 \in W^{cs}_{\rho'} (x_3)\\
\vdots & \vdots & \vdots \\
g^{m_{k-1}}y_k \in W^{cu}_{\rho'} (g^{m_{k-1}}y_{k-1}) &  \mbox{and}   & g^{m_{k-1}+\tau}y_{k} \in W^{cs}_{\rho'} (x_k).\\
\end{matrix}
$$
That is, for $j \in \{1, \ldots, k-1\}$, we let $y_{j+1}$ be a point such that 
\[
y_{j+1} \in g^{-m_j}(W^{cu}_{\rho'}(g^{m_j}y_j))\cap g^{-(m_j+ \tau)}( W^{cs}_{\rho'}(x_{j+1})).
\]
Using the fact that $g^{m_j}y_{j+1}$ is in the centre-unstable manifold of $g^{m_j}y_j$ together with the estimate \eqref{eqn:yz-contracting}, we obtain that 
$$
\begin{matrix}
d_{n_j}(g^{m_{j-1}+\tau}y_j, g^{m_{j-1}+\tau}y_{j+1})& < &\rho' \\
d_{n_{j-1}}(g^{m_{j-2}+\tau}y_j, g^{m_{j-2}+\tau}y_{j+1})& < &\rho'/2 \\
\vdots &  &\vdots\\
d_{n_1}(y_j, y_{j+1}) & < & \rho'/2^{j-1}.
\end{matrix}
$$
That is, $d_{n_{j-i}}(g^{m_{j-i-1}+\tau}y_j, g^{m_{j-i-1}+\tau}y_{j+1}) < \rho'/2^i$ for $i \in \{0, \ldots, j-1\}$. This estimate, together with the fact that $g^{m_j+\tau}(y_{j+1}) \in B_{n_{j+1}}(x_{j+1}, \rho')$ from Lemma \ref{lem:overflowing} gives that $d_{n_j}(g^{m_{j-1}+ \tau}y_k, x_j) < 2 \rho' + \sum_{j=1}^\infty 2^{-j} \rho' = 3\rho'.$  It follows that
$$y_k\in \bigcap_{j=1}^k g^{-(m_{j-1} + \tau)}B_{n_j}(x_j, 3 \rho'),$$
and thus $\mathcal{G}^M_{\geq N}$ has specification at scale $3 \rho'$.
\end{proof}


\subsection{Bowen property}

Let $\theta_r \in (0,1)$ be the constant that was defined at \eqref{eqn:theta-r-bv}, and let $\kappa$ be the constant associated with the local product structure of $E^{cs}_g \oplus E^{cu}_g$.

\begin{lem}\label{bowen-ballsBV}
Given $(x,n)\in \GGG$ and $y\in B_n(x,\sixtythree\rho')$, we have
\begin{equation}\label{eqn:hyp-hyp2}
d(g^kx,g^ky) \leq \kappa \onetwosix\rho' (\theta_r^{n-k} + \theta_r^{k})
\end{equation}
for every $0\leq k\leq n$.
\end{lem}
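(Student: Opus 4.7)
My strategy is to exploit the local product structure for $W^{cs}, W^{cu}$ together with the uniform derivative estimates of Lemma~\ref{unifexpansion} available for orbit segments in $\GGG = \GGG^0$. Since $d(x,y) \leq \sixtythree\rho' < 6\eta$, Lemma~\ref{lem:lps-mane} produces a unique intersection point
\[
z \in W^{cs}_{\kappa\sixtythree\rho'}(x) \cap W^{cu}_{\kappa\sixtythree\rho'}(y),
\]
so that $d_{cs}(x,z), d_{cu}(y,z) \leq \kappa\sixtythree\rho' = \rho''$. The triangle inequality together with Lemma~\ref{compare:dist} reduces the goal to proving the two one-sided bounds
\[
d_{cs}(g^k x, g^k z) \leq \kappa\sixtythree\rho' \cdot \theta_r^k \quad \text{and} \quad d_{cu}(g^k z, g^k y) \leq \kappa\sixtythree\rho' \cdot \theta_r^{n-k}
\]
for every $0 \leq k \leq n$; summing these yields the desired estimate, the constant $\onetwosix = 2\cdot\sixtythree$ providing ample room for any absorbed constants.

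For the first bound, let $\sigma$ be the $W^{cs}$-geodesic from $x$ to $z$. Its length equals $d_{cs}(x,z) \leq \rho''$, so $\sigma \subset W^{cs}_{\rho''}(x)$. Iterating Lemma~\ref{lem:overflowing} forward, the images $g^j(\sigma)$ remain contained in $W^{cs}_{\rho''}(g^j x)$ for every $0 \leq j \leq n$: above scale $\rho'$ the leaf length contracts by $\theta_{cs}$, while below scale $\rho'$ the leaf distance can grow by at most a factor $\lambda$ per step, which (by our freedom in the choice of constants) is safely smaller than $\kappa\cdot\sixtythree$. Consequently, by Lemma~\ref{compare:dist}, every $w \in \sigma$ lies in the Bowen ball $B_n(x, \rho'')$, and Lemma~\ref{unifexpansion}(a) (with $M = 0$) then gives the pointwise bound $\|Dg^k|_{E^{cs}(w)}\| \leq \theta_r^k$. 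Integrating along $\sigma$ yields $d_{cs}(g^k x, g^k z) \leq \theta_r^k \cdot d_{cs}(x,z) \leq \rho''\theta_r^k$.

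The second bound is an analogous argument run in reverse: consider the $W^{cu}$-geodesic $\tau$ from $g^n y$ to $g^n z$ at time $n$, whose length is controlled by local product structure applied at time $n$ (valid since $d(g^n x, g^n y)$ remains small by continuity of $g^n$, so that the unique local product point coincides with $g^n z$). The backward analogue of Lemma~\ref{lem:overflowing} then keeps $g^{-i}(\tau) \subset W^{cu}_{\rho''}(g^{n-i} y)$, and Lemma~\ref{unifexpansion}(b) contracts vectors in $E^{cu}$ at rate $\theta_r^i$ under $g^{-i}$; integrating along $\tau$ gives the second estimate. The main technical obstacle is the Bowen-ball verification here: unlike for $\sigma$, a point $v$ on a backward-iterate of $\tau$ lies close to $y$'s orbit rather than $x$'s, so the naive bound $d(g^j x, v) \leq d(g^j x, g^j y) + d_{cu}(g^j y, v) \leq \sixtythree\rho' + \rho''$ threatens to exceed $\rho''$. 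Reconciling this requires exploiting that the threshold $\rho'' + \rho$ in the definition of $\chi, \chi'$ leaves just enough margin for the $\GGG$-hypothesis on the orbit of $x$ to transfer to nearby points $v$---this is the delicate bookkeeping step that forces the precise relationship between $\sixtythree$, $\kappa$, $\rho$, and $\rho''$ fixed in \S\ref{sec:BV}.
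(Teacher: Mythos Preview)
Your overall approach---local product structure to produce $z\in W^{cs}_{\rho''}(x)\cap W^{cu}_{\rho''}(y)$, then contraction estimates on each side via Lemma~\ref{unifexpansion}, then triangle inequality---is exactly the paper's. The $cs$ half is fine, though you overcomplicate it: since $\rho''>\rho'$, Lemma~\ref{lem:overflowing} applies directly to give $g(W^{cs}_{\rho''}(x))\subset W^{cs}_{\theta_{cs}\rho''}(gx)\subset W^{cs}_{\rho''}(gx)$, so every point of $\sigma$ lies in $B_n(x,\rho'')$ with no need to discuss what happens ``below scale $\rho'$''.

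The $cu$ half has two genuine problems. First, you cannot invoke local product structure at time $n$: the Bowen ball $B_n(x,\sixtythree\rho')$ only controls times $0\le j\le n-1$, and ``continuity of $g^n$'' gives bounds that blow up with $n$, so you have no a~priori control on $d(g^nx,g^ny)$. Second, your diagnosis of the Bowen-ball obstacle is correct, but the proposed resolution does not work: the margin built into $\chi'$ is only $\rho$, whereas the overshoot you need to absorb is $\sixtythree\rho'=1500\rho$, so the inequality $\sixtythree\rho'+\rho''\le\rho''$ you would need is simply false.

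The clean fix---which the paper's one-line proof also leaves implicit---is to run the $cs$ estimate \emph{first}. Once you know $d(g^jx,g^jz)\le\theta_r^j\rho''\le\rho''$ for all $0\le j\le n$, the triangle inequality with $d(g^jx,g^jy)<\sixtythree\rho'$ gives $d(g^jy,g^jz)<\sixtythree\rho'+\rho''$ for $0\le j\le n-1$. Since $g^jy$ and $g^jz$ lie on the same global $W^{cu}$-leaf and this ambient distance stays below the scale at which leaves are graphs over $F^u$, Lemma~\ref{compare:dist} converts this into a uniform bound $d_{cu}(g^jy,g^jz)\le(1+\beta)^2(\sixtythree\rho'+\rho'')$ for all such $j$. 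This replaces your failed appeal to LPS at time $n$: it gives the ``initial'' bound on $d_{cu}(g^{n-1}y,g^{n-1}z)$ from which backward contraction (Lemma~\ref{unifexpansion}(b)) produces the $\theta_r^{n-k}$ decay. The factor $\onetwosix=2\cdot\sixtythree$ in the statement is exactly the slack that absorbs the resulting constants.
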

\begin{proof}
Using the local product structure at scale $\sixtythree\rho'$ 
and observing that $\rho'' = \kappa\sixtythree\rho'$, we see that for each $0\leq k\leq n$ there is $z_k\in W_{\rho''}^{cs}(g^k x) \cap W_{\rho''}^{cu}(g^k y)$.  By invariance of the foliations we get $z_k = g^k(z_0)$.
Let $\gamma$ be the geodesic in $W^{cu}(g^n y)$ that connects $g^n(y)$ and $z_n$.  Since each endpoint of $\gamma$ is in $B(g^n(x),\rho'')$, convexity implies that the straight line joining them lies in $B(g^n(x),\rho'')$; choose $\beta$ small enough that the leaves $W^{cu}$ are close enough to linear that the same is true for $\gamma$, no matter what $x,n$ we choose. Then we can apply Lemma \ref{unifexpansion}(b) along $\gamma$ to obtain 
\[
d(z_k,g^ky) \leq \theta_r^{n-k} d(z_n,g^ny) \leq \theta_r^{n-k} \kappa \sixtythree\rho',
\]
and Lemma \ref{unifexpansion}(a) gives $d(g^kx,z_k) \leq \theta_r^k d(x,z_0) \leq \theta_r^k \kappa \sixtythree\rho'$.
The result follows.
\end{proof}

\begin{lem} \label{bowenpropBV}
Any H\"older continuous $\varphi$ has the Bowen property on $\GGG$ at scale $\sixtythree\rho'$.
\end{lem}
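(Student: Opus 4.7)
The plan is to combine Hölder continuity of $\varphi$ with the exponential-decay estimate from Lemma \ref{bowen-ballsBV}. Let $\alpha \in (0,1]$ and $|\varphi|_\alpha$ be such that $|\varphi(u)-\varphi(v)| \leq |\varphi|_\alpha d(u,v)^\alpha$ for all $u,v \in \TT^4$.

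Fix $(x,n) \in \GGG$ and $y \in B_n(x, \sixtythree\rho')$. Then I would estimate
\[
|S_n\varphi(x) - S_n\varphi(y)| \leq \sum_{k=0}^{n-1} |\varphi(g^k x) - \varphi(g^k y)| \leq |\varphi|_\alpha \sum_{k=0}^{n-1} d(g^k x, g^k y)^\alpha.
\]
By Lemma \ref{bowen-ballsBV}, $d(g^k x, g^k y) \leq \kappa\onetwosix\rho'(\theta_r^{n-k} + \theta_r^k)$ for every $0 \leq k \leq n$. Since $\alpha \in (0,1]$, we have the subadditivity bound $(a+b)^\alpha \leq 2^\alpha \max(a,b)^\alpha$, and thus
\[
d(g^k x, g^k y)^\alpha \leq (2\kappa\onetwosix\rho')^\alpha \, \theta_r^{\alpha \min(k, n-k)}.
\]

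Summing over $k$, the exponent $\min(k, n-k)$ takes each nonnegative integer value at most twice as $k$ ranges over $\{0, 1, \dots, n-1\}$, so
\[
\sum_{k=0}^{n-1} \theta_r^{\alpha \min(k, n-k)} \leq 2 \sum_{j=0}^{\infty} \theta_r^{\alpha j} = \frac{2}{1-\theta_r^\alpha},
\]
which is finite because $\theta_r \in (0,1)$. Combining these estimates gives
\[
|S_n\varphi(x) - S_n\varphi(y)| \leq \frac{2^{\alpha+1}|\varphi|_\alpha (\kappa \onetwosix \rho')^\alpha}{1 - \theta_r^\alpha},
\]
a bound that is independent of $n$, $(x,n)$, and $y$. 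This establishes $V(\GGG,\varphi,\sixtythree\rho') < \infty$ and hence the Bowen property.

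I do not anticipate any serious obstacle: the heavy lifting has already been done in Lemma \ref{bowen-ballsBV}, where the local product structure and the uniform hyperbolicity on $\GGG^M$ (Lemma \ref{unifexpansion}) were used to show that points in Bowen balls of elements of $\GGG$ exhibit a symmetric exponential contraction pattern toward the midpoint of the orbit. The only remaining work is the routine geometric-series bound above.
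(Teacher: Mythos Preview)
Your proof is correct and follows essentially the same approach as the paper: both arguments invoke Lemma \ref{bowen-ballsBV} to get the two-sided exponential bound on $d(g^kx,g^ky)$, apply H\"older continuity of $\varphi$, and then sum the resulting geometric series. The only difference is cosmetic bookkeeping---you pass through $\max(\theta_r^k,\theta_r^{n-k})=\theta_r^{\min(k,n-k)}$ and count multiplicities, while the paper bounds $(\theta_r^{n-k}+\theta_r^k)^\alpha$ directly and sums both terms separately.
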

\begin{proof}
By H\"older continuity, there are constants $K>0$ and $\alpha\in (0,1)$ such that $|\ph(x) - \ph(y)| \leq K d(x,y)^\alpha$ for all $x,y\in \TT^d$.  Now given  $(x, n) \in \GGG$ and $y\in B_n(x, \sixtythree\rho')$, Lemma \ref{bowen-ballsBV} gives
\begin{align*}
|S_n\ph(x)-S_n \ph(y)| &\leq K\sum_{k=0}^{n-1}d(g^kx, g^ky)^\alpha \leq K (\kappa \onetwosix\rho')^\alpha \sum_{k=0}^{n-1} (\theta_r^{n-k} + \theta_r^{k})^\alpha \\
&\leq 2^\alpha K (\kappa \onetwosix\rho')^\alpha\sum_{j=0}^\infty (\theta_r^{j\alpha} + \theta_r^{j\alpha})
=: V <\infty.\qedhere
\end{align*}
\end{proof}

\subsection{Expansivity}\label{sec:bv-expansivity}

We want to obtain a bound on $h_g^*$, the tail entropy of $g$. By results of \cite{DF}, the tail entropy may be positive. 
We assume that $\beta$ is chosen not too large so that $(1+\beta)/(1-\beta)<2$.

\begin{lem}\label{lem:Wcu-span}
Let $\delta \in (0, 6 \eta)$.
Given $n\in \NN$, and $x,z\in \TT^4$ such that $d_n(x,z) < 6\eta$, we have
\begin{equation}\label{eqn:Wcu-span}
\Lspan_n(W^{cu}_{6\eta}(z) \cap B_n(x,6\eta), 0, \delta; g) \leq 32(6\eta)^2 \delta^{-2} \lambda^{2n}.
\end{equation}
\end{lem}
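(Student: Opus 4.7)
The plan is to convert an $(n,\delta)$-separated subset of $Y := W^{cu}_{6\eta}(z) \cap B_n(x,6\eta)$ into a set separated in the intrinsic leaf metric of $W^{cu}(g^{n-1}z)$ at scale $\delta/\lambda^{n-1}$, and then to count using a two-dimensional area estimate in the leaf. By Lemma \ref{lem:span-sep} it is enough to bound the cardinality of a maximal $(n,\delta)$-separated set $E \subset Y$.

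The key dynamical input is a backward-contraction estimate on $W^{cu}$ leaves. Since $\|Dg^{-1}|_{E^{cu}(x)}\| \leq \lambda_{cu}(g)^{-1} \leq \lambda$ at every point, for $a,b$ on the same local leaf of $W^{cu}$ one has
\[
d_{cu}(g^{-j}a, g^{-j}b) \leq \lambda^j d_{cu}(a,b), \qquad j \geq 0,
\]
or equivalently $d_{cu}(g^jy, g^jw) \geq \lambda^{-j} d_{cu}(y,w)$. Now suppose $y_1,y_2 \in E$ are distinct; then there is $0 \leq i \leq n-1$ with $d(g^iy_1,g^iy_2) \geq \delta$, and since both images lie on the same leaf, the bound $d \leq d_{cu}$ together with the above yields
\[
d_{cu}(g^{n-1}y_1, g^{n-1}y_2) \geq \lambda^{-(n-1-i)} d_{cu}(g^iy_1,g^iy_2) \geq \lambda^{-(n-1)} \delta.
\]
Thus $g^{n-1}(E) \subset W^{cu}(g^{n-1}z) \cap B(g^{n-1}x, 6\eta)$ is $(\delta\lambda^{-(n-1)})$-separated in the leaf metric, and the leaf balls $W^{cu}_{\delta/(2\lambda^{n-1})}(q)$ for $q\in g^{n-1}(E)$ are pairwise disjoint.

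The remaining step is a uniform bound of the form (area of $g^{n-1}(Y)$ in the leaf) $\leq C_0(6\eta)^2$. Since $E^{cu}_g \subset C_\beta(F^u,F^s)$ with $\beta < 1/3$, every sheet of $W^{cu}$ contained in a ball of radius $6\eta$ is a Lipschitz graph over $F^u$ of Lipschitz constant at most $\beta$ and so has area at most $(1+\beta)^2\pi(6\eta)^2$. The delicate point, which I expect to be the main obstacle, is that the leaf $W^{cu}(g^{n-1}z)$ is dense in $\TT^4$ and could in principle meet $B(g^{n-1}x, 6\eta)$ in many such sheets; this is handled by observing that $g^{n-1}(Y)$ is confined to the single connected image disk $g^{n-1}(W^{cu}_{6\eta}(z))$, and by invoking the local product structure at scale $6\eta$ from Lemma \ref{lem:lps-mane} to bound the number of components of this image disk lying in the ball by an absolute constant depending only on $\beta$ and $\kappa$. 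Since each disjoint leaf ball $W^{cu}_{\delta/(2\lambda^{n-1})}(q)$ has area at least $c_0(\delta/\lambda^{n-1})^2$, the area estimate then yields $\#E \leq (C_0/c_0)(6\eta)^2\delta^{-2}\lambda^{2(n-1)}$, which after calibrating $\beta$ and $\kappa$ is at most $32(6\eta)^2 \delta^{-2}\lambda^{2n}$. Combined with Lemma \ref{lem:span-sep}, this gives the stated bound on $\Lspan_n$.
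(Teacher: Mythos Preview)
Your separation argument is fine, and you correctly identify the crux of the matter: bounding the leaf-area of $g^{n-1}(Y)$. But the proposed resolution has a genuine gap. Local product structure at scale $6\eta$ only controls how \emph{local} leaves of $W^{cu}$ and $W^{cs}$ intersect; it says nothing about how many sheets of the large disk $g^{n-1}(W^{cu}_{6\eta}(z))$ can meet the ball $B(g^{n-1}x,6\eta)$. That disk has leaf-diameter of order $6\eta$ times the forward expansion along $E^{cu}$, hence exponentially large in $n$, and in general it will return to a fixed-size ball exponentially many times (already for a linear Anosov map on $\TT^2$ a $W^u$-segment of length $L$ meets a fixed ball in $\sim L$ components). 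So the ``absolute constant depending only on $\beta$ and $\kappa$'' does not exist, and your area bound fails.

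What rescues the statement is that $Y$ is cut out by the full Bowen ball $B_n(x,6\eta)$, not just by $g^{-(n-1)}(B(g^{n-1}x,6\eta))$; one must use the constraint at every intermediate time. The paper does this by an induction on $k$: assuming $g^{k-1}(Y)\subset W^{cu}_{4\eps}(g^{k-1}z)$ (with $\eps=6\eta$), one iterate gives $g^k(Y)\subset W^{cu}_{4\eps\|Dg\|}(g^k z)$, and since also $g^k(Y)\subset B(g^k x,\eps)\subset B(g^k z,2\eps)$ and $4\eps\|Dg\|$ is too small to wrap around the torus, Lemma~\ref{compare:dist} forces $g^k(Y)\subset W^{cu}_{2\eps(1+\beta)^2}(g^k z)\subset W^{cu}_{4\eps}(g^k z)$. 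Thus $g^{n-1}(Y)$ lies in a single local leaf of fixed size $4\eps$, and the counting is done by taking an $\alpha$-dense set there with $\alpha=\delta(1+\beta)^{-1}\lambda^{-n}$ and pulling it back by $g^{-n}$ (which expands $d_{cu}$ by at most $\lambda^n$) to get an $(n,\delta)$-spanning set of the required cardinality. Your separated-set approach would also go through once you have this inductive confinement.
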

\begin{proof}
Write $\epsilon=6\eta$. First we prove that 
\[
W^{cu}_{\epsilon}(z) \cap B_{k}(x,\epsilon) \subset g^{-(k-1)}(W_{4\epsilon }^{cu}(g^{k-1}z))
\]
for $k \in \{1, \ldots, n\}$.  This follows by induction; it is true for $k=1$, and given the result for $k\in\{1, \ldots, n-1\}$, we see that any $z'\in W_{\epsilon}^{cu}(z) \cap B_{k+1}(x,\epsilon)$ has $g^{k-1}(z')\in W_{4 \epsilon}^{cu}(g^{k-1}z)$ by the inductive hypothesis, and so
\[
g^{k}(z') \in W_{4 \epsilon \|Dg\|}^{cu}(g^{k}z).
\]
Also $g^k(z') \in B(g^kx, \epsilon) \subset B(g^kz, 2 \epsilon)$, where the last inclusion follows because $4 \epsilon \|Dg\|$ is not enough distance to wrap all the way around the torus and enter $B(g^{k}x,\epsilon)$ again. This is true because $\epsilon$ is assumed to be not too large. This is the only requirement on $\epsilon$ in this proof.
Thus, by Lemma \ref{compare:dist},  $g^{k}(z') \in W_{2\epsilon (1+\beta)/(1-\beta)}^{cu}(g^{k}z) \subset W_{4\epsilon }^{cu}(g^{k}z)$. 
Now fix $\alpha=\delta(1+\beta)^{-1}\lambda^{-n}$.  Recall that $W_{4\eps}^{cu}(g^nz)$ is the graph of a function from $F^{cu}$ to $F^{cs}$ with norm less than $\beta$.  The projection of $W_{4\eps}^{cu}(g^nz)$ to $F^{cu}$ along $F^{cs}$ is contained in a ball of radius $4\eps$, so $B_{4\eps}(0)$ in $F^{cu}$  has an $\alpha$-dense subset in the $d_n$-metric with cardinality less than or equal to $ 16\eps^2\alpha^{-2}$.  Projecting this set back to $W_{4\eps}^{cu}(g^nz)$ along $F^{cu}$ gives $E\subset W_{4\eps}^{cu}(g^nz)$ that is $(1+\beta)\alpha$-dense.

Consider the set $g^{-n}(E) \subset W^{cu}(z)$.  Given any $y\in W^{cu}_{\eps}(z) \cap B_n(x,\eps)$, we have $g^n(y) \in W_{4\eps}^{cu}(g^nz)$ and so there is $z'\in E$ such that $d_{cu}(g^ny, g^nz') < (1+\beta)\alpha$.  Since $g^{-1}$ expands distances along $W^{cu}$ by at most $\lambda$, we have $d_n(y,z') < (1+\beta)\alpha \lambda^n$.  We see that $g^{-n}(E)$ is an $(n,\delta)$-spanning set for $W^{cu}_{\eps}(z) \cap B_n(x,\eps)$, and moreover
\[
\#g^{-n}(E) \leq 16\eps^2\alpha^{-2} \leq 16\eps^2 \delta^{-2} (1+\beta)^2 \lambda^{2n},
\]
which gives \eqref{eqn:Wcu-span} and completes the proof of Lemma \ref{lem:Wcu-span}.
\end{proof}


\begin{lem} \label{hexp}
For every $g\in \VVV$ we have $h_g^\ast(6\eta) \leq 6\log \lambda$.
\end{lem}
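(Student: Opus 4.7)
The plan is to show $\Lspan_n(\Gamma_{6\eta}(x), 0, \delta; g) \leq C(\delta)\lambda^{6n}$ for each $x\in\TT^4$ and $\delta>0$, and then let $n\to\infty$ and $\delta\to 0$ to extract the tail-entropy bound. The key observation is that $\Gamma_{6\eta}(x)$ is contained in \emph{both} the forward and backward Bowen balls of $x$ at scale $6\eta$, so one can hope to span it using the product of a spanning set in a $W^{cu}$-leaf (controlled by the forward Bowen-ball condition) and one in a $W^{cs}$-leaf (controlled by the backward Bowen-ball condition).

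First I would use the local product structure on $W^{cs}, W^{cu}$ at scale $6\eta$ with constant $\kappa$ (Lemma \ref{lem:lps-mane}) to decompose each $y \in \Gamma_{6\eta}(x)$ as follows: there is a unique $z_y = W^{cs}_{\kappa\cdot 6\eta}(x) \cap W^{cu}_{\kappa\cdot 6\eta}(y)$. Thus
\[
\Gamma_{6\eta}(x) \subset \bigcup_{z \in Z} \bigl(W^{cu}_{\kappa\cdot 6\eta}(z) \cap B_n(x,6\eta)\bigr), \qquad Z := \{z_y : y \in \Gamma_{6\eta}(x)\} \subset W^{cs}_{\kappa\cdot 6\eta}(x).
\]
For each fixed $z$, one applies Lemma \ref{lem:Wcu-span} (after subdividing the radius-$\kappa\cdot 6\eta$ leaf into a bounded number of radius-$6\eta$ pieces) to produce an $(n,\delta/2)$-spanning set for $W^{cu}_{\kappa \cdot 6\eta}(z) \cap B_n(x,6\eta)$ of cardinality $O(\lambda^{2n})$, independently of $z$.

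For the set $Z$ of $W^{cs}$-coordinates, I would apply the symmetric argument to $g^{-1}$: the proof of Lemma \ref{lem:Wcu-span} carries over verbatim with $g$ replaced by $g^{-1}$ and the roles of $W^{cs}$ and $W^{cu}$ swapped, using that $\lambda$ also bounds the expansion rate of $g$ along $W^{cs}$ (since $\lambda \geq \lambda_{cs}(g)$). The backward Bowen-ball condition on $y$ together with $z_y \in W^{cu}_{\kappa\cdot 6\eta}(y)$ forces $g^{-k} z_y$ to stay within a controlled neighbourhood of $g^{-k}x$ along $W^{cs}$ for $0\le k \le n$, placing $Z$ in the backward $W^{cs}$-Bowen ball at scale comparable to $\eta$. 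The resulting $(n,\delta/2)$-spanning estimate for $Z$ has the form $O(\lambda^{4n})$; the looser exponent absorbs the crude $g$- and $g^{-1}$-Lipschitz bound used inside the bad region $B(q,\rho)\cup B(q',\rho)$ where $W^{cs}$ need not be uniformly expanded by $g^{-1}$, together with the scale-change factors that arise from converting between leaf-distance and ambient-distance and from iterating the cruder global bound on $\|Dg\|$ to compare $W^{cs}$- and $W^{cu}$-distances.

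Combining the two spanning sets via the local product structure then yields an $(n,\delta)$-spanning set for $\Gamma_{6\eta}(x)$ of cardinality $O(\lambda^{6n})$, which gives $h_g^*(6\eta)\le 6\log\lambda$. The main obstacle is the second step: verifying that even though $W^{cs}$-leaves are not uniformly contracted by $g$ everywhere (they expand inside $B(q,\rho)$), the backward iteration along $W^{cs}$ still spans the relevant set with exponential growth rate at most $4\log\lambda$. Once the Lemma \ref{lem:Wcu-span} proof is re-examined with the global derivative bound $\lambda$ in place of the pointwise bound, the exponential growth is controlled by $\lambda$ alone, and the remainder of the argument is bookkeeping with the scales $\rho'$, $\rho''$, $6\eta$, and $\kappa$.
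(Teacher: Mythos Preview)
Your overall plan—split $\Gamma_{6\eta}(x)$ via local product structure into a $W^{cu}$-piece handled by Lemma~\ref{lem:Wcu-span} and a $W^{cs}$-piece $Z\subset W^{cs}_{\kappa\cdot 6\eta}(x)$ handled separately—is reasonable, but the proposed treatment of $Z$ has a genuine gap. You need to span $Z$ in the \emph{forward} metric $d_n^g$, yet the symmetric version of Lemma~\ref{lem:Wcu-span} for $g^{-1}$ would only produce a spanning set in the \emph{backward} metric $d_n^{g^{-1}}$, and there is no mechanism for converting one into the other. Worse, the hypothesis of that symmetric lemma is not available: you claim $Z$ lies in a backward Bowen ball because $z_y\in W^{cu}_{\kappa\cdot 6\eta}(y)$ and $y\in\Gamma_{6\eta}(x)$, but $g^{-1}$ can expand $W^{cu}$ by up to $\lambda$ per step, so $d(g^{-k}z_y,g^{-k}y)$ may grow like $\lambda^k$ and the backward orbit of $z_y$ need not stay near that of $x$ at all. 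The asserted exponent $O(\lambda^{4n})$ for $Z$ is thus unsupported.

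The paper sidesteps these issues by not invoking backward dynamics. It simply takes an $\alpha$-dense set $E$ in $\Gamma_{6\eta}(x)\subset x+[-6\eta,6\eta]^4$ with $\alpha=\delta/(\kappa\lambda^n)$, costing $\#E\le C\lambda^{4n}$, and for each $z\in E$ applies Lemma~\ref{lem:Wcu-span} to $W^{cu}_{\kappa\alpha}(z)\cap B_n(x,6\eta)$, costing a further $\lambda^{2n}$. The union is $(n,2\delta)$-spanning because for any $y\in\Gamma_{6\eta}(x)$ one picks $z\in E$ with $d(y,z)<\alpha$, takes $\bar z\in W^{cs}_{\kappa\alpha}(y)\cap W^{cu}_{\kappa\alpha}(z)$, and uses only the forward bound $\|Dg|_{E^{cs}}\|\le\lambda$ to get $d_n(y,\bar z)\le\kappa\alpha\lambda^n=\delta$. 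Your decomposition can in fact be repaired along the same lines: instead of appealing to $g^{-1}$, take an $\alpha$-dense set in the $2$-dimensional leaf $W^{cs}_{\kappa\cdot 6\eta}(x)$ with $\alpha\sim\delta\lambda^{-n}$; this costs only $O(\lambda^{2n})$ and would yield the sharper bound $h_g^*(6\eta)\le 4\log\lambda$, though the combining step then needs more care than the paper's cruder but cleaner $4$-dimensional grid.
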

\begin{proof}
Given $x\in \TT^4$ and $\delta>0$, we estimate $\Lspan_n(\Gamma_{6\eta}(x),0,2\delta;g)$ for $n\in \NN$.  To do this, we start by fixing
\begin{equation}\label{eqn:alpha-small}
\alpha = \alpha(n) = \frac \delta{\kappa \lambda^n}
\end{equation}
where $\kappa$ is from the local product structure.
Let $E\subset \Gamma_{6\eta}(x)$ be an $\alpha$-dense set with cardinality 
\[
\#E \leq (12\eta/\alpha)^4 = (12\eta)^4 \kappa^4 \lambda^{4n} \delta^{-4};
\]
note that such a set exists because $\Gamma_{6\eta}(x)$ is contained in $x + [-6\eta,6\eta]^4$.

Now we have $W_{\kappa\alpha}^{cu}(z) \subset W_{6\eta}^{cu}(z)$ for each $z\in E$, so by Lemma \ref{lem:Wcu-span}, there is an $(n,\delta)$-spanning set $E_z$ for $W_{\kappa\alpha}^{cu}(z) \cap B_n(x,6\eta)$ with 
$$\#E_z \leq 32(6\eta)^2 \delta^{-2} \lambda^{2n}.$$  Let $E' = \bigcup_{z\in E} E_z$, then we have
\[
\# E' \leq 32(12\eta)^6 \delta^{-6} \kappa^4 \lambda^{6n}.
\]
We claim that $E'$ is $(n,2\delta)$-spanning for $\Gamma_{6\eta}(x)$, which will complete the proof of Lemma \ref{hexp}.  To see this, take any $y\in \Gamma_{6\eta}(x)$, and observe that because $E$ is $\alpha$-dense in $B(x,6\eta)$, there is $z = z(y)\in E \cap B(y,\alpha)$.  By the local product structure there is $\bar z = \bar z(y) \in W^{cs}_{\kappa\alpha}(y) \cap W^{cu}_{\kappa\alpha}(z)$.  Notice that because distance expansion along $W^{cu}$ is bounded above by $\lambda$ for each iteration of $g$, we have
\begin{equation}\label{eqn:dnyz'}
d_n(y,\bar z) < \kappa \alpha \lambda^n = \delta.
\end{equation}
By our choice of $E_z$, there is $z'\in E_z$ such that $d_n(z',\bar z) < \delta$.  Thus $d_n(y,z') < 2\delta$, as required.  It follows that
\[
\Lspan_n(\Gamma_{6\eta}(x),0,2\delta;g) \leq 32(12\eta)^6 \delta^{-6} \kappa^4 \lambda^{6n},
\]
hence $h_g^*(6\eta) \leq 6\log \lambda$, which proves Lemma \ref{hexp}.
\end{proof}

\begin{lem} \label{BV-satisfieshyp}
For every $r > \gamma(g)$ and $\eps = \sixtythree\rho'$, the diffeomorphism $g$ satisfies Condition \ref{E}.
\end{lem}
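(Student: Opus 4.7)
The plan is to take an arbitrary $y \in \Gamma_\eps(x)$ with $\eps = \sixtythree\rho'$ and, under the two density hypotheses on $x$, show $y = x$.  (We interpret the indicator $\chi_q$ of Condition \ref{E} in the present context as the indicator of $\TT^4\setminus B(q,\rho''+\rho)$, matching the $\chi$ used for the $\GGG,\PPP,\SSS$ decomposition.)

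First, I would apply Lemma~\ref{lem:lps-mane} at scale $\eps$ to the pair $(x,y)$ to produce the unique intersection
$z \in W^{cs}_{\kappa\eps}(x) \cap W^{cu}_{\kappa\eps}(y)$,
and note $\kappa\eps = \rho''$. By $g$-invariance of the foliations and local-product uniqueness applied at each iterated pair $(g^n x, g^n y)$, this gives $d_{cs}(g^n x, g^n z) \leq \rho''$ and $d_{cu}(g^n y, g^n z) \leq \rho''$ for every $n \in \ZZ$.

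Second, I would transfer the density condition from $x$ to the relevant leaf segments. Because $\eps < \rho''$ and the leaf segments have length $\leq \rho''$, whenever $g^n x \notin B(q,\rho''+\rho)$ the entire $W^{cs}$-segment from $g^n x$ to $g^n z$ lies outside $B(q,\rho)$, so the pointwise bound $\|Dg|_{E^{cs}}\| \leq \lambda_s$ applies along it; an analogous statement holds for $q'$ and the $W^{cu}$-segment from $g^n y$ to $g^n z$.

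Third, I would establish $d_{cs}(g^{n_k} x, g^{n_k} z) \to 0$ and $d_{cu}(g^{-m_k} y, g^{-m_k} z) \to 0$. Starting from $L^{cs}_0 := d_{cs}(x,z) \leq \rho''$, Lemma \ref{lem:large-scale} handles an initial phase where $L^{cs}_n > \rho'$, contracting it at rate $\theta_{cs}$ down to $L^{cs}_{i_0} \leq \rho'$ in at most $O(\log(\rho''/\rho'))$ steps (a number independent of $k$). Once $L^{cs}_n \leq \rho'$, the pointwise derivative bound combined with $S_{n_k}\chi_q(x) \geq r n_k$ gives, exactly as in the proof of Lemma~\ref{unifexpansion} with $M = 0$,
\[
L^{cs}_{n_k} \leq \lambda_s^{rn_k - O(1)}\lambda_{cs}^{(1-r)n_k + O(1)} L^{cs}_{i_0} \leq C\,\theta_{r'}^{n_k}
\]
for any fixed $r' \in (\gamma, r)$.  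Since $r > \gamma$ forces $\theta_{r'} < 1$, we obtain $L^{cs}_{n_k} \to 0$.  The $L^{cu}$ bound is symmetric, using the backward density on $x$ transferred to $y$ through the $\eps$-shadowing.

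Fourth, I would conclude $y = x$ using the Anosov shadowing semiconjugacy $\pi\colon \TT^4 \to \TT^4$ with $\pi \circ g = \fB \circ \pi$ and $d(\pi(w),w) \leq \eta$ supplied by Lemma~\ref{shadowinglemma} (cf.~\S\ref{sec:perturbation-sums}). Taking $\rho$ small enough that $\eps < \eta$, the expansivity of $\fB$ at scale $3\eta$ forces $\pi(x) = \pi(y)$; and the forward/backward contractions above, combined with continuity of $\pi$ and the characterization of stable/unstable leaves of $\fB$ via exponential convergence, give $\pi(z) \in W^s_\fB(\pi(x)) \cap W^u_\fB(\pi(x)) = \{\pi(x)\}$ locally. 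The two density hypotheses preclude $x$'s orbit from being trapped in the bifurcated neighborhoods of $q$ or $q'$ where $\pi$ collapses multiple $g$-orbits to a single $\fB$-orbit, so $\pi^{-1}(\pi(x)) \cap \Gamma_\eps(x) = \{x\}$ and therefore $y = x$.

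I expect the main obstacle to be the final step.  The classical Anosov expansivity argument would pin down $y = z = x$ by using uniform forward expansion of $W^{cu}$ (to force $z = y$) and uniform backward expansion of $W^{cs}$ (to force $z = x$); in our non-uniformly hyperbolic setting neither is available, since the one-sided density hypotheses in~\ref{E} only control $W^{cs}$ forward and $W^{cu}$ backward. Passing through the Anosov semiconjugacy avoids this, but then requires a careful verification---using the explicit structure of the Bonatti--Viana construction near $q$ and $q'$---that $\pi$ is injective on orbits satisfying the density hypotheses; this is the technical heart of the proof.
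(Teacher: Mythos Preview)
Your first three steps set things up correctly and parallel the paper's opening moves: local product structure at scale $\eps$ produces the intersection point, and invariance of the foliations together with uniqueness keeps it in the $\kappa\eps$-local leaves for all $n\in\ZZ$.  You are also right that steps 1--3 only yield $d_{cs}(g^{n_k}x,g^{n_k}z)\to 0$ and $d_{cu}(g^{-m_k}y,g^{-m_k}z)\to 0$, i.e.\ convergence at the \emph{far} times rather than at time $0$.  The genuine gap is step~4.  The assertion that ``the two density hypotheses preclude $x$'s orbit from being trapped \dots\ so $\pi^{-1}(\pi(x))\cap\Gamma_\eps(x)=\{x\}$'' is exactly the content of the lemma: since $\Gamma_\eps(x)\subset\pi^{-1}(\pi(x))$ whenever $\eps<\eta$, asking for $\pi^{-1}(\pi(x))\cap\Gamma_\eps(x)=\{x\}$ is asking for $\Gamma_\eps(x)=\{x\}$.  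The information from step~3 (that $\pi(z)=\pi(x)$) is already implied by $z\in\Gamma_\eps(x)$, so the detour through the semiconjugacy adds nothing.

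The paper avoids the semiconjugacy entirely; the missing idea is \emph{Pliss' lemma}.  From the one-sided hypothesis $\tfrac{1}{n_k}S_{n_k}^g\chi(x)\geq r$ one extracts, for any fixed $r'\in(\gamma,r)$, a sequence $n_k'\to\infty$ along which the density bound $\geq r'$ holds on \emph{every} sub-segment ending at time $n_k'$, not merely on $[0,n_k]$ as a whole.  This uniform control gives derivative estimates of the form $\|Dg^{-n}|_{E^{cu}}\|\leq\theta_{r'}^n$ for every $0\leq n\leq n_k'$ at points near $g^{n_k'}x$.  With the leaf choice $x''\in W^{cu}_{\kappa\eps}(x)\cap W^{cs}_{\kappa\eps}(y)$ (note: the \emph{opposite} of yours), one then pulls back from time $n_k'$ to time $0$:
\[
d_{cu}(x,x'')\;\leq\;\theta_{r'}^{n_k'}\,d_{cu}(g^{n_k'}x,g^{n_k'}x'')\;\leq\;\theta_{r'}^{n_k'}\kappa\eps\;\to\;0,
\]
so $x''=x$.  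A symmetric Pliss argument on the backward hypothesis then gives $d_{cs}(x,y)\leq\theta_{r'}^{m_k'}\kappa\eps\to 0$, hence $y=x$.  The leaf choice matters here: with your $z\in W^{cs}(x)$, the forward density only contracts $d_{cs}$ in the forward direction, and no Pliss-type argument can reverse that to a conclusion at time~$0$.
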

\begin{proof}
Suppose $x\in \TT^4$, $r>0$, and $n_k,m_k\to \infty$ are such that
\begin{equation}\label{eqn:nkmk}
\tfrac 1{n_k} S_{n_k}^g \chi(x) \geq r,
\qquad
\tfrac 1{m_k} S_{m_k}^{g^{-1}} \chi'(x) \geq r
\end{equation}
for every $k$.  Our goal is to show that $\Gamma_{\eps}(x) = \{x\}$.

First we fix $r' \in (\gamma,r)$ and observe that by Pliss' lemma \cite{Pliss} there are $m_k', n_k'\to\infty$ such that
\begin{equation}\label{eqn:nkmk'}
\begin{aligned}
S_m^g\chi'(g^{-m_k'}x) &\geq mr' \text{ for every } 0 \leq m \leq m_k', \\
S_n^{g^{-1}}\chi(g^{n_k'}x) &\geq nr' \text{ for every } 0 \leq n \leq n_k'.
\end{aligned}
\end{equation}
As in the proof of Lemma \ref{unifexpansion},
for every $y\in B_{m_k'}(g^{-m_k'}x,\rho'')$ and $z\in g^{n_k'} B_{n_k'}(x,\rho'')$, we now have
\begin{equation}\label{eqn:nkmk''}
\begin{aligned}
\|Dg^m(y)|_{E^{cs}}\| &\leq \theta_{r'}^m \text{ for every } 0 \leq m \leq m_k', \\
\|Dg^{-n}(z)|_{E^{cu}}\| &\leq \theta_{r'}^m \text{ for every } 0 \leq n \leq n_k',
\end{aligned}
\end{equation}
where $\theta_{r'} < 1$ is as in \eqref{eqn:theta-r-bv}.

Now let $x'\in \Gamma_{\eps}(x)$.  By the local product structure, and $\epsilon$ being not too large, there is a unique point $x'' \in W_{\kappa\eps}^{cu}(x) \cap W_{\kappa\eps}^{cs}(x')$. Applying $g$ we see that
\[
g(x'') \in W_{\kappa \eps\|Dg\|}^{cs}(gx) \cap W_{\kappa \eps\|Dg^{-1}\|}^{cu}(gx').
\]
But by the local product structure, $W_{\kappa \eps \|Dg\|}^{cs}(gx)$ and $W_{\kappa \eps\|Dg^{-1}\|}^{cu}(gx')$ have a unique intersection point if $\max \{\kappa \eps \|Dg\|, \kappa \eps\|Dg^{-1}\|\}< 6 \eta$. 
Thus $g(x'')$ is the unique intersection point, and since $d(gx, gx')\leq \eps$, it follows that
$g(x'') \in W_{\kappa \eps}^{cs}(gx) \cap W_{\kappa \eps}^{cu}(gx')$.
Iterating the above argument gives for every $n\in \ZZ$,
\begin{equation}\label{eqn:gnx''}
g^n(x'') \in W_{\kappa\eps}^{cu}(g^n x) \cap W_{\kappa\eps}^{cs}(g^n x').
\end{equation}
In particular, for each $k\in \NN$ we can apply \eqref{eqn:nkmk''} with $z$ a point along the $W^{cu}$-geodesic from $g^{n_k'}x$ to $g^{n_k'}x''$, and deduce that
\[
d_{cu}(x,x'') \leq \theta_{r'}^{n_k'} d_{cu}(g^{n_k'}x,g^{n_k'}x'') \leq \theta_{r'}^{n_k'} \kappa\eps.
\]
Sending $k\to\infty$ gives $d_{cu}(x,x'') = 0$ and hence $x''=x$ since $x''\in W^{cu}_{\kappa\eps}(x)$.  Now by \eqref{eqn:gnx''} we have $g^n x\in W_{\kappa\eps}^{cs}(g^nx')$ for all $n\in \ZZ$, and for each $k\in \NN$ we can apply \eqref{eqn:nkmk''} with $y$ a point along the $W^{cs}$-geodesic from $g^{-m_k'}x$ to $g^{-m_k'}x'$, obtaining
\[
d_{cs}(x,x') \leq \theta_{r'}^{m_k'} d_{cs}(g^{-m_k'}x, g^{-m_k'}x') \leq \theta_{r'}^{m_k'} \kappa\eps.
\]
Again, as $k$ increases we get $d_{cs}(x,x')=0$ hence $x'=x$, which completes the proof of Lemma \ref{BV-satisfieshyp}.
\end{proof}

\subsection{Verification of Theorem \ref{t.BV}} \label{verification}
We now have all the ingredients to show that if $g\in \VVV(\fbv)$ and $\ph\colon \TT^4\to \RR$ satisfy the hypotheses of Theorem \ref{t.BV}, then the conditions of Theorem \ref{t.generalBV} are satisfied, and hence there is a unique equilibrium state for $(\TT^4,g,\ph)$.  

We define the decomposition $(\PPP,\GGG, \SSS)$ as in Lemma \ref{eqn:theta-r-bv}.  In Lemma \ref{lem:GMspec}, we showed that
$\GGG^M$ has tail specification at scale $3\rho'$, so condition \ref{c.spec} of Theorem \ref{t.generalBV} holds. In Lemma \ref{bowenpropBV}, we showed that $\ph$ has the Bowen property on $\GGG$ at scale $\sixtythree \rho'$, so condition \ref{c.bowen} of Theorem \ref{t.generalBV} holds. We have $P(\PPP \cup \SSS, \ph, 6\eta)= \max \{ P(\PPP, \ph, 6\eta),  P(\SSS, \ph, 6\eta)\}$ and both collections satisfy the hypotheses of Theorem \ref{coreestimateallscales}, and thus we have the upper bound
$$
(1-r) \sup_{x\in Q}\ph(x) + r( \sup_{x\in {\TT^4}}\ph(x) + h + \log L) +H(2r),
$$
and $r$ can be chosen arbitrarily close to $\gamma$. By Lemma \ref{hexp}, $h_g^*(6\eta)<6 \log \lambda$, so by Theorem \ref{coreestimateallscales}, $P(\PPP \cup \SSS, \ph)$ is bounded above by
\[
6 \log \lambda+(1-r) \sup_{x\in Q}\ph(x) + r( \sup_{x\in \TT^4}\ph(x) + h + \log L) + H(2r).
\]
Thus, the hypothesis of Theorem \ref{t.BV} gives that 
\[
P(\PPP \cup \SSS, \ph) + \Var(\ph,\sixtythree\rho') < P(\varphi; g),
\]
which verifies condition \ref{c.gap} of Theorem \ref{t.generalBV}. Finally,  by Theorem \ref{expansivityestimate} and Lemma \ref{BV-satisfieshyp}, we have $\Pexp(\ph,\sixtythree\rho') \leq P(\PPP\cup \SSS,\ph) < P(\ph; g)$.

Combining these ingredients, we see that under the conditions of Theorem \ref{t.BV}, 
all the hypotheses of Theorem \ref{t.generalBV} are satisfied for the decomposition $(\PPP, \GGG, \SSS)$. 
This completes the proof of Theorem \ref{t.BV}.

\section{SRB measures and proof of theorem \ref{main3}}\label{s.srb}
An \emph{SRB measure} for a $C^2$ diffeomorphism $f$ is an ergodic invariant measure $\mu$ that is hyperbolic (non-zero Lyapunov exponents) and has absolutely continuous conditional measures on unstable manifolds \cite[Chapter 13]{BP07}. We assume that $g$ is a $C^2$ diffeomorphism in a $C^1$ neighborhood of a Bonatti--Viana diffeomorphism $\fbv \in \UUU_{\lambda, \rho}$ with $\log \lambda$ and $\rho$ not too large. Explicit bounds required on the parameters for $\fbv$ are given at \eqref{srbgap}.

\subsection{Geometric potential}

As we will see, the potential 
$\phigeo(x):= -\log \mathrm{det}(Dg|_{E^{cu}}(x))$ 
has the property that its unique equilibrium state is the physical SRB measure; the potential with this property is often referred to as the \emph{geometric potential} \cite{IT10,GS14}.}
It is a folklore result that a $C^2$ diffeomorphism with a dominated splitting has H\"older continuous distributions, so that the geometric potential is H\"older continuous. However, to the best of our knowledge a proof has never appeared in the literature. For diffeomorphisms of surfaces, this result is given in \cite{PS2009}. The idea of proof for the general result is to modify the $C^r$ section theorem from Hirsch, Pugh and Shub \cite{HPS}. 
In the appendix, we give a direct proof that $\phigeo$ has the Bowen property on $\GGG$, without using (or showing) H\"older continuity of the distribution.

\subsection{Non-negativity of pressure}\label{sec:nonneg} 

We prove a general result on non-negativity of pressure for the geometric potential associated to an invariant foliation. Let $M$ be a compact Riemannian manifold and $W$ be a $C^0$ foliation of $M$ with $C^1$ leaves.  Suppose there is $\delta>0$ such that
\begin{equation}\label{eqn:bdd-leaf-volume}
\sup_{x\in M} m_{W(x)}(W_\delta(x)) < \infty,
\end{equation}
where $m_{W(x)}$ denotes volume on the leaf $W(x)$ with the induced metric.


\begin{lem}\label{lem:Pgeq0}
Let $W$ be a foliation of $M$ as above, with $\delta>0$ such that \eqref{eqn:bdd-leaf-volume} holds.  Let $f\colon M\to M$ be a diffeomorphism and let $\psi(x) = -\log|\det Df(x)|_{T_x W(x)}|$.  Then $P(\psi; f)\geq 0$.
\end{lem}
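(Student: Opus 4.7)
The plan is to prove $P(\psi;f) \geq 0$ by exhibiting, for each small $\eps > 0$, $(n,\eps)$-separated subsets of $M$ whose partition sums $\sum e^{S_n\psi}$ decay no faster than $e^{-n\Var(\psi,\eps)}$. Since $\psi$ is continuous, $\Var(\psi,\eps) \to 0$ as $\eps \to 0$, so this forces $P(\psi,\eps;f) \geq -\Var(\psi,\eps)$ and hence $P(\psi;f) \geq 0$ in the limit.

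Fix $x_0 \in M$ and set $B_0 := W_\delta(x_0)$; by \eqref{eqn:bdd-leaf-volume} this has finite leaf-volume, and $m_0 := m_{W(x_0)}(B_0) > 0$ since leaves are $C^1$ submanifolds of positive dimension. For each $n \in \NN$ I would choose a maximal $(n,\eps)$-separated set $E_n \subset B_0$; by maximality, the Bowen balls $\{B_n(y,\eps)\}_{y \in E_n}$ cover $B_0$, so $m_0 \leq \sum_{y \in E_n} m_W(A_y)$ with $A_y := B_n(y,\eps) \cap B_0$. The central computation is the change-of-variables formula along the leaf $W(x_0)$: iterating $\psi = -\log|\det(Df|_{TW})|$ gives $|\det(Df^n(z)|_{T_z W})| = e^{-S_n\psi(z)}$, hence for any measurable $A \subset W(x_0)$,
\[
m_W(f^n A) = \int_A e^{-S_n\psi(z)}\,dm_W(z).
\]
Applied to $A = A_y$ together with the uniform continuity bound $|S_n\psi(z) - S_n\psi(y)| \leq n\Var(\psi,\eps)$ for $z \in B_n(y,\eps)$, this yields
\[
m_W(A_y) \leq e^{n\Var(\psi,\eps)}\, e^{S_n\psi(y)}\, m_W(f^n A_y).
\]
Summing over $y \in E_n$ then gives $\Lsep_n(M,\psi,\eps;f) \geq \sum_{y} e^{S_n\psi(y)} \geq m_0 V^{-1} e^{-n\Var(\psi,\eps)}$, provided we have a uniform upper bound $m_W(f^n A_y) \leq V$ independent of $y$ and $n$.

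The main obstacle is establishing this uniform volume bound. The image $f^n A_y$ lies in $W(f^n y) \cap B(f^n y,\eps)$, and when the leaf $W(f^n y)$ is dense in $M$ (as in our Bonatti--Viana setting) this intersection can have infinitely many connected components of unbounded total volume, so the hypothesis \eqref{eqn:bdd-leaf-volume}---which concerns only intrinsic leaf-balls $W_\delta$---does not apply directly. I would handle this by decomposing $A_y$ into its connected components in $W(x_0)$: each such component has connected image under $f^n$, hence lies in a single connected component of $W(f^n y) \cap B(f^n y,\eps)$, and by compactness of $M$ combined with the $C^1$ structure of the leaves, for $\eps$ sufficiently small each such component is contained in an intrinsic leaf-ball $W_\delta(p)$ of volume at most $V_\delta := \sup_x m_W(W_\delta(x)) < \infty$. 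Controlling the total volume $m_W(f^n A_y)$ uniformly in $y$ and $n$ is the technical heart of the argument, which I would address using the $C^0$ regularity of the foliation and local foliation charts. Once the uniform bound $V$ is in place, the above lower bound on $\Lsep_n(M,\psi,\eps;f)$ gives $P(\psi,\eps;f) \geq -\Var(\psi,\eps)$, and sending $\eps \to 0$ completes the proof.
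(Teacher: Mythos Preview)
Your strategy coincides with the paper's: cover a fixed local leaf $V=W_\delta(x_0)$ by Bowen balls centered at a maximal $(n,\eps)$-separated subset, bound the leaf-volume of each piece via the change-of-variables formula, and invoke the hypothesis \eqref{eqn:bdd-leaf-volume} to control the volume of the images. You correctly isolate the obstacle---that $f^n A_y$ may have many connected components, so a bound on intrinsic leaf balls $W_\delta$ does not directly bound its total volume---but you miss the one-line resolution.

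The paper simply discards all but one component. Replace $A_y=B_n(y,\eps)\cap B_0$ by $B_n^W(y,\eps)$, the connected component of $W(y)\cap B_n(y,\eps)$ containing $y$. Its image $f^n B_n^W(y,\eps)$ is then a \emph{connected} subset of $W(f^n y)$ that contains $f^n y$ and lies inside the extrinsic ball $B(f^n y,\eps)$; for $\eps\leq\delta$ this forces it inside the intrinsic leaf ball $W_\delta(f^n y)$, and \eqref{eqn:bdd-leaf-volume} gives the uniform volume bound at once---no sum over components, no further appeal to foliation charts. The paper then covers $V$ by the sets $B_n^W(y,\eps)$ (rather than the full $A_y$) using maximality of the separated set, after which your change-of-variables computation goes through verbatim with $A_y$ replaced by $B_n^W(y,\eps)$. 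So the ``technical heart'' you anticipate dissolves once you restrict to the single component through $y$.
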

\begin{proof}
Note that $\psi$ is continuous because $f$ is $C^1$ and $W$ is $C^0$.  Thus for every $\eps>0$, there is $\delta>0$ such that $d(x,y)<\delta$ implies
\begin{equation}\label{eqn:Ju-close}
|\psi(x) - \psi(y)| < \eps.
\end{equation}
Decreasing $\delta$ if necessary, we can assume that \eqref{eqn:bdd-leaf-volume} holds.  Now for every $x\in M$ and every $y\in B_n(x,\delta)$, we have
\begin{equation}\label{eqn:SnJu-close}
|\det Df^n(y)|_{T_y W(y)}| \geq e^{-\eps n} e^{-S_n\psi(x)}.
\end{equation}
Writing $B_n^W(x,\delta)$ for the connected component of $W(x) \cap B_n(x,\delta)$ containing $x$, we get
\[
m_{W(f^nx)}(f^n B_n^W(x,\delta)) \geq e^{-\eps n} e^{-S_n\psi(x)} m_{W(x)} B_n^W(x,\delta).
\]
Since $f^n B_n^W(x,\delta) \subset W_\delta(f^nx)$, we write $C$ for the quantity in \eqref{eqn:bdd-leaf-volume} and get
$m_{W(x)} B_n^W(x,\delta) \leq Ce^{\eps n} e^{S_n\psi(x)}$ for every $x,n$.

Now let $V$ be a local leaf of $W$.  Given $n\in \NN$, let $Z_n$ be a maximal $(n,\delta)$-separated subset of $V$.  Then $V \subset \bigcup_{x\in Z_n} B_n^W(x,\delta)$, and so 
\[
m_V(V) \leq \sum_{x\in Z_n} m_V B_n^W(x,\delta) \leq \sum_{x\in Z_n} C e^{\eps n} e^{S_n\psi(x)} \leq C e^{\eps n} \Lsep_n(\psi,\delta).
\]
We conclude that $P(\psi; f) \geq P(\psi, \delta; f) \geq -\eps$, and since $\eps>0$ was arbitrary this shows that $P(\psi; f) \geq 0$.
\end{proof}
We claim that Property \eqref{eqn:bdd-leaf-volume} holds for the center-unstable foliation $W^{cu}$ of $g$.  Indeed, each local leaf $W_\delta(x)$ is the graph of a function $\psi\colon F^u\to F^s$ with $\|D\psi\| \leq \beta$, and writing $W_\delta'(x) \subset F^u$ for the projection of $W_\delta(x)$ to $F^u$ along $F^s$, we see that
\begin{enumerate}
\item $W_\delta(x) = (\Id + \psi)(W_\delta'(x))$,
\item $W_\delta'(x)$ is contained inside a ball of radius $\delta(1+\beta)$ in $F^u$, and 
\item $m_{W(x)} W_\delta(x) \leq (1+\|D\psi\|) m_{F^u} W_\delta'(x) \leq (1+\beta) \pi (\delta(1+\beta))^2$.
\end{enumerate}
Thus, we conclude that $P( \phigeo; g)\geq 0$.


\subsection{Negativity of $\Phi(\phigeo; g)$} \label{srbnegbadpressure}
We show that $\Phi(\phigeo; g) < 0$ as long as the parameters in the Bonatti--Viana construction are chosen small. 

Observe that $\sup_{x\in \TT^4}\phigeo(x) \approx \log \lambda - \log\lambda_4$ and $\inf_{x\in \TT^4} \phigeo(x) \approx -(\log\lambda_3 + \log \lambda_4)$. More precisely, given $\epsilon > 0$, we can choose $g$ in a sufficiently small $C^1$ neighbourhood of $\fbv$ 
so that $\sup_{x\in \TT^4}\phigeo(x) \leq \log \lambda - \log\lambda_4 + \epsilon$, and $\inf_{x\in \TT^4} \phigeo(x) \geq -(\log\lambda_3 + \log \lambda_4) - \epsilon$. Thus,
\begin{align*}
\sup\phigeo + \Var(\phigeo,300\rho')  &\leq 2\sup \phigeo - \inf\phigeo \\   &\leq 2\log\lambda + \log\lambda_3 - \log\lambda_4 + 2 \epsilon.
\end{align*}
Thus, we have
\begin{align*}
\Phi(\phigeo;g) &\leq 6\log \lambda + \sup \phigeo  + \gamma( \log L + h) +H(\gamma)  + V \\
& \leq (\log\lambda_3 - \log\lambda_4) + 8\log \lambda + \gamma( \log L + h) +H(\gamma) + 2 \epsilon,
\end{align*}
where, since $\lambda_4> \lambda_3 >1$, the first term is a negative number, and the other terms can be made small. Thus, $\Phi(\phigeo;g) <0$. To be more precise, if $\lambda(\fbv)$ is chosen small enough so that
\begin{equation}\label{srbgap}
8\log \lambda + \gamma( \log L + h) +H(\gamma) < \log \lambda_4 - \log \lambda_3,
\end{equation}
then a sufficiently small $C^1$ perturbation of $\fbv$ satisfies  $\Phi(\phigeo_g;g)<0$. 

Since $\Phi(\phigeo; g) < 0 \leq P(\phigeo; g)$, we can apply Theorem \ref{main2}, and we obtain that $\phigeo$ has a unique equilibrium state.

\subsection{Proof that $\Phi(t\phigeo; g) < P(t\phigeo; g)$ for $t\in[0,1]$} \label{pressuregaptphigeo}
We show that the pressure bound $\Phi(t \phigeo; g) < P( t\phigeo; g)$ for all $t\in [0,1]$ as long as \eqref{srbgap} holds. Since the equality is strict, it will persist for all $t$ in a neighborhood of $[0,1]$. We give linear bounds for $P(t \phigeo; g)$ and  $\Phi(t\phigeo;g)$. First observe that, by the variational principle, 
\begin{align*}
P(t\phigeo; g) &\geq \htop(g) + t \inf \phigeo \\
& \geq \htop(g)- t(\log\lambda_3 + \log \lambda_4+ \eps)  
\end{align*}
Since there is a semi-conjugacy between $g$ and $\fB$, $\htop(g) \geq \htop(\fB) = \log\lambda_3 + \log \lambda_4$. 
Thus, letting $a_1 = \log\lambda_3 + \log \lambda_4$, and
\[
l_1(t) = a_1-t(a_1+ \epsilon),
\]
we have $P(t\phigeo; g) \geq l_1(t)$ and $l_1(t) \geq 0$ whenever $t \leq \frac{a_1}{a_1+ \epsilon}$.

Now, for $\Phi(t\phigeo; g)$, the argument of \S \ref{srbnegbadpressure} shows that
\[
\Phi(t \phigeo;g) \leq  t(\log\lambda_3 - \log\lambda_4  + 2 \epsilon) + 8\log \lambda + \gamma( \log L + h) +H(\gamma).
\]
Thus, letting $a_2 = \log \lambda_4 - \log \lambda_3$ and $r= 8\log \lambda + \gamma( \log L + h) +H(\gamma)$, and 
\[
l_2(t) = r -t(a_2-2\epsilon),
\]
we have $\Phi(t\phigeo ; g) \leq l_2(t)$, and the root of $l_2(t)$ is $ t^\ast =\frac{r}{a_2-2\epsilon}$. Now suppose that 
\begin{equation} \label{srbcriteria2}
\frac{r}{a_2-2\epsilon} < \frac{a_1}{a_1+ \epsilon},
\end{equation}
and that $r<a_1$. This is clearly possible since $r$ can be chosen small. These criteria hold for $\epsilon$ small if \eqref{srbgap} holds for $\lambda= \lambda(\fbv)$. Since $l_2(0)< l_1(0)$ and $l_2(t^\ast)=0<l_1(t^\ast)$, then for $t\in[0, t^\ast]$,
\[
\Phi(t\phigeo; g) \leq l_2(t) < l_1(t) \leq P(t \phigeo).
\]
For $t\in (t^\ast, 1]$, we have $\Phi(t\phigeo; g) \leq l_2(t)<0 \leq P(\phigeo) \leq P(t \phigeo)$. The last inequality holds because since $\sup \phigeo< 0$, the function $t\mapsto P(t\phigeo)$ is decreasing.

We conclude that $\Phi(t \phigeo; g) < P(t \phigeo; g)$ for all $t\in [0,1]$, and thus there exists $\epsilon >0$ so $\Phi(t \phigeo; g) < P(t \phigeo; g)$ for all $t\in [-\epsilon,1+\epsilon]$. We apply Theorem 
\ref{t.BV} to these potentials, and we obtain uniqueness of these equilibrium states, which proves (2) of Theorem \ref{main3}.

\subsection{The formula $P( \phigeo; g) =0$ and $\mu_1$ as SRB measure} \label{sec:lyapunov-exponents-for-bonatti-viana-examples}

Given a $C^2$ diffeomorphism $f$ on a $d$-dimensional manifold and $\mu \in \mathcal{M}_e(f)$, let $\lambda_1 < \cdots < \lambda_s$ be the Lyapunov exponents of $\mu$, and let $d_i$ be the multiplicity of $\lambda_i$, so that $d_i = \dim E_i$, where for a Lyapunov regular point $x$ for $\mu$ we have 
\[
E_i(x) = \{0\} \cup \{ v\in T_xM \, :\,  \lim_{n\to\pm \infty} \tfrac 1n \log \|Df^n_x(v)\| = \lambda_i \} \subset T_x M.
\]
Let $k=k(\mu) = \max \{1\leq i\leq s(\mu) \, :\,  \lambda_i \leq 0\}$, and let $\lambda^+(\mu) = \sum_{i>k} d_i(\mu) \lambda_i(\mu)$ be the sum of the positive Lyapunov exponents, counted with multiplicity.

The Margulis--Ruelle inequality  \cite[Theorem 10.2.1]{BP07} gives $h_\mu(f) \leq \lambda^+(\mu)$, and it was shown by Ledrappier and Young \cite{LY} that equality holds if and only if $\mu$ has absolutely continuous conditionals on unstable manifolds. Thus, for any ergodic invariant measure $\mu$, we have
\begin{equation}\label{eqn:nonpos}
h_\mu(f) - \lambda^+(\mu)\leq 0,
\end{equation}
with equality if and only if $\mu$ is absolutely continuous on unstable manifolds.  In conclusion, an ergodic measure $\mu$ is an SRB measure if and only if it is hyperbolic and equality holds in \eqref{eqn:nonpos}.

In this section, we prove that $P(\phigeo; g) \leq 0$.  Combining this with Lemma \ref{lem:Pgeq0} gives that $P(\phigeo ; g) =  0$. 
Recall that in the previous section we used Theorem \ref{t.BV} to show that $\phigeo$ has a unique equilibrium state $\mu$; to show that 
$\mu$ is the SRB measure, we need to show that $\mu$ is hyperbolic and $\lambda^+(\mu) = \int\phigeo\,d\mu$. 

\subsubsection*{Lyapunov exponents for the diffeormorphism g}

Let $\mu$ be ergodic, and let $\lambda_1(\mu) \leq \lambda_2(\mu) \leq \lambda_3(\mu) \leq \lambda_4(\mu)$ be the Lyapunov exponents for $\mu$.
Recall that $E^{cs} \oplus E^{cu}$ is 
$Dg$-invariant, so for every $\mu$-regular $x$ the Oseledets decomposition is a sub-splitting of $E^{cs} \oplus E^{cu}$.
\begin{lem} \label{BV:Ruelle}
For an ergodic measure $\mu$, then
\begin{equation} \label{eqn:phlambda2} 
\int \phigeo\,d\mu \geq - \lambda^+(\mu).
\end{equation}
\end{lem}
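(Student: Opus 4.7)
The plan is to rewrite $\int\phigeo\,d\mu$ in terms of Lyapunov exponents associated to the invariant subbundle $E^{cu}$, and then compare this quantity with $\lambda^+(\mu)$. Since $E^{cu}$ is a $Dg$-invariant continuous subbundle of rank $2$, the function $\psi(x) := \log|\det Dg(x)|_{E^{cu}(x)}| = -\phigeo(x)$ satisfies the cocycle identity $S_n^g\psi(x) = \log|\det Dg^n(x)|_{E^{cu}(x)}|$. Applying Birkhoff's ergodic theorem together with Oseledets' multiplicative ergodic theorem, for $\mu$-a.e.\ Lyapunov regular point $x$ we obtain
\[
\int \psi\,d\mu = \lim_{n\to\infty} \frac{1}{n}\log\bigl|\det Dg^n(x)|_{E^{cu}(x)}\bigr| = \lambda^{cu}(\mu),
\]
where $\lambda^{cu}(\mu)$ denotes the sum (with multiplicity) of the Lyapunov exponents of $\mu$ realized by vectors in $E^{cu}$. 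Equivalently, $\int\phigeo\,d\mu = -\lambda^{cu}(\mu)$.

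Next, I would use the dominated splitting to identify $\lambda^{cu}(\mu)$ as the sum of the top two Lyapunov exponents of $\mu$. Since the splitting $E^{cs}\oplus E^{cu}$ is $Dg$-invariant, uniqueness of the Oseledets decomposition implies that at every Lyapunov regular point both $E^{cs}(x)$ and $E^{cu}(x)$ decompose as direct sums of Oseledets subspaces $E_i(x)$. Iterating the domination inequality $|Dg_x^\ell(u)|/|Dg_x^\ell(v)| \leq 1/2$ for unit vectors $u\in E^{cs}(x)$, $v\in E^{cu}(x)$ gives $|Dg^{k\ell}(u)|/|Dg^{k\ell}(v)| \leq 2^{-k}$, and taking logarithms and dividing by $k\ell$ shows that every Lyapunov exponent realized by a vector of $E^{cs}$ is strictly less than every Lyapunov exponent realized by a vector of $E^{cu}$. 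Because $\dim E^{cu} = 2$, the two Lyapunov exponents (with multiplicity) carried by $E^{cu}$ must therefore be $\lambda_3(\mu)$ and $\lambda_4(\mu)$, and hence $\lambda^{cu}(\mu) = \lambda_3(\mu)+\lambda_4(\mu)$.

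To finish, I would check by a short case analysis on the signs of $\lambda_3(\mu)$ and $\lambda_4(\mu)$ that $\lambda_3(\mu)+\lambda_4(\mu)\leq \lambda^+(\mu)$. If $\lambda_4(\mu)\leq 0$, then both top exponents are non-positive, hence $\lambda_3+\lambda_4\leq 0 = \lambda^+(\mu)$. If $\lambda_3(\mu)\leq 0 < \lambda_4(\mu)$, then $\lambda_3+\lambda_4\leq \lambda_4\leq \lambda^+(\mu)$. Finally, if both $\lambda_3(\mu)$ and $\lambda_4(\mu)$ are positive, they contribute to $\lambda^+(\mu)$ with their full multiplicity, so $\lambda_3+\lambda_4\leq \lambda^+(\mu)$ is immediate. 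Combining these three steps yields
\[
\int\phigeo\,d\mu = -\lambda^{cu}(\mu) = -(\lambda_3(\mu)+\lambda_4(\mu)) \geq -\lambda^+(\mu),
\]
which is the desired inequality. The only non-routine step is the second paragraph, where the strict domination of $E^{cu}$ over $E^{cs}$ is what forces the two $E^{cu}$-exponents to be the top two; the first and third steps are standard consequences of Oseledets/Birkhoff and elementary arithmetic respectively.
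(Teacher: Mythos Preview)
Your proposal is correct and follows essentially the same approach as the paper: both first establish $\int\phigeo\,d\mu = -(\lambda_3(\mu)+\lambda_4(\mu))$ using the dominated splitting (you spell out the ``standard arguments'' the paper only cites), and then do a short case analysis on the signs of the exponents. Your case division by the signs of $\lambda_3,\lambda_4$ is organized slightly differently from the paper's division by the number of positive exponents, but the content is the same.
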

\begin{proof} 
Because $E^{cs} \oplus E^{cu}$ is dominated, standard arguments show that $\int \phigeo\,d\mu = -\lambda_3(\mu) - \lambda_4(\mu)$.  There are three cases.
\begin{enumerate}
\item
If $\mu$ has exactly two positive Lyapunov exponents (counted with multiplicity), then $\int \phigeo\,d\mu =- \lambda^+(\mu)$.
\item
If $\lambda_2(\mu)\geq0$, then $\int \phigeo\,d\mu \geq -\lambda_2(\mu) - \lambda_3(\mu) - \lambda_4(\mu) \geq - \lambda^+(\mu) $.
\item
There is at most one positive Lyapunov exponent. In this case, $-\lambda_3\geq0$, so $\int \phigeo\,d\mu \geq -\lambda_4(\mu) \geq - \lambda^+(\mu)$. \qedhere  
\end{enumerate}
\end{proof}

Let $\mathcal{M}_* \subset \mathcal{M}_e(g)$ be the set of ergodic $\mu$ such that $\mu$ is hyperbolic  and has exactly two positive exponents, so $\lambda_2(\mu) < 0 < \lambda_3(\mu)$.  
\begin{lem} \label{lem:keySRBestimate2}
If $\mu \in \mathcal{M}_e(g) \setminus \mathcal{M}_*$, then
\[
h_\mu(g) -\lambda^+(\mu) \leq h_\mu(g) + \int\phigeo d \mu  \leq \Phi(\phigeo; g) 
\]
\end{lem}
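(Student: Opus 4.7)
The plan is to observe that the stated chain decomposes into two pieces. The first, $h_\mu(g) - \lambda^+(\mu) \leq h_\mu(g) + \int \phigeo\, d\mu$, is equivalent after cancelling $h_\mu(g)$ to $\int \phigeo\, d\mu \geq -\lambda^+(\mu)$, which is precisely Lemma \ref{BV:Ruelle}. Thus all the work lies in proving the second inequality $h_\mu(g) + \int\phigeo\, d\mu \leq \Phi(\phigeo; g)$. The strategy will be to show that $\mu \notin \mathcal{M}_*$ forces $\mu$-a.e.\ orbit to spend a proportion of time outside a neighborhood of one of the fixed points $q,q'$ that is at most $\gamma(g)$, and then to apply the Katok pressure formula argument of Lemma \ref{keystepexpansivityestimate} together with the pressure estimate of Theorem \ref{coreestimateallscales} and the tail-entropy bound of Lemma \ref{hexp}.

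Since the dominated splitting is $E^{cs} \oplus E^{cu}$ with each factor two-dimensional, the condition $\mu \in \mathcal{M}_*$ is equivalent to $\lambda_2(\mu) < 0 < \lambda_3(\mu)$, so $\mu \notin \mathcal{M}_*$ means either $\lambda_2(\mu) \geq 0$ or $\lambda_3(\mu) \leq 0$. In the first case, Kingman's subadditive ergodic theorem applied to the $Dg$-invariant bundle $E^{cs}$ gives
\[
\lambda_2(\mu) = \lim_{n\to\infty} \tfrac{1}{n} \log \|Dg^n|_{E^{cs}}\| \leq \int \log \|Dg|_{E^{cs}}\|\, d\mu.
\]
Using that $\|Dg|_{E^{cs}(x)}\| \leq \lambda_s(g)$ wherever $\chi(x) = 1$ and $\leq \lambda_{cs}(g)$ elsewhere, the arithmetic underlying the definition of $\gamma(g)$ at \eqref{def:gamma} yields $\int\chi\, d\mu \leq \gamma(g)$. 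The case $\lambda_3(\mu) \leq 0$ is symmetric: Kingman applied to $-\log\|Dg^{-n}|_{E^{cu}}\|$ together with the analogous derivative bounds yields $\int\chi'\, d\mu \leq \gamma(g)$.

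Now fix any $r > \gamma(g)$ and work in the case $\int\chi\, d\mu \leq \gamma < r$, the other being symmetric via $g^{-1}$. By Birkhoff, $\tfrac{1}{n}S_n\chi(x) \to \int\chi\, d\mu < r$ for $\mu$-a.e.\ $x$, so the set $A^+$ appearing in the proof of Lemma \ref{keystepexpansivityestimate} has full $\mu$-measure. That proof yields $h_\mu(g) + \int \phigeo\, d\mu \leq P(\CCC(q, r; g), \phigeo; g)$. Combining Theorem \ref{coreestimateallscales} with Lemma \ref{smallerscales} and the tail-entropy estimate $h_g^*(6\eta) \leq 6\log\lambda$ of Lemma \ref{hexp} bounds this by
\[
6\log\lambda + (1-r)\sup_{B(q,\rho''+\rho)}\phigeo + r(\sup_{\TT^4}\phigeo + h + \log L) + H(2r).
\]
This expression is continuous in $r$, so letting $r \to \gamma^+$ and using $\sup_{B(q,\rho''+\rho)}\phigeo \leq \sup_Q \phigeo$ gives $h_\mu(g) + \int\phigeo\, d\mu \leq \Phi(\phigeo; g) - V \leq \Phi(\phigeo; g)$. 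The main obstacle is the second paragraph's step converting the Lyapunov exponent condition into an integral bound on $\chi$ or $\chi'$; this requires combining Kingman's theorem with the precise derivative bounds that underlie the definition of $\gamma(g)$ at \eqref{def:gamma}, and it is essential that $\gamma(g)$ is tuned exactly so that these bounds are tight.
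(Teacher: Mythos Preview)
Your proof is correct and follows the same overall architecture as the paper: the first inequality is Lemma~\ref{BV:Ruelle}, and for the second you show that $\mu\notin\mathcal{M}_*$ forces $\mu(A^+)=1$ or $\mu(A^-)=1$, then invoke Lemma~\ref{keystepexpansivityestimate} and Theorem~\ref{coreestimateallscales} (with the tail-entropy bound from Lemma~\ref{hexp}) and let $r\downarrow\gamma$.

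The one genuine difference is in how you pass from the Lyapunov-exponent condition to $A^\pm$. The paper argues pointwise by contradiction: for $\mu$-a.e.\ $z$ it exhibits a vector $v\in E^{cs}_z$ with nonnegative asymptotic growth, and observes that if $z\notin A^+$ then along a subsequence $n_k$ with $\tfrac{1}{n_k}S_{n_k}\chi(z)\geq r$ the derivative estimate behind Lemma~\ref{unifexpansion} forces $\|Dg^{n_k}_z v\|\leq\theta_r^{n_k}\to 0$, a contradiction. You instead work on the integral level: subadditivity (Kingman/Furstenberg--Kesten) gives $\lambda_2(\mu)\leq\int\log\|Dg|_{E^{cs}}\|\,d\mu$, and since $\log\|Dg|_{E^{cs}(x)}\|\leq\chi(x)\log\lambda_s+(1-\chi(x))\log\lambda_{cs}$, the hypothesis $\lambda_2(\mu)\geq 0$ yields $\int\chi\,d\mu\leq\gamma$ exactly from the definition of $\gamma$; Birkhoff then gives $\mu(A^+)=1$. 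Your route is slightly cleaner in that it avoids the subsequence contradiction and makes explicit why the constant $\gamma$ is exactly the right threshold; the paper's route is more directly dynamical and reuses the derivative estimate already proved in Lemma~\ref{unifexpansion}. Either way the conclusion and the remaining steps are identical.
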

\begin{proof}
The first inequality follows from Lemma \ref{BV:Ruelle}, so our work is to prove the second.
Suppose that $\mu \in \mathcal{M}_e(g) \setminus \mathcal{M}_*$, and that either $\mu$ belongs to Case (1) and is not hyperbolic, or belongs to Case (2) in the proof of Lemma \ref{BV:Ruelle}. Then there exists a set $Z\subset M$ with $\mu(Z)=1$ so that for each $z \in Z$, there exists $v \in E^{cs}_z$ with
$
\lim_{n\to\infty} \tfrac 1n \log \|Dg^n_z(v)\| \geq 0.
$ 
Thus with $r>\gamma$, we have $z\in A^+$, where as in \eqref{eqn:A+} we put
\begin{equation}\label{eqn:A++}
A^+=\{x : \text{there exists } K(x) \text{ so } \tfrac{1}{n}S^g_{n}\chi(x) < r \text{ for all } n>K(x)\}.
\end{equation}
To see this, suppose that $z \notin A^+$. Then there exists $n_k \to \infty$ with $\frac{1}{n_k}S^g_{n_k}\chi(z) \geq \gamma$. By Lemma \ref{unifexpansion}, this gives 
\[
 \|Dg^{n_k}_z(v)\| \leq \|Dg^{n_k}|_{E^{cs}(z)}\| \leq (\theta_r)^{n_k},
\]
and thus 
$
\lim_{n_k\to\infty} \tfrac 1{n_k} \log \|Dg^{n_k}_z(v)\| \leq \log \theta_r<0,
$
which is a contradiction. Thus, $\mu(A^+) =1$, where $A^+$ is as in \eqref{eqn:A++}.Writing $\CCC = \CCC(q,r;g)$ and $\CCC'=\CCC(q',r;g)$ (where the notation is defined in \eqref{eqn:C}), it follows that
\[
h_\mu(g) -\lambda^+(\mu) \leq h_\mu(g) + \int\phigeo\,d\mu \leq P(\CCC \cup \CCC',\phigeo) \leq \Phi(\phigeo; g),
\]
where the first inequality uses \eqref{eqn:phlambda2}, the second uses Lemma \ref{keystepexpansivityestimate}, and the third uses Theorem \ref{coreestimateallscales}.

Now suppose $\mu$ belongs to case (3) above, and thus there is a non-positive exponent associated to $E^{cu}$. An analogous argument shows that $\mu(A^-)>0$, where 
\[
A^-=\{x : \text{there exists } K(x) \text{ so } \tfrac{1}{n}S^{g^{-1}}_{n}\chi(x) < r \text{ for all } n>K(x)\}.
\]
The key point is that there  exists a set $Z\subset M$ with $\mu(Z)=1$ so that for each $z \in Z$, there exists $v \in E^{cu}_z$ with
\[
\lim_{n\to-\infty} \tfrac 1n \log \|Dg^{-n}_z(v)\| \geq 0.
\] 
It follows that $z\in A^-$,  because otherwise there exists $n_k \to \infty$ with $\frac{1}{n_k}S^{g^{-1}}_{n_k}\chi(z) \geq \gamma$, and thus by lemma \ref{unifexpansion}, we have
\[
 \|Dg^{-n_k}_z(v)\| \leq \|Dg^{-n_k}|_{E^{cs}(z)}\| \leq (\theta_r)^{n_k},
\]
and thus $ \lim_{n_k\to -\infty} \tfrac 1{n_k} \log \|Dg^{-n_k}_z(v)\| \leq \log \theta_r<0$, which is a contradiction. Thus, $\mu(A^-) =1$. Again, it follows that
\[
h_\mu(g) -\lambda^+(\mu) \leq h_\mu(g) + \int\phigeo\,d\mu \leq P(\CCC,\phigeo) \leq \Phi(\phigeo; g).
\]
where the first inequality uses \eqref{eqn:phlambda2}, the second uses Lemma \ref{keystepexpansivityestimate}, and the third uses Theorem \ref{coreestimateallscales}. 
\end{proof}

\subsubsection*{Completing the proof}
It follows from \S \ref{srbnegbadpressure}, Lemma \ref{lem:keySRBestimate2} and Lemma \ref{lem:Pgeq0} that any ergodic $\mu$ not in $\mathcal{M}_*$ satisfies
\[
h_\mu(g) + \int\phigeo\,d\mu \leq \Phi(\phigeo) < 0 \leq P(\phigeo).
\]
Thus, it follows from the variational principle that
\begin{equation}\label{eqn:vp*2}
P(\phigeo) = \sup \left\{h_\mu(g) + \int\phigeo\,d\mu \, :\,  \mu \in \mathcal{M}_* \right\}.
\end{equation}
Now, for every $\mu\in \mathcal{M}_*$, we have $\int\phigeo\,d\mu = -\lambda^+(\mu)$, and thus
\begin{equation}\label{eqn:free-energy2}
h_\mu(g) + \int\phigeo\,d\mu = h_\mu(g) -\lambda^+(\mu)\leq 0.
\end{equation}
It follows that $P(\phigeo) = \sup \left\{h_\mu(g) + \int\phigeo\,d\mu \, :\,  \mu \in \mathcal{M}_* \right\} \leq0$. Hence, $P(\phigeo)=0$. Since $\sup \phigeo< 0$, the function $t\mapsto P(t\phigeo)$ is a convex strictly decreasing function from $\RR\to\RR$, and thus $1$ is the unique root. 

To show that the unique equilibrium state $\mu$ is an SRB measure for $g$, we observe that $\mu\in \mathcal{M}_*$ implies that $\mu$ is hyperbolic, and since $P(\phigeo)=0$, \eqref{eqn:free-energy2} gives $h_\mu(g) - \lambda^+(\mu)=0$, so $\mu$ is an SRB measure.

To see that there is no other SRB measure, we observe that if $\nu\neq \mu$ is any ergodic measure, then $h_\nu(g) - \lambda^+(\nu) \leq h_\nu(g) + \int\phigeo\,d\nu < P(\phigeo)=0$ by \eqref{eqn:phlambda2} and the uniqueness of $\mu$ as an equilibrium measure. This completes the proof of Theorem \ref{main3}.

\section{Proofs of Lemmas}\label{s.lemmas}

\begin{proof} [Proof of Lemma \ref{lem:span-sep}]
It suffices to consider $(n,\delta)$-separated sets of maximum cardinality in the supremum for the partition sum. Otherwise, we could increase the partition sum by adding in another point. An  $(n,\delta)$-separated set of maximum cardinality must be $(n,\delta)$-spanning, or else we could add in another point and still be  $(n,\delta)$-separated.  The first inequality follows.

For the second inequality, let $E_n$ be any $(n,2\delta)$-separated set and $F_n$ any $(n,\delta)$-spanning set. Define the map $\pi\colon E_n\to F_n$ by choosing for each $x\in E_n$ a point $\pi(x)$ with the property that $d(x,\pi(x))\leq \delta$. The map $\pi$ is injective. Thus, for any $E$ which is $(n, 2\delta)$ separated, 
\[
\sum_{y\in F_n} e^{S_n\ph(y)} \geq \sum_{x \in E_n} e^{S_n \ph (\pi(x))} \geq \sum_{x \in E_n} e^{S_n \ph (x) - n \Var(\ph, \delta)},
\]
and thus $\sum_{y\in F_n} e^{S_n\ph(y)} \geq e^{- n \Var(\ph, \delta)}\Lsep_n(\DDD,\ph,2\delta)$.
\end{proof}

\begin{proof}[Proof of Lemma \ref{smallerscales}]
It is shown in \cite[Proposition 2.2]{rB72} that given any $\delta>0$ and $\alpha>h^*_f(\eps)$, there is a constant $K$ such that
\[
\Lspan(B_n(x,\eps),0,\delta;f) \leq K e^{\alpha n}
\]
for every $x\in X$ and $n\in \NN$; that is, every Bowen ball $B_n(x,\eps)$ has an $(n,\delta)$-spanning subset $F_{x,n}$ with cardinality at most $K e^{\alpha n}$.  Let $E_n \subset \DDD_n$ be a maximal $(n, \epsilon)$-separated set. Then 
$G_n = \bigcup_{x\in E_n} F_{x,n}$ is $(n,\delta)$-spanning for $\DDD_n$, and has
\[
\sum_{y\in G_n} e^{S_n\ph(y)}
\leq \sum_{x\in E_n} e^{S_n\ph(x)} e^{n\Var(\ph,\eps)} K e^{\alpha n}.
\]
We conclude that
$
\Lspan_n(\DDD,\ph,\delta) \leq \Lsep_n(\DDD,\ph,\eps) K e^{n(\Var(\ph,\eps) + \alpha)}.
$
Then the second inequality in Lemma \ref{lem:span-sep} gives
\[
\Lsep_n(\DDD,\ph,2\delta) \leq e^{n\Var(\ph,\delta)} \Lsep_n(\DDD,\ph,\eps) K e^{n(\Var(\ph,\eps) + \alpha)};
\]
sending $n\to\infty$ gives the first half of Lemma \ref{smallerscales}, and sending $\delta\to 0$ gives the second half.
\end{proof}


\begin{proof}[Proof of Proposition \ref{pressuredrop}.] 
With $\eta$ and $C$ as in the statement of the lemma, put $\alpha=\eta/C$.  By the Anosov shadowing lemma if $\{x_n\}$ is an $\alpha$-pseudo orbit for $f$, then there exists an $f$-orbit that $\eta$-shadows $\{x_n\}$.

Now fix $g\in \mathrm{Diff}(M)$ with $d_{C^0}(f, g)<\alpha$.  Then every $g$-orbit is an $\alpha$-pseudo orbit for $f$, and hence for every $x\in M$, we can find a unique point $\pi(x)\in M$ such that
\begin{equation}\label{eqn:fhg}
d(f^n(\pi x), g^n x) < \eta \text{ for all }n\in \mathbb{Z}.
\end{equation}
We prove \ref{Pg-geq}. By expansivity of $f$, we have
\begin{equation}\label{eqn:Pf}
P(\ph; f) = \lim_{n\to\infty} \frac 1n \log \Lspan_n(\ph,3\eta;f).
\end{equation}

Let $E_n$ be a $(n,\eta)$-spanning set for $g$.  Then from \eqref{eqn:fhg} we see that $\pi(E_n)$ is $(n,3\eta)$-spanning for $f$. 
It follows that
\begin{equation}\label{eqn:Lf}
\Lspan_n(\ph,2\eta;f) \leq \sum_{x\in \pi(E_n)} e^{S_n^{f}\ph(x)}
= \sum_{x\in E_n} e^{S_n^{f}\ph(\pi x)}.
\end{equation}
Note that  $S_n^f\ph(\pi x)= \sum_{k=0}^{n-1} \ph(f^k(\pi x))
\leq \sum_{k=0}^{n-1} (\ph(g^kx) + \Var (\varphi, \eta))$,
and together with \eqref{eqn:Pf} and \eqref{eqn:Lf} this gives
\[
P(\ph; f) \leq \lim_{n\to\infty} \frac 1n \log \sum_{x\in E_n} e^{n \Var (\varphi, \eta)+ S_n^g\ph(x)}.
\]
Taking an infimum over all $(n,\eta)$-spanning sets for $g$ gives 
\[
P(\ph; f) \leq \Var (\varphi, \eta) + P(\ph,\eta; g)
\]
by the first inequality in Lemma \ref{lem:span-sep}.  This  completes the proof of \ref{Pg-geq} since $P(\ph; g) \geq P(\ph,\eta; g)$.

Now we prove \ref{Lambdag-leq}. Let $E_n$ be a maximal $(n, 3\eta)$ separated set for $g$.  As in the previous argument, we see from \eqref{eqn:fhg} that $\pi(E_n)$ is $(n,\eta)$-separated for $f$:
indeed, for every $x,y\in E_n$ there is $0\leq k<n$ such that $d(g^kx,g^ky) \geq 3\eta$, and hence
\[
d(f^k(\pi x),f^k(\pi y)) \geq d(g^kx,g^ky) - d(g^kx, f^k\pi x) - d(g^ky, f^k\pi y) > \eta.
\]
In particular, we have
\[
\begin{aligned}
\Lsep_n(\ph,\eta; f) &\geq \sum_{x\in \pi(E_n)} e^{S_n^{f}\ph(x)} 
= \sum_{x\in E_n} e^{S_n^{f}\ph(\pi x)}  \\
&\geq \sum_{x\in E_n} e^{S_n^{g}\ph(x)-n\Var(\phi, \eta)} \geq \Lsep_n(\ph, 3\eta; g) e^{-n\Var(\ph, \eta)}. \qedhere
\end{aligned}
\]
\end{proof}



\begin{proof}[Proof of Lemma \ref{lem:Wlps}]
Given $x,y\in F^1 \oplus F^2$, let $z'$ be the unique point of intersection of $(x+F^1) \cap (y+F^2)$.  Translating the coordinate system so that $z'$ becomes the origin, we assume w.l.o.g. that $x\in F^1$ and $y\in F^2$.  Then $W^1(x)$ and $W^2(y)$ are graphs of $C^1$ functions $\phi_1\colon F^1\to F^2$ and $\phi_2\colon F^2\to F^1$ with $\|D\phi_i\| < \beta$.
That is, $W^1(x) = \{a + \phi_1(a) \, :\,  a\in F^1\}$ and $W^2(y) = \{\phi_2(b) +b \, :\,  b\in F^2\}$.  Thus $z\in W^1 \cap W^2$ if and only if $z=a+\phi_1(a) = \phi_2(b)+b$ for some $a\in F^1$ and $b\in F^2$.  This occurs if and only if $b=\phi_1(a)$ and $a=\phi_2(b)$; that is, if and only if $a = \phi_2\circ \phi_1(a)$ and $b=\phi_1(a)$.  Because $\phi_2\circ \phi_1$ is a contraction on the complete metric space $F^1$ it has a unique fixed point $a$.

For the estimate on the distances from $z$ to $x,y$ we observe that
\begin{align*}
\|a\| &= d(a,0) = d(\phi_2b,\phi_2y) \leq \beta d(b,y) \leq \beta (\|b\| + \|y\|), \\
\|b\| &= d(b,0) = d(\phi_1a,\phi_1x) \leq \beta d(a,x) \leq \beta (\|a\| + \|x\|).
\end{align*}
Recall that by the definition of $\bk$ we have $\|x\|,\|y\| \leq \bk\|x-y\|$.  Thus we have
\[
\|a\| \leq \beta(\beta(\|a\| + \|x\|) + \|y\|)
\leq \beta^2\|a\| + \beta(1+\beta)\bk d(x,y),
\]
which gives $\|a\| \leq \frac{\beta}{1-\beta} \bk d(x,y)$, and similarly for $\|b\|$.  Thus
\[
d(a,x) \leq \|a\| + \|x\| \leq \left(\frac{\beta}{1-\beta} + 1\right)\bk d(x,y) = \frac {\bk d(x,y)}{1-\beta}.
\]
To obtain the bound on $d_{W^1}(z,x)$, observe that there is a path $\gamma$ from $a$ to $x$ with length $\leq \frac{\bar\kappa}{1-\beta}d(x,y)$; the image of $\gamma$ under the map $\Id + \phi_1$ connects $z$ to $x$ and has length $\leq \frac{1+\beta}{1-\beta}\bar\kappa d(x,y)$ since $\|\Id+\phi_1\| \leq 1+\beta$.
The other distance bound is similar. \end{proof}

\begin{proof}[Proof of Lemma \ref{compare:dist}] 
Suppose $W=W^1$; the case $W=W^2$ is similar.  Let $y'$ be the intersection point of $y+F_2$ and $x+F_1$.  Then since $x,y$ lie on the same leaf of $W^1$, we must have $y-x \in C_\beta(F^1,F^2)$, and so $\|y-y'\| / \|y'-x\| \leq \beta$.  This gives
\[
\|y-x\| \geq \|y'-x\| - \|y-y'\| \geq \|y'-x\|(1-\beta),
\]
so $\|y'-x\| \leq (1-\beta)^{-1} \|y-x\|$.
Now $W^1(x)$ is the image of $x+F^1$ under a map $G$ with $\|DG\| \leq 1+\beta$, so there is a curve on $W^1(x)$ connecting $x$ and $y$ with length $\leq (1+\beta) \|y' - x\|$.  This completes the proof.
\end{proof}

\begin{proof}[Proof of Lemma \ref{lem:WcuWcs}]
We  use the following general lemma.

\begin{lem}\label{lem:leaves-dense}
Let $W$ be a foliation of a compact manifold $M$ such that $W(x)$ is dense in $M$ for every $x\in M$.  Then for every $\alpha>0$ there is $R>0$ such that $W_R(x)$ is $\alpha$-dense in $M$ for every $x\in M$.
\end{lem}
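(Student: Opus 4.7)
I would argue by contradiction, leveraging compactness of $M$ together with continuous dependence of local plaques on the basepoint. Suppose the conclusion fails for some $\alpha>0$: then there exist $R_n\to\infty$ and $x_n,y_n\in M$ with $d(y_n,W_{R_n}(x_n))\geq\alpha$ for every $n$. By passing to a subsequence, I may assume $x_n\to x$ and $y_n\to y$ in $M$. Since $W(x)$ is dense, I would pick $z\in W(x)$ with $d(z,y)<\alpha/4$, and let $R_0$ be such that $z\in W_{R_0}(x)$.

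The core technical step, which I expect to be the main obstacle, is the following continuity statement for the foliation: for any $x\in M$, $R_0>0$, and $\epsilon>0$, there exists $\delta>0$ such that for every $y\in M$ with $d(x,y)<\delta$ and every $w\in W_{R_0}(x)$, one can find $w'\in W_{R_0+\epsilon}(y)$ with $d(w,w')<\epsilon$. Granting this with $\epsilon=\alpha/4$ and $w=z$, for $n$ sufficiently large one obtains $z_n\in W_{R_0+\alpha/4}(x_n)$ with $d(z_n,z)<\alpha/4$. Once $R_n\geq R_0+\alpha/4$, the triangle inequality gives $d(z_n,y_n)<3\alpha/4<\alpha$, contradicting the choice of $y_n$.

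To prove the continuity statement, I would fix a $C^1$ path $\gamma\colon[0,1]\to W(x)$ from $x$ to $w$ of length at most $R_0$. Its compact image can be covered by finitely many foliation charts $(U_j,\phi_j)$ with $\phi_j\colon U_j\to F_j\times T_j$, in which leaves appear as plaques $\phi_j^{-1}(F_j\times\{t\})$. Refining if necessary, I would choose a partition $0=t_0<\cdots<t_N=1$ so that each segment $\gamma([t_i,t_{i+1}])$ lies in a single chart. For $y$ sufficiently close to $x$, I would inductively build a piecewise path $\gamma_y$ in $W(y)$ by transferring each segment to the corresponding plaque of $W(y)$ within the same chart, passing to the next chart at each partition point. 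Because the chart maps are $C^0$ homeomorphisms with $C^1$ plaques, each transferred segment has length close to that of $\gamma|_{[t_i,t_{i+1}]}$, and its endpoint is close to $\gamma(t_{i+1})$. After $N$ steps the accumulated error in position and in length is at most $\epsilon$, provided $\delta$ was chosen small enough in terms of the finite data of the chart cover.

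The main obstacle is precisely this inductive tracking of $\gamma_y$ across finitely many foliation boxes: a single leaf may wind densely through $M$, so we cannot compare $W(x)$ and $W(y)$ globally, only along the compact path $\gamma$ via local trivializations, and the error must be propagated without blow-up through the chart transitions. Once this foliation-theoretic continuity is in hand, the compactness-plus-contradiction argument is routine.
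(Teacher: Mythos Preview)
Your argument is correct, but the paper takes a shorter route that avoids the explicit chart-by-chart construction. The paper defines $\psi_R(x,y)=\mathrm{dist}(y,W_R(x))$ on $M\times M$, observes that $x\mapsto W_R(x)$ is continuous in the Hausdorff metric (stated without proof), so each $\psi_R$ is continuous; then notes that $\psi_R\downarrow 0$ pointwise as $R\to\infty$ by density of leaves and monotonicity of $W_R(x)$ in $R$. An appeal to Dini's theorem on the compact space $M\times M$ gives uniform convergence, which is exactly the conclusion. Your contradiction argument is essentially an unpacking of Dini's theorem, and your ``core technical step'' is precisely the continuity of $x\mapsto W_R(x)$ that the paper asserts in one line; so the underlying content is the same, but you are doing by hand what the paper delegates to a standard result. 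The advantage of your version is that it makes explicit why the foliation-continuity holds (via finitely many foliation boxes along a compact path), which the paper leaves to the reader; the advantage of the paper's version is brevity and the observation that monotonicity in $R$ lets one bypass any quantitative tracking of errors across charts.
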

\begin{proof}
Given $R>0$, define a function $\psi_R\colon M\times M \to [0,\infty)$ by $\psi_R(x,y) = \mathrm{dist}(y,W_R(x))$.  Note that for each $R$, the map $x\mapsto W_R(x)$ is continuous (in the Hausdorff metric) and hence $\psi_R$ is continuous.  Moreover, since $W(x) = \bigcup_{R>0} W_R(x)$ is dense in $M$ for each $x\in M$, we have $\lim_{R\to\infty} \psi_R(x,y) = 0$ for each $x,y\in M$.  Finally, when $R\geq R'$ we see that $W_R(x) \supset W_{R'}(x)$ and so $\psi_R(x,y) \leq \psi_{R'}(x,y)$.  Thus $\{\psi_R\, :\,  R>0\}$ is a family of continuous functions that converge monotonically to 0 pointwise.  By compactness of $M\times M$, the convergence is uniform, hence for every $\alpha>0$ there is $R$ such that $\psi_R(x,y) <\alpha$ for all $x,y\in M$.
\end{proof}
Now put $\delta=\rho'$.  By the local product structure for $W^{cs},W^u$ we can put $\alpha = \delta/\kappa $ and observe that if $d(y,z)<\alpha$, then $W^u_\delta(z) \cap W^{cs}_\delta(y) \neq \emptyset$.
By Lemma \ref{lem:leaves-dense}, there is $R>0$ such that $W_R^u(x)$ is $\alpha$-dense in $\TT^d$ for every $x\in \TT^d$.  Thus for every $x\in \TT^d$ there is $z\in W_R^u(x)$ such that $d(y,z)<\alpha$, and thus $W^u_\delta(z) \cap W^{cs}_\delta(y) \neq \emptyset$.  The result follows by observing that $W_{R+\delta}^u(x) \supset W_\delta^u(z)$.
\end{proof}

\appendix
\section{The geometric potential has the Bowen property on $\mathcal{G}$}

In this appendix, we give a a direct proof that the geometric potential $\phigeo:= -\log \mathrm{det}(Dg|_{E^{cu}})$ has the Bowen property on $\GGG$ {when $g$ is $C^{1+\alpha}$}. This allows us to treat scalar multiples of the geometric potential using Theorem \ref{t.BV} without relying on the folklore result that a $C^2$ diffeomorphism with a dominated splitting has H\"older continuous distributions.  
One advantage of this approach is that the argument is suitable for generalization to non-uniformly hyperbolic settings, where H\"older continuity may fail. The main idea is Lemma \ref{lem:Grassmann-contract} below, which gives contraction estimates for the action of $Dg$ on the Grassmannian. 

\subsection{Action on the Grassmannian}

The standard approach to the geometric potential in the uniformly hyperbolic case 
is to argue that the unstable distribution is H\"older continuous (i.e. the section $x \mapsto E^{u}(x)$ is H\"older continuous), and thus the map $\phigeo(x)=-\log \mathrm{det}(Dg|_{E^{u}})(x)$ is H\"older. This approach is captured on the following commutative diagram:
\[
\xymatrix{
& G \ar[dr]^{\psi} & \\
M \ar[ur]^{E^{u}} 
\ar[rr]_{\phigeo} && \RR
}
\]
where $G$ is the appropriate Grassmannian bundle over $M$, and $\psi$ sends $E\in G$ to $-\log |\det Dg(x)|_E|$. Note that all we need for $\psi$ to be H\"older continuous is for the map $g$ to be $C^{1+\alpha}$ (see Lemma \ref{lem:Grass-Holder} below). Thus, the question of regularity of $\phigeo$ reduces to the question of regularity for $E^u\colon M \to G$.

In our setting, where $\phigeo(x) =-\log \det(Df|_{E^{cu}(x)})$
we obtain refined estimates on $E^{cu}\colon \TT^4 \to G$ for good orbit segments, which allow us to establish the Bowen property on these segments.

More precisely, we let $G_2$ denote the Grassmannian bundle of $2$-planes in $\RR^4$ over the torus. Since the underlying manifold is the torus, this is a product bundle, and we can identify $G_2$ with $\TT^4 \times \text{Gr}(2, \RR^4)$, where $\text{Gr}(2, \RR^4)$ is the space of planes through the origin in $\RR^4$. The map $g$ induces dynamics on $G_2$ by the formula
\begin{equation}\label{eqn:gDg}
(x, V) \mapsto (g(x), Dg(V)).
\end{equation}
We show here that $\psi$ is H\"older, and in \S\ref{sec:reduction} that it suffices to prove the Bowen property for trajectories that start on the stable manifold of $x$; then in \S\ref{sec:grass-contract} we do this by studying the dynamics of \eqref{eqn:gDg}.

Note that $\text{Gr}(2,\RR^4)$ is equipped with the metric
\[
d_G(E,E') = d_H (E\cap S^3, E'\cap S^3),
\]
where $d_H$ is the Hausdorff metric on compact subsets of the unit sphere $S^3 \subset \RR^4$.  We will use the fact that on small neighborhoods $U\subset \text{Gr}(2,\RR^4)$ one can define a Lipschitz map $U\to \RR^4\times \RR^4$ that assigns to each $E\in U$ an orthonormal basis for $E$.

\begin{lem}\label{lem:Grass-Holder}
If $g\colon \TT^4\to \TT^4$ is $C^{1+\alpha}$, then the map $\psi\colon \TT^4\times \text{Gr}(2,\RR^4) \to \RR$ given by $\psi(x,E) = -\log|\det Dg(x)|_E|$ is H\"older continuous with exponent $\alpha$.
\end{lem}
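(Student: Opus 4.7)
The plan is to reduce the H\"older regularity of $\psi$ to two ingredients that are easy to handle separately: the $\alpha$-H\"older regularity of the map $x \mapsto Dg(x)$ (which follows from $g$ being $C^{1+\alpha}$), and the Lipschitz regularity of local orthonormal frames on the Grassmannian $\mathrm{Gr}(2,\RR^4)$. The key point is that once we express $|\det Dg(x)|_E|$ in terms of $Dg(x)$ and an orthonormal basis $(v_1, v_2)$ of $E$ via the wedge product, everything reduces to a joint Lipschitz estimate on a bounded region.

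First I would observe that, since $g$ is a $C^1$ diffeomorphism of the compact manifold $\TT^4$, the singular values of $Dg(x)$ are bounded above and bounded away from zero uniformly in $x$, so there exist $0<c<C<\infty$ with $c \le |\det Dg(x)|_E| \le C$ for every $(x,E)$. Because $\log$ is Lipschitz on the interval $[c,C]$, it suffices to prove that the map $F\colon (x,E) \mapsto |\det Dg(x)|_E|$ is $\alpha$-H\"older continuous.

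Next, since $\mathrm{Gr}(2,\RR^4)$ is a compact smooth manifold, I would cover it by finitely many open sets $U_1,\dots, U_N$ on each of which there is a smooth (hence Lipschitz) section $E\mapsto (v_1^{(i)}(E), v_2^{(i)}(E))$ assigning to each $E\in U_i$ an orthonormal basis of $E$. On each such $U_i$ we have the identity
\[
F(x,E) = \|Dg(x)\,v_1^{(i)}(E) \wedge Dg(x)\,v_2^{(i)}(E)\|.
\]
The map $(A, v, w) \mapsto \|Av \wedge Aw\|$ from $L(\RR^4,\RR^4)\times \RR^4\times\RR^4 \to \RR$ is a polynomial in its entries composed with the (locally Lipschitz) norm on $\Lambda^2 \RR^4$, so it is Lipschitz on any bounded set. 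Combined with the Lipschitz regularity of $E \mapsto v_j^{(i)}(E)$ and the $\alpha$-H\"older regularity of $x \mapsto Dg(x)$, this yields the desired $\alpha$-H\"older bound on $F$ restricted to $\TT^4 \times U_i$.

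The only point requiring a little care is the passage from the local estimates on $\TT^4\times U_i$ to a global estimate. For $E,E'\in \mathrm{Gr}(2,\RR^4)$ with $d_G(E,E')$ smaller than the Lebesgue number of $\{U_i\}$, both lie in a common $U_i$ and the local estimate applies directly; for $d_G(E,E')$ bounded below, the estimate is trivial since $F$ is bounded. I expect this to be the only mildly finicky step; the rest is a direct bilinearity-and-chain-rule computation.
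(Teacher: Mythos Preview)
Your proof is correct and follows essentially the same approach as the paper: both express $|\det Dg(x)|_E|$ via an orthonormal frame of $E$ and a polynomial area/wedge function, then combine the $\alpha$-H\"older regularity of $x\mapsto Dg(x)$, the local Lipschitz dependence of orthonormal frames on $E$ (which the paper notes just before the lemma), and the Lipschitz continuity of $\log$ on a compact interval bounded away from zero. Your write-up is in fact more explicit than the paper's about the local-to-global patching via a Lebesgue number argument, but the underlying idea is identical.
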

\begin{proof}
Given $v,w\in \RR^4$, the square of the area of the parallelogram spanned by $v,w$ is given by the smooth function
$A(v,w) = \sum_\sigma v_{\sigma(1)} w_{\sigma(2)}$, where the sum is over all 1-1 maps $\sigma\colon \{1,2\}\to \{1,2,3,4\}$.  Given $(x,E) \in \TT^4\times \text{Gr}(2,\RR^4)$, let $v,w$ be an orthonormal basis for $E$, so
\[
\psi(x,E) = -\frac 12\log \left| \frac{A(Dg_x(v), Dg_x(w))}{A(v,w)}\right|.
\]
The function $Dg$ is $\alpha$-H\"older, the function $\log$ is Lipschitz on compact subsets of $(0,\infty)$, and $\|Dg^{\pm 1}\|$ is bounded away from $0$ and $\infty$, and $A$ is smooth, so we conclude that $\psi$ is $\alpha$-H\"older.
\end{proof}

\subsection{Reduction to the centre-stable manifold}\label{sec:reduction}

In this section and the next we prove the following result, which together with Lemma \ref{bowen-ballsBV} and Lemma \ref{lem:Grass-Holder} implies the Bowen property for $\phigeo$ by following the same computation as in Lemma \ref{bowenpropBV}.

\begin{lem}\label{lem:Grassmann-contract}
There are $C\in \RR$ and $\theta<1$ such that for every $(x,n)\in \GGG$, $y\in B_{\sixtythree \rho'}(x,n)$, and $0\leq k\leq n$, we have
\[
d_H(E^{cu}(g^kx), E^{cu}(g^ky)) \leq C(\theta^k + \theta^{n-k}).
\]
\end{lem}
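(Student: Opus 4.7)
The plan is to reduce via local product structure to comparing $E^{cu}$ at pairs of points lying on the same local centre-stable or centre-unstable manifold, and then to control these comparisons using a recursive estimate that exploits the contraction of the fibrewise action of $Dg$ on the Grassmannian toward the section $E^{cu}$, together with H\"older continuity of $x\mapsto Dg_x$. By Lemma~\ref{lem:lps-mane} there is $z \in W^{cs}_{\kappa\sixtythree\rho'}(x) \cap W^{cu}_{\kappa\sixtythree\rho'}(y)$; writing $a_k := d_H(E^{cu}(g^kx), E^{cu}(g^kz))$ and $b_k := d_H(E^{cu}(g^kz), E^{cu}(g^ky))$, the triangle inequality reduces matters to bounding each of $a_k, b_k$. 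Lemma~\ref{unifexpansion} provides the distance decay $d(g^kx, g^kz) \lesssim \theta_r^k$ and $d(g^kz, g^ky) \lesssim \theta_r^{n-k}$ (up to a factor of $\kappa\sixtythree\rho'$).

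The key one-step estimate, using triviality of $T\TT^4$ to identify all fibres with $\mathrm{Gr}(2,\RR^4)$, is: for $p,q$ close enough that $E^{cu}(q)$ lies in a fixed $cu$-cone at $p$,
\[
d_H\bigl(E^{cu}(gp), E^{cu}(gq)\bigr) \leq \mu\, d_H\bigl(E^{cu}(p), E^{cu}(q)\bigr) + K\, d(p,q)^{\alpha}.
\]
The first term on the right bounds $d_H(Dg_p E^{cu}(p), Dg_p E^{cu}(q))$ using that the $Dg_p$-action on the Grassmannian contracts toward $E^{cu}(gp)$ at uniform rate $\mu<1$: writing a plane near $E^{cu}(p)$ in graph form relative to the splitting $E^{cu}(p)\oplus E^{cs}(p)$ as $\mathrm{Graph}(A)$, its image is $\mathrm{Graph}\bigl(Dg|_{E^{cs}}\circ A\circ (Dg|_{E^{cu}})^{-1}\bigr)$, which by the domination condition has norm at most $\mu\|A\|$ with $\mu<1$ (after possibly passing to a fixed iterate $g^{\ell}$). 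The second term bounds $d_H(Dg_p E^{cu}(q), Dg_q E^{cu}(q))$ by $C\|Dg_p - Dg_q\| \leq K d(p,q)^{\alpha}$, using that $g$ is $C^{1+\alpha}$.

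Applied to $(p,q) = (g^kx, g^kz)$ and $(p,q) = (g^kz, g^ky)$, this gives the recursions
\[
a_{k+1} \leq \mu\, a_k + K'\theta_r^{\alpha k}, \qquad b_{k+1} \leq \mu\, b_k + K'\theta_r^{\alpha(n-k)},
\]
valid as long as the distances keep $E^{cu}$ in a fixed $cu$-cone, which follows from continuity of the distribution and smallness of $d(p,q)$. Since $a_0, b_0$ are bounded by the diameter of the $cu$-cone, a standard geometric-series computation with $\theta := \max(\mu, \theta_r^{\alpha}) < 1$ yields $a_k \leq C\theta^k$ and $b_k \leq C(\theta^k + \theta^{n-k})$, so that $d_H(E^{cu}(g^kx), E^{cu}(g^ky)) \leq a_k + b_k \leq C'(\theta^k + \theta^{n-k})$. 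The main technical obstacle is establishing the uniform fibrewise contraction rate $\mu$ along the orbit segment and ensuring $E^{cu}(g^kq)$ stays inside a fixed cone at $g^kp$ throughout the iteration; both follow from the dominated splitting together with continuity of $E^{cu}$ and the uniform closeness provided by the $\GGG$-condition.
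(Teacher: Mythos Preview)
Your approach is essentially the same as the paper's: reduce via local product structure to a point $z$ on $W^{cs}(x)\cap W^{cu}(y)$, then control the Grassmannian distance by a recursion coming from the graph-transform contraction of $Dg$ acting on planes near $E^{cu}$, plus a H\"older error from $\|Dg_p - Dg_q\|$. The paper makes this precise by choosing orthonormal frames along the orbit so that $E^{cs}$ is a fixed subspace $Z\subset\RR^4$, writing subspaces as graphs $L_k^E\colon E_k\to Z$, and obtaining the recursion $D_{k+1}\le \lambda(1+Q\theta^k)D_k + Q\theta^k$; the extra multiplicative factor $(1+Q\theta^k)$ arises because the reference subspace $E_{k+1}=A_k^x E_k$ differs slightly from $A_k^z E_k$, a bookkeeping point your sketch absorbs into the constants.

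One small difference worth noting: for the pair $(z,y)$ on the same $W^{cu}$ leaf, the paper does not run the recursion at all. Since the leaves of $W^{cu}$ are $C^1$, the map $p\mapsto E^{cu}(p)=T_pW^{cu}(p)$ is Lipschitz along each leaf, giving directly $d_H(E^{cu}(g^kz),E^{cu}(g^ky))\le C\,d(g^kz,g^ky)\le C\theta_r^{n-k}$. Your recursion for $b_k$ also works, but is more than is needed here. The substantive work, in both your sketch and the paper, is the $W^{cs}$-leaf case (your $a_k$), and there the arguments coincide.
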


Note that here we identify both $E^{cu}(g^kx)$ and $E^{cu}(g^ky)$ with subspaces of $\RR^4$, and Lemma \ref{lem:Grassmann-contract} gives a bound on the distance between these subspaces; the corresponding bound on the distance between $g^kx$ and $g^ky$ was already proved in Lemma \ref{bowen-ballsBV}.

The first step in the proof of Lemma \ref{lem:Grassmann-contract} is exactly as in Lemma \ref{bowen-ballsBV}: 
Using the local product structure at scale $\sixtythree\rho'$,  there exists $z\in W^{cs}_{\kappa \sixtythree\rho'}(x) \cap W^{cu}_{\kappa \sixtythree\rho'}(y) = W^{cs}_{\rho''}(x) \cap W^{cu}_{\rho''}(y) $.  Because the leaves of the foliation $W^{cu}$ are $C^1$, there is a constant $C$ such that
\[
d_H(E^{cu}(g^kz),E^{cu}(g^ky)) \leq C d(g^kz,g^ky)
\leq C (\kappa\sixtythree\rho') \theta_r^{n-k},
\]
using the fact that $z\in W^{cu}_{\rho''}(y)$.  Thus in order to prove Lemma \ref{lem:Grassmann-contract}, it suffices to show that
\begin{equation}\label{eqn:same-stable}
d_H(E^{cu}(g^kx),E^{cu}(g^kz)) \leq C \theta^k
\end{equation}
whenever $z\in W_{\rho''}^{cs}(x)$, which we do in the next section.

\subsection{Unstable directions approach each other}\label{sec:grass-contract}

We fix $(x,n)\in \GGG$.  Given $z\in W_{\rho''}^{cs}(x)$ and $0\leq k\leq n$, let $(e_{z,k}^i)_{i=1}^4$ be an orthonormal basis for $T_{g^kz}\TT^4$ such that $E^{cs}(g^k z) = \text{span}(e_{z,k}^1,e_{z,k}^2)$.  Let $\pi_{z,k}\colon T_{g^kz}\TT^4 \to \RR^4$ be the linear map that takes $v\in T_{g^kz}\TT^4$ to its coordinate representation in the basis $e_{z,k}^i$.  
We can choose the vectors $e_{z,k}^i$ in such a way that for every $k,i$, the map $z\mapsto e_{z,k}^i$ is $K$-Lipschitz on $g^k(W_{\rho''}^{cs}(x))$, where $K$ is a constant that does not depend on $(x,n)$.

Now let $A_k^z\colon \RR^4\to\RR^4$ be the coordinate representation of $Dg_{g^kz}$ in the bases chosen above.  That is, $A_k^z$ makes the following diagram commute.
\[
\xymatrixcolsep{4pc}\xymatrix{
T_{g^k z} \TT^4 \ar[r]^{Dg_{g^kz}} \ar[d]^{\pi_{z,k}} & T_{g^{k+1} z}\TT^4 \ar[d]^{\pi_{z,k+1}} \\
\RR^4 \ar[r]^{A_{k}^z} & \RR^4
}
\]
To prove \eqref{eqn:same-stable}, it suffices to consider $\hat E_k^z := \pi_{z,k} E^{cu}(g^kz)$ and show that
\begin{equation}\label{eqn:hat-E}
d_H(\hat E_k^x, \hat E_k^z) \leq C \theta^k,
\end{equation}
since $\pi_{z,k}^{-1}$ is $K$-Lipschitz in $z$ for each $k$.  Since $\hat E_{k+1}^z = A_k^z \hat E_k^z$ and $\hat E_{k+1}^x = A_k^x \hat E_k^x$, we must study the dynamics of $A_k^z$ and $A_k^x$.

Let $Z = \RR^2\times\{0\} \subset \RR^4$ and note that $Z = \pi_{z,k}E^{cs}(g^kz)$ for every $z,k$.  In particular, this means that $A_k^z(Z)=Z$.

Let $\mathcal{E}$ be the collection of all subspaces $E\subset \RR^4$ such that $\RR^4 = Z\oplus E$.  Given $0\leq k\leq n$, let $E_k = \hat E_k^x$, and for each $E\in \mathcal{E}$, let $L_k^E\colon E_k\to Z$ be the linear map whose graph is $E$.

\begin{lem}\label{lem:angle-norm}
Given any $0\leq k\leq n$ and $E\in \mathcal{E}$, we have
\[
\sin(d_G(E,E_k)) \leq \|L_k^E\|.
\]
\end{lem}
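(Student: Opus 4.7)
The plan is to reduce the Hausdorff distance estimate to an operator-norm bound on an orthogonal projection, and then exploit the graph structure of $E$.

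First, I would use the standard fact that for two $2$-planes in $\RR^4$, the Hausdorff distance between unit spheres satisfies $d_G(E,E_k) = 2\sin(\theta_{\max}/2)$, where $\theta_{\max}\in[0,\pi/2]$ is the largest principal angle between the planes, and moreover $\sin(\theta_{\max})$ equals the operator norm of orthogonal projection onto $E_k^\perp$ restricted to $E$. Since $2\sin(x/2)\leq x$ and both $d_G(E,E_k)$ and $\theta_{\max}$ lie in $[0,\pi/2]$, monotonicity of $\sin$ on this interval yields $\sin(d_G(E,E_k)) \leq \sin(\theta_{\max})$. It therefore suffices to show that the operator norm of orthogonal projection from $E$ onto $E_k^\perp$ is at most $\|L_k^E\|$.

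To bound this operator norm, for each unit $v\in E_k$ I would set $u_v = v + L_k^E v \in E$ and write $L_k^E v = p + q$ with $p\in E_k$ and $q\in E_k^\perp$ the orthogonal decomposition. Since $v\in E_k$, the orthogonal projection of $u_v$ onto $E_k^\perp$ is exactly $q$, and the Pythagorean theorem gives $\|u_v\|^2 = \|v+p\|^2 + \|q\|^2$. The desired bound $\|q\|/\|u_v\| \leq \|L_k^E\|$ is equivalent, after rearrangement, to $\|q\|^2(1-\|L_k^E\|^2) \leq \|L_k^E\|^2\|v+p\|^2$; taking the supremum over unit $v$ then gives the operator-norm estimate.

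The main obstacle I anticipate is verifying this inequality cleanly. When $\|L_k^E\|\geq 1$, the conclusion $\sin(d_G(E,E_k)) \leq 1 \leq \|L_k^E\|$ is trivial, so I may assume $\|L_k^E\|<1$. Combining $\|p\|^2 + \|q\|^2 = \|L_k^E v\|^2 \leq \|L_k^E\|^2$ with the reverse triangle inequality $\|v+p\| \geq 1 - \|p\|$, I would reduce the target inequality to $(\|L_k^E\|^2 - \|p\|^2)(1 - \|L_k^E\|^2) \leq \|L_k^E\|^2(1 - \|p\|)^2$. Expanding both sides shows that their difference equals $-(\|L_k^E\|^2 - \|p\|)^2$, which is nonpositive; this identity is the core computation and closes the argument once the supremum over unit $v\in E_k$ is taken.
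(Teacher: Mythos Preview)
Your argument is correct, but it takes a considerably longer route than the paper's. The paper avoids principal angles and orthogonal projections entirely: it observes that $d_G(E,E_k) \leq \sup_{v\in E_k}\theta(v)$, where $\theta(v)$ is the angle between $v$ and $v+L_k^E v$, and then applies the law of sines to the triangle with vertices $0$, $v$, $v+L_k^E v$. The side opposite $\theta(v)$ has length $\|L_k^E v\|$, so $\sin\theta(v)/\|L_k^E v\| = \sin\beta/\|v\| \leq 1/\|v\|$, giving $\sin\theta(v) \leq \|L_k^E v\|/\|v\| \leq \|L_k^E\|$ in one line. Your final algebraic identity $(\|L_k^E\|^2 - \|p\|^2)(1-\|L_k^E\|^2) \leq \|L_k^E\|^2(1-\|p\|)^2$ is essentially reproving this triangle inequality by hand. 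Your approach does buy something: it connects the estimate to the standard characterization $\sin\theta_{\max} = \|P_{E_k^\perp}|_E\|$, which is conceptually clean. On the other hand, the paper's argument never needs to decompose $L_k^E v$ orthogonally with respect to $E_k$ (recall $L_k^E$ maps into $Z$, not $E_k^\perp$), so it uses strictly less structure.
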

\begin{proof}
Given $v\in E_k$, let $\theta=\theta(v)$ be the angle between $v$ and $v+L_k^Ev\in E$.  By the definition of $d_G$, we have $d_G(E,E_k) \leq \sup_v \theta(v)$, so it suffices to show that $\sin\theta \leq \|L_k^E\|$ for all $v$.  Consider the triangle with vertices at $0$, $v$, and $v+L_k^Ev$.  The side opposite $\theta$ has length $\|L_k^Ev\|$, and the side from $0$ to $v$ has length $\|v\|$.  Writing $\beta$ for the angle opposite this side, the law of sines gives
\[
\frac{\sin\theta}{\|L_k^Ev\|} = \frac{\sin\beta}{\|v\|} \leq \frac 1{\|v\|}.
\]
Multiplying both sides by $\|L_k^Ev\|$ gives the result.
\end{proof}

\begin{lem}\label{lem:E-contraction}
Given $0\leq k\leq n$, an invertible linear map $A\colon \RR^4\to \RR^4$ that preserves $Z$, and a subspace $E\in \mathcal{E}$, let $P_0\colon E_{k+1} \to AE_k$ be projection along $Z$.  Then
\begin{equation}\label{eqn:LAE}
L_{k+1}^{AE} + \Id = (A|_Z \circ L_k^E \circ A|_{E_k}^{-1} + \Id)\circ P_0.
\end{equation}
In particular, we have
\begin{equation}\label{eqn:LAE-norm}
\|L_{k+1}^{AE}\| \leq \|A|_Z\| \cdot \|A|_{E_k}^{-1}\| \cdot \|P_0\| \cdot \|L_k^E\| + \|P_0 - \Id\|.
\end{equation}
\end{lem}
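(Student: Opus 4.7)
The plan is to verify \eqref{eqn:LAE} as a direct algebraic identity using the graph characterization of $L_{k+1}^{AE}$, and then read off \eqref{eqn:LAE-norm} by taking operator norms. The key observation is that because $A$ preserves $Z$ and is invertible, it carries the decomposition $\RR^4 = Z \oplus E_k$ to the decomposition $\RR^4 = Z \oplus AE_k$, so the projection $P_0 \colon E_{k+1} \to AE_k$ along $Z$ is well-defined, and for any $w \in E_{k+1}$ we have $w - P_0 w \in Z$.

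First I would compute an explicit parametrization of $AE$. Since $E$ is the graph of $L_k^E$ over $E_k$, every element of $E$ has the form $v + L_k^E v$ with $v \in E_k$. Applying $A$ and using that $A$ preserves $Z$, every element of $AE$ has the form
\[
Av + A|_Z L_k^E v, \qquad v \in E_k,
\]
with the first summand in $AE_k$ and the second in $Z$. Reparametrizing by $w_0 = Av \in AE_k$ so that $v = A|_{E_k}^{-1} w_0$, we obtain
\[
AE = \{\, w_0 + A|_Z L_k^E A|_{E_k}^{-1} w_0 \, : \, w_0 \in AE_k \,\}.
\]

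Next I would use the defining property of $L_{k+1}^{AE}$: for each $w \in E_{k+1}$, the vector $w + L_{k+1}^{AE} w$ is the unique element of $AE$ whose projection onto $E_{k+1}$ along $Z$ equals $w$. Setting $w_0 := P_0 w$, we have $w - w_0 \in Z$, and $w_0 + A|_Z L_k^E A|_{E_k}^{-1} w_0 \in AE$ has $E_{k+1}$-component equal to $w$ (since its $Z$-component absorbs $w - w_0$ together with $A|_Z L_k^E A|_{E_k}^{-1} w_0$). By uniqueness,
\[
w + L_{k+1}^{AE} w = \bigl(A|_Z \circ L_k^E \circ A|_{E_k}^{-1} + \Id\bigr)(P_0 w),
\]
which is precisely \eqref{eqn:LAE}.

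For \eqref{eqn:LAE-norm}, subtract $w = \Id(w)$ from both sides to get
\[
L_{k+1}^{AE} w = A|_Z L_k^E A|_{E_k}^{-1} P_0 w + (P_0 - \Id)(w),
\]
and then apply submultiplicativity of the operator norm to each term. The first term contributes $\|A|_Z\| \cdot \|L_k^E\| \cdot \|A|_{E_k}^{-1}\| \cdot \|P_0\| \cdot \|w\|$ and the second contributes $\|P_0 - \Id\| \cdot \|w\|$, which yields the stated bound after dividing by $\|w\|$. The only place that needs mild care is tracking the domains and codomains of the partial maps $A|_Z$, $A|_{E_k}^{-1}$, $L_k^E$ so that the composition makes sense, but there is no genuine obstacle here — everything is linear algebra driven by the invariance $A(Z) = Z$.
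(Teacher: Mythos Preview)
Your proof is correct and follows essentially the same approach as the paper: both are direct linear-algebra computations using the graph representation of subspaces and the invariance $A(Z)=Z$. The only organizational difference is that you first parametrize $AE$ over $AE_k$ and then match the $E_{k+1}$-component, whereas the paper starts from $v + L_{k+1}^{AE}v \in AE$, pulls back by $A^{-1}$ to $E$, and applies the definition of $L_k^E$; these are two write-ups of the same identity, and the norm bound is derived identically.
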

\begin{proof}
Given $v\in E_{k+1}$, let $v_0 = P_0v\in AE_k$.  Then we have
\begin{align*}
v_0 &\in A E_k \quad\Rightarrow  & A^{-1}v_0 &\in  E_k,\\
L_{k+1}^{AE}v + v - v_0 &\in Z
\qquad \Rightarrow & A^{-1}(L_{k+1}^{AE}v + v - v_0) &\in Z, \\
v_0 + (L_{k+1}^{AE}v + v - v_0) &\in AE
\quad\Rightarrow &A^{-1}v_0 + A^{-1}(L_{k+1}^{AE}v + v - v_0) &\in E,
\end{align*}
where the implication in the second row uses invariance of $Z$.
By the definition of $L_k^E$, this implies that
\[
A^{-1}(L_{k+1}^{AE}v + v - v_0) = L_k^A A^{-1} v_0.
\]
Since $v_0 \in AE_k$, we can write $A^{-1} v_0 = A|_{E_k}^{-1} v_0$, and since $L_k^A A|_{E_k}^{-1} v_0 \in Z$ we can apply $A|_Z$ to both sides and write
\[
L_{k+1}^{AE} v + v - P_0 v = A|_Z L_k^A A|_{E_k}^{-1} P_0 v.
\]
Adding $P_0v$ to both sides gives \eqref{eqn:LAE}.
\end{proof}

In particular, when $A = A_k^z$ for $z\in W_{\rho''}^{cs}(x)$, we can use the estimate $d(g^k x, g^kz) \leq \rho'' \theta_r^k$ together with H\"older continuity of $Dg$ and Lipschitz continuity of $e_{z,k}^i$ to deduce that
\[
\|A_k^z - A_k^x\| \leq C(\rho'')^{\alpha} \theta_r^{\alpha k},
\]
and hence
\[
d_G(E_{k+1}, A_k^z E_k)
= d_G(A_k^x E_k, A_k^z E_k) \leq C' (\rho'')^{\alpha}\theta_r^{\alpha k}.
\]
Since $\measuredangle(Z,E_k)$ is bounded away from 0, this implies that the map $P_0$ in Lemma \ref{lem:E-contraction} satisfies $\|P_0 - \Id\| \leq C''(\rho'')^{\alpha} \theta_r^{\alpha k}$ when the Lemma is applied with $A = A_k^z$.  We conclude that
\begin{equation}\label{eqn:hat-E-contract}
\|L_{k+1}^{\hat E_{k+1}^z} \|
\leq \|A_k^z|_Z\| \cdot \|A_k^z|_{E_k}^{-1}\| (1+C''(\rho'')^{\alpha}\theta_r^{\alpha k})
\|L_k^{\hat E_k^z}\| + C''(\rho'')^\alpha \theta_r^{\alpha k}.
\end{equation}
Because the splitting is dominated and the cones are small, there is $\lambda<1$ such that $\|A_k^z|_Z\| \cdot \|A_k^z|_{E_k}^{-1}\| \leq \lambda$ for all choices of $x,z,n,k$, and thus writing $D_k = \|L_k^{\hat E_k^z}\|$, we get $d_G(\hat E_k^x, \hat E_k^z) \leq D_k$ from Lemma \ref{lem:angle-norm}, and \eqref{eqn:hat-E-contract} gives
\begin{equation}\label{eqn:Dk}
D_{k+1} \leq \lambda(1+Q\theta^k)D_k + Q\theta^k,
\end{equation}
where $\theta = \theta_r^\alpha$.  Iterating \eqref{eqn:Dk} shows that $D_k$ decays exponentially; indeed, fixing $\nu < 1$ such that $\lambda,\theta < \nu$ and writing $C_k = D_k \nu^{-k}$, we have
\[
C_{k+1} \leq \frac \lambda\nu (1+Q\theta^k) C_k + Q\frac{\theta^k}{\nu^{k+1}},
\]
and by taking $k_0$ large enough that $\xi := \frac \lambda\nu(1+Q\theta^k) < 1$, this gives
\[
C_{k+1} \leq \xi C_k + Q\nu^{-1}(\theta/\nu)^{k_0}
\]
for all $k\geq k_0$, so that in particular if $C_k \leq \bar C := Q\nu^{-1}(\theta/\nu)^{k_0}(1-\xi)^{-1}$, then
\[
C_{k+1} \leq \frac \xi{1-\xi}Q\nu^{-1}(\theta/\nu)^{k_0} + Q\nu^{-1}(\theta/\nu)^{k_0} = \bar{C}.
\]
Taking $C' = \max \{ C_k : 0\leq k\leq k_0\}$ and $C'' = \max(C,C')$, we get
$
D_k \leq C'' \nu^k
$
for all $k$.  Since $C''$ does not depend on $x,z,n,k$, this completes the proof of Lemma \ref{lem:Grassmann-contract}.
Combining Lemmas \ref{lem:Grass-Holder} and \ref{lem:Grassmann-contract} gives the Bowen property for $\phigeo$ on $\GGG$, just as in Lemma \ref{bowenpropBV}.

\subsection*{Acknowledgments} We would like to thank Keith Burns, Martin Sambarino, Tianyu Wang, Amie Wilkinson, and the anonymous referees for numerous helpful comments. We also thank the American Institute of Mathematics, where some of this work was completed as part of a SQuaRE.

\bibliographystyle{plain}
\bibliography{CFT-references}

\end{document}